\newcommand{\aaf}{\mathbb A_F}
\newcommand{\aae}{\mathbb A_E}
\newcommand{\res}{\mathrm{Res}_{E/F}}
\newcommand{\sll}{{\mathrm {SL}}}
\newcommand{\gll}{{\mathrm {GL}}}
\newcommand{\zz}{{\mathbb Z}}
\newcommand{\rr}{{\mathbb R}}
\newcommand{\cc}{{\mathbb C}}
\newcommand{\bu}{{\mathrm U}}
\newcommand{\uu}{{\mathrm U (1, 1)}}
\newcommand{\uuu}{{\mathrm U (3)}}
\newcommand{\uuuu}{{\mathrm U (2, 2)}}
\newcommand{\guu}{{\mathrm {GU} (1, 1)}}
\newcommand{\guuu}{{\mathrm {GU} (3)}}
\newcommand{\guuuu}{{\mathrm {GU} (2, 2)}}
\newcommand{\go}{\mathrm {GO} (4, 2)}
\newcommand{\gso}{\mathrm {GSO} (4, 2)}
\newcommand{\ep}{\epsilon_{E/F}}
\newcommand{\inj}{\hookrightarrow}
\newcommand{\surj}{\twoheadrightarrow}
\newcommand{\map}{\rightarrow}
\newcommand{\bij}{\xrightarrow{\sim}}
\newcommand{\ind}{\mathrm{Ind}}
\newcommand{\baz}{^\times}
\theoremstyle{definition}
\newtheorem{theo}{Theorem}[section]
\newtheorem{defi}[theo]{Definition}
\newtheorem{prop}[theo]{Proposition}
\newtheorem{lemm}[theo]{Lemma}
\newtheorem{cor}[theo]{Corollary}
\newtheorem{rem}[theo]{Remark}
\newtheorem{rmk}[theo]{Remark}
\newtheorem*{q}{\underline {Questions}}
\begin{document}

\title{On Miyawaki lifts with respect to Hermitian Maass lifts}

\author{Nozomi Ito}
\address{Department of Mathematics,
Kyoto University,
Kitashirakawa Oiwake-cho, Sakyo-ku,
Kyoto 606-8502, Japan}
\email{nozomi@math.kyoto-u.ac.jp}
\keywords{Automorphic representation; Arthur classification; Ikeda lift; Miyawaki lift}
\maketitle

\begin{abstract}
In this paper, we discuss the representation-theoretical Miyawaki lift for unitary groups in terms of the endoscopic classification. We give an explicit determination of Miyawaki lifts for $\bu(1)$ and $\bu(3)$ with respect to Hermitian Maass lifts, namely Ikeda lifts for $\bu(4)$.
\end{abstract}

\setcounter{tocdepth}{1}
\tableofcontents
\section{Introduction}
In \cite{ikeda2006pullback}, Ikeda gave a lifting of Siegel modular forms using diagonal restrictions of Ikeda lifts \cite{ikeike} as kernel functions and described the L-functions of the liftings under the assumption that they do not vanish.
The lifting is called the Miyawaki lift.
Today, there exist some analogues and generalizations of the Miyawaki lift \cite{atobe2018miyawaki}, \cite{kim2018miyawaki}, \cite{attarticle}.

The main reason why the Miyawaki lift works is that almost all local components of automorphic representations generated by Ikeda lifts are (quotients of) degenerate principle series.
Since diagonal restrictions of degenerate principle series have simple branching laws on unramified representations (see \cite[Proposition 3.1]{ikeda2006pullback} or \cite[Lemma 2.2]{gt1}), we can determine the near equivalence classes of Miyawaki lifts.

Based on above, we can redefine the Miyawaki lift representation-theoretically and relatively axiomatically by using the endoscopic classification (see \cite{arthur2011endoscopic}, \cite{mok2015endoscopic}, \cite{kltarticle}).
In this paper, we treat this in the case of unitary groups mainly (see \S 3 for more details).

Let $F$ be a number field and $E$ a quadratic extension of $F$.
Let $V=V_1\perp V_2$ be a $2n$-dimensional nondegenerate Hermitian space over $E$ which is decomposed into an $m$-dimensional nondegenerate subspace $V_1$ and a $(2n-m)$-dimensional nondegenerate subspace $V_2$, where $m\leq n$.
Then there is a natural injection
$$i:\bu(V_1)\times\bu(V_2)\inj\bu(V),$$
where $\bu(V')$ is the isometry group of an Hermitian space $V'$.
Let $\Pi$ and $\pi$ be discrete automorphic representations of $\bu(V)(\aaf)$ and $\bu(V_1)(\aaf)$, respectively, where $\aaf$ is the adele ring of $F$.
We assume that the A-parameter of $\Pi$ is equal to 
$$\psi:=\phi_2[n]=\phi_2\boxtimes[n],$$
where $\phi_2$ is a generic A-parameter of $\gll_2(\aae)$ with some conditions (see \S 3) and $[n]$ is the $n$-dimensional irreducible algebraic representation of $\sll_2(\cc)$.
By this assumption, almost all local components of $\Pi$ are degenerate principle series.
In addition, we assume that 
$$\mathcal M_\psi(\psi'):=\psi'^\vee\boxplus\phi_2[n-m]$$
is a discrete A-parameter of $\bu(V_2)$, where $\psi'$ is the A-parameter of $\pi$.
For $\mathcal F \in \Pi$ and $f\in\pi$, we define $\mathcal M_{\mathcal F}(f)$ by
$$\mathcal M_{\mathcal F}(f)(g_2) :=\langle \mathcal F\circ i(\cdot,g_2), f\rangle_{\bu (V_1)},\ g_2\in \bu(V_2)(\aaf)$$
as long as it converges, where $\langle\cdot,\cdot\rangle_{\bu (V_1)}$ is the Peterson inner product on $\bu (V_1)(\aaf)$.
If $\mathcal M_{\mathcal F}(f)$  converges for all $\mathcal F\in\Pi$ and $f\in \pi$, we define $\mathcal M_{\Pi}(\pi)$ as the representation of $\bu(V_2)(\aaf)$ generated by all $\mathcal M_{\mathcal F}(f)$.
Then, thanks to the above assumptions, $\mathcal M_{\Pi}(\pi)$ is discrete and all irreducible constituent of $\mathcal M_{\Pi}(\pi)$ have the same A-parameter $\mathcal M_\psi(\psi')$ if $\mathcal M_{\Pi}(\pi)$ is square integrable.

Following \cite{ikeda2006pullback}, we call $\mathcal M_{\Pi}(\pi)$ the Miyawaki lift of $\pi$ with respect to $\Pi$.
Our interest is to determine $\mathcal M_{\Pi}(\pi)$, namely to answer the following three questions:
\begin{q}
\begin{enumerate}
\item When is $\mathcal M_{\Pi}(\pi)$ square integrable?
\item When is $\mathcal M_{\Pi}(\pi)$ nonzero?
\item What are irreducible constituents of $\mathcal M_{\Pi}(\pi)$ (in the sense of the endoscopic classification)?
\end{enumerate}
\end{q}

 Our main result is to answer the above questions in a low rank case.
\begin{theo}[Theorem \ref{T:main}]
If $V=\mathbb H_E^2$ is the direct sum of two hyperbolic plane and $m=1$, then $\mathcal M_{\Pi}(\pi)$ is zero or irreducible cuspidal automorphic representations
.
Moreover, the equivalence classes of $\mathcal M_{\Pi}(\pi)$ can be determined explicitly.
\end{theo}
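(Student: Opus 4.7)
My plan is to answer the three questions (square integrability, irreducibility, and explicit non-vanishing) by combining Mok's endoscopic classification for $\bu(3)$ with the seesaw attached to $\bu(1)\times\bu(3)\inj\bu(2,2)$.

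First I would study the candidate target A-parameter
$$\mathcal M_\psi(\psi')=\psi'^\vee\boxplus\phi_2.$$
Since $\pi$ is an automorphic character of $\bu(1)(\aaf)$, the parameter $\psi'$ is a character of $\iie$, and $\phi_2$ is generic by the standing assumption; hence $\mathcal M_\psi(\psi')$ is a \emph{tempered} discrete A-parameter of $\bu(3)$, with no non-trivial $\sll_2(\cc)$-factor. Consequently every member of the associated global A-packet is cuspidal, so from the discussion at the end of \S1 we deduce that if $\mathcal M_\Pi(\pi)\neq 0$ it is automatically square integrable and in fact cuspidal, settling question~(1).

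Next, for irreducibility and local identification I would proceed place by place. The near-equivalence class of $\mathcal M_\Pi(\pi)$ sits inside the global A-packet of $\mathcal M_\psi(\psi')$, which decomposes as a restricted product of local L-packets. Because almost all local components of $\Pi$ are (quotients of) degenerate principal series, the branching law \cite[Prop.~3.1]{ikeda2006pullback} selects exactly one member of each unramified local L-packet. At the remaining finitely many archimedean and ramified places I would carry out the analogous Jacquet module / Howe duality computation; at this low rank, the relevant induced representations and their constituents are small enough to list explicitly. Once every local component is pinned down, multiplicity one for tempered discrete A-packets of $\bu(3)$ (Mok) forces $\mathcal M_\Pi(\pi)$ to be either zero or the unique irreducible cuspidal representation whose local factors are the chosen L-packet elements.

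Finally, non-vanishing: one must verify that the collection of local L-packet elements selected above defines a character of the component group $S_{\mathcal M_\psi(\psi')}$ that matches the sign character $\epsilon_\psi$ of the Arthur multiplicity formula. I would compute this component group explicitly (it is an elementary abelian $2$-group here), compare both characters, and in the matching case prove actual non-vanishing by a seesaw with the theta correspondence for a dual pair inside $\goa$, reducing to the non-vanishing of an explicit central $L$-value attached to $\phi_2$ twisted by a character built from $\psi'$. \emph{The main obstacle is this last step}: the classification alone pins down the only possible irreducible candidate, but ruling out accidental vanishing requires an honest period/$L$-value computation rather than a purely formal argument.
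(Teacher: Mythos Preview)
Your overall strategy—pin down the target packet via the endoscopic classification, then invoke a seesaw for nonvanishing—is in the right spirit, but you skip the load-bearing technical step. The paper first realizes the Hermitian Maass lift $I_2(\tau)$ itself as a theta lift from $\gll^+_{2,\aaf}$ to $\goa$ through the accidental isomorphism ${\rm PGU}(2,2)\simeq{\rm PGSO}(4,2)$ (Theorem~\ref{T: her}). Only with that description in hand does the seesaw $(\go,\gll_2^+)\leftrightarrow(\guuu,\guu^+)$ yield the clean equivalence $\sigma\subset\mathcal M_{I_2(\tau)}(\gamma)\Longleftrightarrow\theta^{\chi\check\gamma^{-1}}(\sigma\otimes\gamma\circ\det)=\tau$ (Proposition~\ref{long}), which converts the whole question into the almost-equal-rank theta correspondence $\bu(2)\leftrightarrow\bu(3)$, where the local answers are already in the literature (Gan--Ichino, Paul, M{\'\i}nguez). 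Your proposed branching-law and Jacquet-module computations at the bad places would have to replace all of this, and you do not indicate how to carry them out; at the archimedean and ramified primes that is a substantial undertaking, not a bookkeeping exercise.

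There is a genuine gap in your square-integrability argument. Temperedness of $\phi\boxplus\check\gamma^{-1}$ only tells you that any \emph{discrete} automorphic representation with that near-equivalence class is cuspidal; it does not show that the integral $\mathcal M_{\mathcal F}(f)$ lands in $L^2$ in the first place (Proposition~\ref{P:ee} takes this as a hypothesis, not a conclusion). The paper proves it directly: using the theta description of $\mathcal F$, one computes the constant terms along each maximal parabolic of $\uuuu$ via the tower property (Lemma~\ref{l:tow}) and shows that they are either bounded or annihilated by the $\gamma$-average over $\bu(1)$. Without the theta realization of $I_2(\tau)$ you have no handle on these constant terms.

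Finally, the nonvanishing step is easier than you fear, and different from what you wrote. After the seesaw one needs nonvanishing of a global theta lift to $\uu$, and by Yamana's criterion (Proposition~\ref{P:thet}) this follows from local nonvanishing together with a \emph{pole} of the doubling $L$-function at the edge; that pole is furnished unconditionally by the $\zeta_E$ factor, since $\phi\neq\check\gamma\boxplus\check\gamma^{-1}$. No central $L$-value has to be shown nonzero.
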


This paper is organized as follows. In \S \ref{S:ec}, we review the endoscopic classification for quasisplit unitary groups.  In \S \ref{S:im}, we reformulate the Ikeda lift and the Miyawaki lift in sense of the endoscopic classification. In \S \ref{S:hm}, we show that Hermitian Maass lifts, namely Ikeda lifts on $\bu(\mathbb H_E^2)(\aaf)$,  can be described by theta lifts. 
In \S \ref{S:mt}, we prove the main theorem.
\subsection*{Acknowledgment}
The author would like to thank my adviser Prof. Atsushi Ichino.
Thanks to his helpful advice, I could complete this work.
\subsection*{Notation}
Throughout the paper we denote by $F$ a global or local field with characteristic 0 and by $E$ a quadratic extension (resp. etale quadratic algebra) of $F$ if $F$ is a global (resp. local) field.
We denote by $c$ the nontrivial $F$-automorphism of $E$.
We fix a tracezero element $\kappa$ of $E^\times$ and let $d=\kappa^2\in F^\times$.

We suppose that $F$ be a local field.
Then, we denote by $|\cdot|_{F}$ the normalized absolute value on $F\baz$ and we define $|\cdot|_{E}$ by $|z|_{E}=|zz^c|_{F}$ ($z\in E\baz$).

We suppose that $F$ is a global field.
Then, We denote by $F_v$ the completion of $F$ with respect to a place $v$ of $F$ and by $E_v$ the tensor product $F_v\otimes_F E$.
We denote the rings of adeles of $F$ and $E$ by $\aaf$ and $\aae$, respectively.
Furthermore, we denote the idele norm $\otimes_v|\cdot|_{F_v}$ on $\aaf\baz$ by $|\cdot|_{\aaf}$ and we define $|\cdot|_{\aae}$ by $|z|_{\aae}=|zz^c|_{\aaf}$ ($z\in \aae\baz$).

Let $G$ be an algebraic group over $F$.
If $F$ is a local field, we often identify an algebraic group $G$ over $F$ with the group of $F$-rational points $G(F)$ of $G$.
If $F$ is a global field and $v$ is a place of $F$, we define a group $G_v$ over $F_v$ by $G_v(R)=G(R)$ for any $F_v$-algebra $R$.

We denote by $\psi_F$ a fixed nontrivial additive character of $\aaf/F$ (resp. $F$) if $F$ is a global (resp. local) field. 

For $n\in\zz_{>0}$, let 
$$J_n=\begin{pmatrix}
 &&1 \\
 &\iddots& \\
 1&&
\end{pmatrix}\in\gll_n(\zz).$$
We define the quasisplit (similitude) unitary group by
\begin{align*}
\mathrm {GU}(n)=\mathrm {GU}_{E/F}(n)&=
\{g \in \res\gll_{n} \ | \ {}^t\! g^cJ_{n}g=\nu_{n}(g)J_{n}, \nu_{n}(g)\in \gll_1 \},\\
\mathrm {U}(n)=\mathrm {U}_{E/F}(n)&=\mathrm {ker}(\nu_{n}).
\end{align*}
For convenience, let $\mathrm {GU}(0)=\gll_1, \nu_0={\rm id}_{\mathrm {GU}(0)},$ and $\mathrm {U}(0)=\mathrm {ker}(\nu_{0})=\{1\}.$
We often denote $\nu_{n}$ by $\nu$ for short.
We denote by $B'_{n}$ the Borel subgroup of $\mathrm{GU}(n)$ which consists of upper triangular matrices and let $B'_{n}=M'_{n}N_{n}$ be its Levi decomposition.
For $\mathrm{U}(n)$, let $B_{n}=B'_{n}\cap\mathrm{U}(n)=M_{n}N_{n}$, similarly. 

%

When $F$ is a local field, we denote by $\tau_1\times\tau_2\times\cdots\times\tau_{k}\rtimes\pi_0$ a normalized parabolic induction
$${\rm Ind}_{P'}^{\mathrm {GU}(n)}\tau_1\boxtimes\tau_2\boxtimes\cdots\boxtimes\tau_k\boxtimes\pi_0,$$
where
\begin{itemize}
\item $l_1, \dots ,l_k$ are positive integers such that $n_0=n-2\sum_i l_i\geq 0$,
\item $P'$ is the parabolic subgroup of $\bu(n)$ containing $B'_{n}$ whose Levi subgroup is isomorphic to $\prod_i \res\gll_{l_i}\times \mathrm {GU}(n_0)$.
\item $\tau_1,\dots,\tau_k,$ and $\pi_0$ are representations of $\res\gll_{l_1},\dots,\res\gll_{l_k},$ and $\mathrm{GU}(n_0)$, respectively,
\item $\tau_1\boxtimes\tau_2\boxtimes\cdots\boxtimes\tau_k\boxtimes\pi_0$ is regarded as a representation of $P'$ through the map
\begin{alignat*}{3}
P'\quad\quad\quad\quad\quad\quad&\surj \prod_i \res\gll_{l_i}\times \mathrm {GU}(n_0)\\
\begin{pmatrix}
g_1&*&*&*&*&*&*\\
&\ddots&*&*&*&*&*\\
&&g_k&*&*&*&*\\
&&&g_0&*&*&*\\
&&&&*&*&*\\
&&&&&\ddots&*\\
&&&&&&*
\end{pmatrix}
&\mapsto(g_1,\dots,g_k,g_0)
\end{alignat*}
(if $n_0=0,$ interpret $g_0$ appropriately).
\end{itemize}
For $\mathrm {U}(n)$, we define $\tau_1\times\tau_2\times\cdots\times\tau_{k}\rtimes\pi_0$ similarly.

\section{Endoscopic classification for quasisplit unitary groups}\label{S:ec}
The endoscopic classification is a classifying method for automorphic representations of some classical groups established by Arthur \cite{arthur2011endoscopic}.
The theory gives a multiplicity formula of automorphic discrete representations using global A-parameters and characters of some (component) groups which depend on the localizations of A-parameters.

In this section, we summarize some properties of the endoscopic classification for quasisplit unitary groups proved by Mok \cite{mok2015endoscopic}.
\subsection{Local theory}
Firstly we make a survey of the local endoscopic classification. Let $F$ be a local field of characteristic 0.
\subsubsection{Local parameters}
Let $W_{F}$ be the Weil group of $F$ and let
\begin{align*}
L_{F}=
\begin{cases}
W_{F}\times\sll_2(\cc) \ &\mbox{if $F$ is nonarchimedean};\\
W_{F} \ &\mbox{if $F$ is archimedean}.
\end{cases}
\end{align*}
We denote by $\Phi(\bu(n))$ the set of conjugacy classes of L-parameters i.e. $W_F$-homomorphisms
$$\phi:L_F\map \!^L\bu(n)= \widehat{\bu(n)}\rtimes W_F$$
where 
\begin{itemize}
\item $\phi|_{W_F}$ is smooth,
\item for any continuous representation 
$$r:\!^L\bu(n)\map \gll_M(\cc)$$
which is algebraic on $\widehat{\bu(n)}$, $r\circ\phi(w,1)$ is a semisimple element of $\gll_M(\cc)$ for any $w\in W_F$,
\item if $F$ is nonarchimedean, then the homomorphism
$$\phi|_{\sll_2(\cc)}: \sll_2(\cc)\map \widehat{\bu(n)}$$ 
is algebraic.
\end{itemize}
Here, $\widehat{\bu(n)}\simeq\gll_n(\cc)$ is the Langlands dual group of ${\bu(n)}$.
We denote by $\Phi_{\rm bdd}(\bu(n))$ the subset of $\Phi(\bu(n))$ which consists of parameters $\phi\in\Phi(\bu(n))$ such that
the image of $\phi(W_F)$ in $\widehat{\bu(n)}$ is bounded. 

Let $\psi:L_F\times\sll_2(\cc)\map\!^L\bu(n)$ be a homomorphism.
We say that $\psi$ is an A-parameter of $\bu(n)$ if $\psi|_{L_F}\in\Phi(\bu(n))$ and $\psi|_{\sll_2(\cc)}$ is algebraic.
We denote by $\Psi^+(\bu(n))$ the set of conjugacy classes of A-parameters of $\bu(n)$ and by $\Psi(\bu(n))$ the subset of $\Psi^+(\bu(n))$ consisting $\psi\in\Psi^+(\bu(n))$ such that $\psi|_{L_F}\in\Phi_{\rm bdd}(\bu(n))$.
We identify $\psi \in\Psi^+(\bu(n))$ such that $\psi|_{\sll_2(\cc)}=1$ with $\psi|_{L_F} \in\Phi(\bu(n))$.
We regard $\Phi(\bu(n))$ (resp. $\Phi_{\rm bdd}(\bu(n))$) is a subset of $\Psi^+(\bu(n))$ (resp. $\Psi(\bu(n))$) by this identification.
\subsubsection{Conjugate selfdual parameters}
Let $E/F$ be a quadratic field extension.
We define $W_E$ and $L_E$ similarly to $W_F$ and $L_F$.
We fix $w_c\in W_F\setminus W_E$.
Let $\psi'$ be an $N$-dimensional semisimple complex representations of $L_{E}\times\sll_2(\cc)$ which satisfies that $\psi'|_{W_E}$ is smooth and the restriction of $\psi'$ to each $\sll_2(\cc)$-factor is algebraic.
We define $(\psi')^{c}$ by $(\psi')^{c}(w)=\psi'(w_cww_c^{-1})$ for $w\in L_{E}\times\sll_2(\cc)$.
For $b=\pm1$, we say that $\psi'$ is conjugate selfdual with parity $b$ if there is a nondegenerate bilinear form $B: \cc^n\times\cc^n\map\cc$ which satisfies that
\begin{align*} B(\psi'(w)x,(\psi')^{c}(w)y)&=B(x,y),\\
B(y,x)&=bB(x,\psi'(w_c^2)y)
\end{align*}
for any $x, y\in\cc^n$ and $w\in L_{E}\times\sll_2(\cc)$.

Since the action of $W_E$ on $\widehat{\bu(n)}$ is trivial, we regard $\psi|_{L_E\times\sll_2(\cc)}$ as an $n$-dimensional complex  representation of $L_E\times\sll_2(\cc)$ for $\psi\in \Psi^+(\bu(n))$.
Then the map $\psi\mapsto\psi|_{L_E\times\sll_2(\cc)}$ defines a bijection between conjugacy classes of A-parameters of $\bu(n)$ and conjugacy classes of conjugate selfdual $n$-dimensional representations of $L_E\times\sll_2(\cc)$ with parity $(-1)^{n-1}$.
Thus we often identify these two forms and use each of them depending on the context.
 
\subsubsection{Component groups}
For $\psi \in \Psi^+(\bu(n))$, let $S_{\psi}$ be the set of elements of $\widehat{\bu(n)}$ which commute with each element of the image of $\psi$.
We put $\mathcal S_{\psi}=S_{\psi}/S_{\psi}^0$ and $\mathcal {\overline S}_{\psi}=S_{\psi}/\{\pm 1_n S_{\psi}^0\}$, where $S_{\psi}^0$ is the connected component of $S_{\psi}$.
We note that if $E=F\times F$, then $\mathcal  S_{\psi}$
is trivial. If not, we can decompose $\psi$ as a direct sum
$$\psi=\left(\bigoplus_{i=1}^lm_i\psi_i\right)\oplus(\xi\oplus\xi^*),$$
where
\begin{itemize}
\item $\psi_i$ is a conjugate selfdual irreducible representation with parity $(-1)^{n-1}$,
\item if $\psi_i=\psi_j$ then $i=j$,
\item $m_i\in\zz_{>0}$ is the multiplicity of $\psi_i$ in $\psi$,
\item $\xi$ and $\ \xi^*=(\xi^c)^\vee$ are representations of $L_E\times\sll_2(\cc)$ which do not contain any conjugate selfdual irreducible subrepresentations with parity $(-1)^{n-1}$. 
\end{itemize}

Then, we have
$$\mathcal S_{\psi}\simeq(\zz/2\zz)^l.$$
 and 
\begin{align*}
\overline {\mathcal  S}_{\psi}\simeq
\begin{cases}
(\zz/2\zz)^l \ &\mbox{if every $m_i$ are even,}\\
(\zz/2\zz)^{l-1} &\mbox{otherwise.}
\end{cases}
\end{align*}
\subsubsection{Associated L-parameters}
Let $F'=E$ (resp. $F'=F$) if $E$ is a field (resp. $E=F\times F$). Then, for an $n$-dimensional representation $\psi$ of $L_{F'}\times\sll_2(\cc)$, we define an associated L-parameter $\phi_{\psi}$ of $\psi$ by $$\phi_{\psi}(w)=\psi(w,\mathrm {diag}(|w|^{\frac{1}{2}}_{F'},|w|^{-\frac{1}{2}}_{F'})),$$
where we regard $|\cdot|_{F'}$ as a character of $L_{F'}$ by $L_{F'}\surj W_{F'}\surj W_{F'}^{\rm ab}\simeq F'^\times$.
\subsubsection{The local endoscopic classification}
We fix a Whittaker datum  $\mathfrak w=(B,\lambda)$ of $\bu(n)$, where $B$ is a Borel subgroup of $\bu(n)$ and $\lambda$ is a nondegenerate character of the unipotent radical $N$ of $B$. 
The local endoscopic classification says that each $\psi \in \Psi(\bu(n))$ associates a finite multi-set $\Pi_{\psi}$ of irreducible unitary representations of $\bu(n)$ and there exists a map 
$$J_{\psi}=J_{\psi,\mathfrak w}: \Pi_{\psi} \map \hat{\overline {\mathcal  S}}_{\psi}=\{\eta\in\hat{{\mathcal  S}}_{\psi} \ | \ \eta(-1_n S_\psi^0)=1 \}$$
which depends on $\mathfrak w$.
We make a few remarks: 
\begin{itemize}
\item If $E=F\times F$, then $\Pi_{\psi}=\{\tau_{\phi_{\psi}}\}$, where $\tau_{\phi_{\psi}}$ is the irreducible representation of $\gll_{n}(F)$ which corresponds to $\phi_{\psi}$ by LLC (local Langlands correspondence) for general linear groups.
\item When $E$ is a field, there is a unique Whittaker datum of $\bu(n)$ if $n$ is odd, and there are exactly two Whittaker data of $\bu(n)$ if $n$ is even.
Thus $J_\psi$ is canonical if $n$ is odd, and there are at most two $J_\psi$ if $n$ is even.
\end{itemize}
We extend the definition of $\Pi_{\psi}$ to elements of $\Psi^{+}(\bu(n))$. 
Let $\psi \in \Psi^{+}(\bu(n))$.
When $E$ is a field, we can decompose $\psi$ as
$$\psi=\xi_1|\cdot|^{r_1}_{E}\oplus\xi_2|\cdot|^{r_2}_{E}\oplus\cdots\oplus\xi_k|\cdot|^{r_k}_{E}\oplus\psi_0\oplus\xi_k^*|\cdot|^{-r_k}_{E}\oplus\cdots \oplus\xi_1^*|\cdot|^{-r_1}_{E}$$
with 
\begin{itemize}
\item real numbers $r_1,\dots, r_k$ such that $r_1\geq r_2\geq\cdots\geq r_k>0$,
\item integers $l_1,\dots,l_k$, $n_0$ such that $n=n_0+2\sum_i l_i$,
\item $\psi_0\in\Psi(\bu(n_0))$, and
\item $l_i$-dimensional irreducible representations $\xi_1,\dots,\xi_k$ such that $\xi_i(W_{E})$ are bounded.
\end{itemize}
Then, we define a multi-set $\Pi_{\psi}$ of representations (there is possibility that they are nonunitary or reducible) by
\begin{align*}\Pi_{\psi}=\{\tau_{\phi_{\xi_1}}|\cdot|^{r_1}_E\times\tau_{\phi_{\xi_2}}|\cdot|^{r_2}_E\times\cdots\times\tau_{\phi_{\xi_k}}|\cdot|^{r_k}_E\rtimes\pi_0 \ | \ \pi_0\in \Pi_{\psi_0} \}\label{eq:a},\end{align*}
where $\tau_{\phi_{\xi_i}}$ is the irreducible representation of $\gll_{l_i}(E)$ which corresponds to $\phi_{\xi_i}$ by LLC for general linear groups.
When $E=F\times F$, we can decompose $\psi$ as
$$\psi=\xi_1|\cdot|^{r_1}_{F}\oplus\xi_2|\cdot|^{r_2}_{F}\oplus\cdots\oplus\xi_k|\cdot|^{r_k}_{F}$$
with 
\begin{itemize}
\item real numbers $r_1,\dots,r_k$ such that $r_1\geq r_2\geq\cdots\geq r_k$
\item integers $l_1,\dots,l_k$ such that $n=\sum_i l_i$, and
\item $l_i$-dimensional irreducible representations $\xi_1,\dots,\xi_k$ which satisfy that $\xi_i(W_{F})$ is bounded.
\end{itemize}
Then, we define $\Pi_{\psi}$ by
\begin{align*}\Pi_{\psi}=\{ \ind_{P(F)}^{\gll_n(F)}\tau_{\phi_{\xi_1}}|\cdot|^{r_1}_F\boxtimes\tau_{\phi_{\xi_2}}|\cdot|^{r_2}_F\boxtimes\cdots\boxtimes\tau_{\phi_{\xi_k}}|\cdot|^{r_k}_F\}\end{align*}
for some parabolic subgroup $P$ of $\bu(n)=\gll_n$.
Since $\mathcal  S_{\psi}\simeq \mathcal S_{\psi_0}$ naturally in both cases, $J_{\psi}$ is defined in an obvious way.
We remark that if $\phi\in \Phi(\bu(n))$, then each element of $\Pi_{\phi}$ has a unique irreducible quotient.
Moreover, the set $\Pi'_\phi$ of the irreducible quotients of elements of $\Pi_\phi$ is equal to the L-packet which corresponds to $\phi$.

For $\eta\in  \hat{\overline {\mathcal  S}}_{\psi}$, we denote by $\pi(\psi,\eta)$ the direct sum of all elements of $\Pi_\psi$ which are mapped to $\eta$, in all cases.

\subsubsection{Properties of the local endoscopic classification}\label{endo-prop}
We highlight some properties of the local endoscopic classification. Let $E$ be a field. 

\begin{enumerate}
\renewcommand{\labelenumi}{(\alph{enumi})}
\item Let $\phi\in\Phi_{\rm bdd}(\bu(n))$. Then $J_\phi$ is injective and each element of $\Pi_{\phi}$ is an irreducible tempered representations of $\bu(n)(F)$.
Moreover, $J_\phi$ is bijective if $F$ is nonarchimedean.
\item When we denote by ${\rm Irr}(\bu(n))$ (resp. ${\rm Irr_{temp}}(\bu(n))$ the set of equivalence class of irreducible representations (resp. the set of irreducible tempered representations) of $\bu(n)(F)$, then
$${\rm Irr}(\bu(n))=\bigsqcup_{\phi\in\Phi(\bu(n))}\Pi'_{\phi}$$and
$${\rm Irr_{temp}}(\bu(n))=\bigsqcup_{\phi\in\Phi_{\rm bdd}(\bu(n))}\Pi_{\phi}.$$
Moreover, when $n=1$, the bijection between ${\rm Irr}(\bu(1))$ and $\Phi(\bu(1))$ is the following map
\begin{align*}
{\rm Irr}(\bu(1))&\leftrightarrow\Phi(\bu(1)),\\
\gamma&\mapsto\check\gamma,
\end{align*}
where $\check\gamma$ is defined by $\check\gamma(\alpha)=\gamma(\alpha/\alpha^c)$ for $\alpha\in E\baz\simeq W_E^{\rm ab}.$
\item Let $\psi\in\Psi^+(\bu(n))$. Then, the all elements of $\Pi_{\psi}$ have the same central character $\omega_\psi$, which corresponds to $\det\circ\psi$.
\item (\cite[Proposition 8.4.1]{mok2015endoscopic}) Let $\psi\in\Psi(\bu(n))$.
Then, $\Pi'_{\phi_\psi}$ is a subset of the set of multiplicity-free elements of $\Pi_\psi$.
Moreover, the following diagram
 $$
  \xymatrix{
    \Pi_{\phi_\psi} \ar[r]^{J_{\phi_\psi}} \ar@{^{(}->}[d] & \hat{ \overline{\mathcal  S}}_{\phi_\psi} \ar@{^{(}->}[d] \\
    \Pi_{\psi} \ar[r]^{J_{\psi}} & \hat{ \overline{\mathcal  S}}_{\psi}
  }
$$
commutes, where $\hat{ \overline{\mathcal  S}}_{\phi_\psi}\map\hat{ \overline{\mathcal  S}}_{\psi}$ is the map induced by the inclusion $S_{\psi}\map S_{\phi_\psi}$ and $\Pi_{\phi_\psi}\map \Pi_{\psi}$ is the inclusion map.
Especially, $\pi(\psi,1)\neq0$.
\item (\cite[Lemma 8.2.2]{mok2015endoscopic}) Let $F$ be a nonarchimedean field.
Let $\psi\in\Psi(\bu(n))$ be trivial on $\{1_{W_F}\}\times\sll_2(\cc)\times\{1_{\sll_2(\cc)}\}$.
For this $\psi$, we define $\hat\phi\in \Phi_{\rm bdd}(\bu(n))$ by $\hat\phi(w,g)=\psi(w,1,g)$ for $(w,g)\in L_{E}$.
Then, the Aubert involution induces a bijection between $\Pi_{\hat\phi}$ and $\Pi_{\psi}$ where the following diagram
   $$
  \xymatrix{
    \Pi_{\hat\phi} \ar[r]^{J_{\hat\phi}} \ar@{<->}[d] & \hat{\overline {\mathcal  S}}_{\hat\phi} \ar@{<->}[d] \\
    \Pi_{\psi} \ar[r]^{J_{\psi}} & \hat{\overline {\mathcal  S}}_{\psi},
  }
$$
commutes, where $\hat{\overline {\mathcal  S}}_{\hat\phi}\leftrightarrow\hat{\overline {\mathcal  S}}_{\psi}$ is the natural isomorphism.
Especially, $J_\psi$ is bijective. 
\item (\cite[Theorem 2.5.1]{mok2015endoscopic}) Let $F$ be a nonarchimedean field and let $\bu(n)$ be unramified.
We denote the inertia group of $W_{E}$ by $I_{E}$.
Then, for $\psi\in\Psi(\bu(n))$ which is trivial on $I_{E}\times\sll_2(\cc)\times\{1_{\sll_2(\cc)}\}$, the representation $J_{\psi}^{-1}(1)$ in $\Pi_{\phi_\psi}$ is the unique unramified representation of $\Pi_\psi$.
\item (See \cite{atobe2017uniqueness}, for example.) Let $\phi\in\Phi(\bu(n))$. If $ \pi\in\Pi'_{\phi}$ is $\mathfrak w$-generic i.e. ${\rm Hom}_{N}(\pi,\lambda)\neq0$, then $\pi$ corresponds to the trivial character.
Moreover, if $\phi\in\Phi_{\rm bdd}(\bu(n))$ then $\Pi_{\phi}$ contains the unique $\mathfrak w$-generic element.
\end{enumerate}
\subsection{Global theory}
Secondly we make a survey of the global endoscopic classification. Let $F$ be a number field.
\subsubsection{Global parameters}
We think about a formal commutative sum as follows:
$$\psi=\boxplus_{i=1}^l k_i\mu_{i}\boxtimes[m_i].$$
Here, 
\begin{itemize}
\item $k_i,n_i,m_i\in\zz_{>0}$ are positive integers which satisfy $\sum_i k_in_im_i=n$,
\item $\mu_{i}$ is an irreducible unitary cuspidal representation of $\gll_{n_i}(\aae)$,
\item $[m_i]$ is the irreducible $m_i$-dimensional algebraic representation of $\sll_2(\cc)$, and
\item $\mu_{i}\boxtimes[m_i]$ (we often suppress $\boxtimes$ and write $\mu_{i}[m_i]$ for short) is a formal tensor product of $\mu_{i}$ of $\gll_{n_i}(\aae)$ and $[m_i]$ which satisfy that if $\mu_i=\mu_j$ and $m_i=m_j$, then $i=j$.
\end{itemize}
We say that $\psi$ is a global discrete A-parameter of $\bu(n)$ if 
\begin{itemize}
\item $k_i=1$ for all $i$ (discreteness),
\item the Asai L-function $\mathrm L(s, \mu_{i}, \mathrm {As}^{(-1)^{m_i+n}})$ (see \cite[\S 7]{gan2012symplectic}) has a simple pole at $s=1$.
\end{itemize}
We denote by $\Psi_2(\bu(n))$ the set of global discrete A-parameters of $\bu(n)$.

For $\psi\in\Psi_2(\bu(n))$, we formally define $S_\psi=O(1,\cc)^l\simeq(\zz/2\zz)^{l}$.
Furthermore, we put $\mathcal S_{\psi}\simeq(\zz/2\zz)^{l}$ and $\mathcal{\overline S}_{\psi}\simeq(\zz/2\zz)^{l-1}$ similar to the local case.
\subsubsection{Localization}\label{loc}
For above $\psi$, we can define the localization $\psi_v$ of $\psi$ at a place $v$ of $F$.
Namely, if $\psi=\boxplus_{i=1}^l\mu_{i}\boxtimes[m_i]$ then 
$$\psi_v=\oplus_{i=1}^l\mu_{i,v}\boxtimes[m_i],$$
where $\mu_{i,v}$ are representations of $L_{E_v}$ correspond to the $v$-th component of $\mu_{i}$ by LLC for general linear groups.
If $\psi\in\Psi_2(\bu(n))$, then $\psi_v\in \Psi^{+}(\bu(n)_v)$.
Moreover, there is a natural homomorphism
$$S_\psi\map S_{\psi_v}$$
and it induces homomorphisms
$${\mathcal S}_\psi\map{\mathcal S}_{\psi_v}
$$
and
$$
\overline{\mathcal S}_\psi\map\overline{\mathcal S}_{\psi_v}
.$$
\subsubsection{The global endoscopic classification}\label{mf}
We fix a global Whittaker datum  $\mathfrak w=(B,\lambda)$ of $\bu(n)$.
Here, $B$ is a Borel subgroup of $\bu(n)$ and $\lambda=\otimes_v\lambda_v$ is a nondegenerate character of $N(\aaf)$ which is trivial on $N(F)$, where $N$ is the unipotent radical of $B$.
We denote by $\mathfrak w_v=(B_{v}, \lambda_v)$ the localization of $\mathfrak w$ at $v$.
For $\psi\in\Psi_2(\bu(n))$, we define a multiset $\Pi_\psi=\otimes_v'\Pi_{\psi_v}$ by
$$\Pi_\psi=\otimes_v'\Pi_{\psi_v}:=\{\otimes_v'\pi_v\ | \pi_v\in\Pi_{\psi_v}, J_{\psi_v}(\pi_v)=1 \ \mbox{for almost all } v\}$$
and for $\eta\in \hat{\overline {\mathcal  S}}_{\psi}=\{\eta\in\hat{{\mathcal  S}}_{\psi} \ | \ \eta(-1,\cdots,-1)=1 \}$, we define
$$\Pi_\psi(\eta)=\{\otimes_v'\pi_v\in\Pi_\psi  \ | \otimes_v J_{\psi_v}(\pi_v)_{\overline{\mathcal S}_\psi}=\eta 
\},$$
where $J_{\psi_v}=J_{\psi_v,\mathfrak w_v}$ and $J_{\psi_v}(\pi_v)_{\overline{\mathcal S}_\psi}$ is defined as the composition of $\overline{\mathcal S}_\psi\map\overline{\mathcal S}_{\psi_v}$ and $J_{\psi_v}(\pi_v)$.
Then, the endoscopic classification claims that the discrete spectrum $L^2_{\rm disc}(\bu(n)(F)\backslash \bu(n)(\aaf))$ of $L^2(\bu(n)(F)\backslash \bu(n)(\aaf))$ is decomposed as
$$L^2_{\rm disc}(\bu(n)(F)\backslash \bu(n)(\aaf))=\bigoplus_{\psi \in \Psi_2(\bu(n))}L^2_{\rm \psi},$$
where
$$L^2_{\rm \psi}=\bigoplus_{\pi\in\Pi_\psi(\epsilon_\psi)}\pi$$
for each $\psi\in \Psi_2(\bu(n))$ and $\epsilon_\psi$ is the character of $\overline{\mathcal S}_{\psi}$ which is defined by a root number (see \cite[p.47]{arthur2011endoscopic} or \cite[p.34]{mok2015endoscopic}).

By (e) in \S \ref{endo-prop}, there is a natural injection from the set of near equivalence classes of irreducible discrete automorphic representations of $\bu(n)(\aaf)$ to $\Psi_2(\bu(n))$.
\begin{rmk}
Today the above results are generalized to nonquasisplit unitary groups by Kaletha, Minguez, Shin and White \cite{kltarticle}. 
\end{rmk}

\section{Ikeda lifts and Miyawaki lifts}\label{S:im}
The Ikeda lift and the Miyawaki lift are kinds of liftings both established by Ikeda.
The origin of the Ikeda lift is \cite{ikeike}, in which Ikeda gave a construction of Siegel modular forms from elliptic modular forms.
Furthermore, he gave Hermitian analogue in \cite{ikeda2008lifting}.
After that, he gave the representation-theoretical Ikeda lift in \cite{ikeda2017lifting} with Yamana.
In addition, Kim and Yamauchi gave the $E_7$ analogue in \cite{kim2016cusp}.

The origin of the Miyawaki lift is \cite{ikeda2006pullback}, in which Ikeda gave a construction of Siegel modular from another Siegel modular forms by using diagonal restrictions of Ikeda lifts as kernel functions.
There are also some analogues of the result for other groups: the unitary group analogue by Atobe and Kojima \cite{atobe2018miyawaki}, and  the GSpin$(2,10)$ analogue by Kim and Yamauchi \cite{kim2018miyawaki}.
Recently, Atobe constructed and studied the representation-theoretical Miyawaki lift for symplectic/metaplectic groups \cite{attarticle}.

In this section, we redefine the Ikeda lift and the Miyawaki lift for unitary groups in terms of the endoscopic classification. 

For convenience, we put 
$$J'_{2n}=\begin{pmatrix}
 &J_n \\
 -J_n&
\end{pmatrix}$$
 and let
\begin{align*}
\mathrm {GU}(n,n)&=\{g \in \res\gll_{2n} \ | \ {}^t\! g^cJ'_{2n}g=\nu_{2n}(g)J'_{2n}, \nu_{2n}(g)\in \gll_1 \},\\
\mathrm {U}(n,n)&=\mathrm {ker}(\nu_{2n}).
\end{align*}
We identify ${\rm GU}(2n)$ with ${\rm GU}(n,n)$ by the following isomorphism
\begin{align*}
\mathrm {GU}(2n)&\bij\mathrm {GU}(n,n)\\
g&\mapsto (\begin{smallmatrix}
 1_n&\\
 &\kappa 1_n
\end{smallmatrix})
g
(\begin{smallmatrix}
 1_n&\\
 &\kappa 1_n
\end{smallmatrix})^{-1}.
\end{align*}
We identify ${\rm U}(2n)$ with ${\rm U}(n,n)$ similarly.

In this section, we assume that $F$ is always a number field.
We denote by $\ep=\otimes_v\epsilon_{E_v/F_v}$ the character of $\aaf^\times/F^\times$ which corresponds to the extension $E/F$ by the class field theory.
\subsection{Ikeda lifts}

We fix $n\in\zz_{\geq1}$.
Let $\pi$ be an irreducible unitary cuspidal automorphic representation of ${\gll_{2}}(\aaf)$ with central character $\ep^{n-1}$.
We denote by ${\phi}={\phi_{\pi}}$ the (weak) base change lift of $\pi$ to $\gll_2(\aae)$.
Since ${\phi}$ has trivial central character, so it is selfdual.
We fix an automorphic character $\chi$ of $\aae^{\times}$ such that $\chi|_{\aaf^{\times}}=\ep^{n-1}$.
Then we can regard the representation $\pi\boxtimes\chi$ of $\gll_2(\aaf)\times\aae\baz$ as an irreducible cuspidal automorphic representation of $\guu(\aaf)$ via the following map
\begin{align*}(\gll_2\times \res \gll_1)/\{(a {\bf 1}_2, a^{-1})\ |\ a\in\gll_1\}&\simeq\guu ,\\
(g,z)&\map gz.
\end{align*}
Then, the following map
 \begin{align*}
 \pi\boxtimes\chi &\map L^2(\uu(F)\backslash\uu(\aaf)) \\
 f &\mapsto f|_{\uu(\aaf)}
\end{align*}
is obviously nonzero.
Moreover, it is easy to verify that all irreducible subrepresentation of the image of this map are nearly equivalent and the standard base change of them is equal to the  automorphic representation 
$$\phi_{\chi}=\phi_{\pi,\chi}:=\phi\otimes(\chi\circ\det)$$ 
of $\gll_2(\aae)$.
Especially, $\phi_{\chi}$ is a discrete A-parameter of $\uu$.
Thus we obtain that
$$\psi=\psi_{\pi,n}:={\phi}[n]$$
 is a discrete A-parameter of $\bu(n,n)$.

Since we can easily to verify $S_{\phi_\chi} \simeq S_\psi$, $S_{(\phi_\chi)_v} \simeq S_{\psi_v}$ for any $v$ and $\epsilon_{\phi_\chi}=\epsilon_\psi=1$, we can redefine Ikeda lifts as follows.
\begin{defi}
For an irreducible subrepresentation $\tau=\otimes_v\pi((\phi_\chi)_v,\eta_v)$ of $L^2_{\phi_\chi}$, we  put
$$I_{n}(\tau)=I_{n}(\tau,\chi):=\otimes_v\pi(\psi,\eta_v).$$
We call this automorphic representation of $\bu(n,n)(\aaf)$ the Ikeda lift of $\tau$ (with respect to the fixed Whittaker datum).
\end{defi}

\subsection{Types of $\phi$ and $\phi_v$}\label{im:type}
For convenience, we classify the parameter $\phi$. In the following table, $p(\mu)=b$ means that $\mu$ is conjugate selfdual with parity $b$.
\renewcommand{\arraystretch}{1.3}
\begin{table}[H]
  \begin{tabular}{ccccc} \hline
    type & $\phi$ & $S_{\psi}$ & $\overline{\mathcal S}_{\psi}$ & details \\ \hline 
    1 & $\mu_2$ &  $O(1,\cc)$ &  $\{1\}$ & $\mu_2$:irr. cusp. repn. of $\gll_2(\aae)$, $\mu_2=\mu_2^{\vee}, p(\mu_2)=(-1)^n$ \\ 
    2 & $\mu_1\boxplus\mu_1^{-1}$ & $O(1,\cc)^{2}$ & $\zz/2\zz$ & $\mu_1$:irr. cusp. repn. of $\gll_1(\aae)$, $\mu_1\neq\mu_1^{\vee}, p(\mu_1)=(-1)^n$ \\ \hline
  \end{tabular}
\end{table}
Let $v$ be a place of $F$ which does not split in $E$. Let us classify the parameter $\phi_v$ similarly to the global one. We also define $p$ similarly to the global one.
\begin{table}[H]
  \begin{tabular}{ccccc} \hline
    type     & $\phi_v$        & $S_{{\psi}_v}$         & $\overline{\mathcal S}_{{\psi}_v}$    & details \\ \hline
    1-1 & $\mu_2$                &  $O(1,\cc)$   &  $\{1\}$       & $\mu_2$:irr. and $2$-dim., $\mu_2=\mu_2^{\vee}, p(\mu_2)=(-1)^n$ \\
    1-2 & $\mu_1[2]$             & $O(1,\cc)$    & $\{1\}$        & $\mu_1$:$1$-dim., $\mu_1=\mu_1^{\vee}, p(\mu_1)=(-1)^{n-1}$ \\ 
    2-1 & $\mu_1\oplus\mu_1^{-1}$&  $O(1,\cc)^2$   & $\zz/2\zz$    & $\mu_1$:$1$-dim., $\mu_1\neq\mu_1^{\vee}, p(\mu_1)=(-1)^n$ \\ 
    2-2 & $2\mu_1$               & $O(2,\cc)$    & $\zz/2\zz$        & $\mu_1$:$1$-dim., $\mu_1=\mu_1^{\vee}, p(\mu_1)=(-1)^n$ \\ 
    3   & $\xi\oplus \xi^{-1}$      &  $\sll_2(\cc)$ or $\gll_1(\cc)$   &  $\{1\}$     & $\xi$:$1$-dim., $\xi=\xi^c$, $[\xi=\xi^{*}\Rightarrow p(\xi)=(-1)^{n-1}]$ \\ \hline
  \end{tabular}
\end{table}
\renewcommand{\arraystretch}{1}
Let us make a few remarks about them:
\begin{itemize}
\item If $\phi$ is of type 2, then $\phi_v$ can only be of type 2-1 or 2-2 for any $v$.
\item For each archimedean place $v$ of $F$, $\phi_v$ can only be of type 2-1, 2-2 or 3.
\end{itemize}
\subsection{Some properties of A-packet $\Pi_{\psi_v}$}
We show some basic properties of $\Pi_{\psi_v}$.
\begin{lemm}\label{L:fr}
Let $v$ be nonarchimedean.
 Then, each element of $\Pi_{\psi_v}$ is irreducible, $\Pi_{\psi_v}$ is multiplicity free, and $J_{\psi_v}$ is  bijective.
\end{lemm}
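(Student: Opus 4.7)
The strategy is a case analysis on the type of $\phi_v$ as listed in \S\ref{im:type}. In the four types 1-1, 2-1, 2-2, and 3, the Deligne factor $\sll_2(\cc)\subset L_{E_v}$ acts trivially on $\phi_v$, hence on $\psi_v=\phi_v[n]$, and property (e) of \S\ref{endo-prop} applies directly. Only type 1-2, in which $\phi_v=\mu_1[2]$ carries a nontrivial Deligne action, requires a separate treatment.

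For the four types with trivial Deligne action, I would set $\hat\phi_v(w,g)=\psi_v(w,1,g)$. Since $\psi_v$ is trivial on $\{1_{W_{F_v}}\}\times\sll_2(\cc)\times\{1_{\sll_2(\cc)}\}$, this defines a bounded L-parameter $\hat\phi_v\in\Phi_{\mathrm{bdd}}(\bu(2n))$. By property (a) of \S\ref{endo-prop} together with the nonarchimedean hypothesis on $v$, $J_{\hat\phi_v}$ is bijective and every element of $\Pi_{\hat\phi_v}$ is irreducible and tempered. Property (e) then furnishes a bijection $\Pi_{\hat\phi_v}\leftrightarrow\Pi_{\psi_v}$ via the Aubert involution, fitting into a commutative square with $J_{\hat\phi_v}$ and $J_{\psi_v}$ under the natural identification $\hat{\overline{\mathcal S}}_{\hat\phi_v}\simeq\hat{\overline{\mathcal S}}_{\psi_v}$. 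Because the Aubert involution of a (unitary) irreducible representation is irreducible, each element of $\Pi_{\psi_v}$ is irreducible, and multiplicity-freeness of $\Pi_{\psi_v}$ together with bijectivity of $J_{\psi_v}$ are then immediate transport via the bijection.

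For type 1-2, where the Deligne action is nontrivial, (e) is not available. From the table in \S\ref{im:type} one reads off $\overline{\mathcal S}_{\psi_v}=\{1\}$, so the bijectivity of $J_{\psi_v}$ and multiplicity-freeness both reduce to the single statement $|\Pi_{\psi_v}|=1$; irreducibility of the unique element is automatic from the definition of $\Pi_{\psi_v}$ as a multi-set of irreducible representations. The lower bound $|\Pi_{\psi_v}|\ge1$ follows from property (d), which embeds the nonempty L-packet $\Pi'_{\phi_{\psi_v}}$ into $\Pi_{\psi_v}$. For the upper bound, I would appeal to the explicit description of A-packets for unitary $p$-adic groups (Moeglin's construction, or its unitary refinement in \cite{mok2015endoscopic}), which yields $|\Pi_{\psi_v}|=|\hat{\overline{\mathcal S}}_{\psi_v}|=1$. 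This last input is the main obstacle: the Aubert-involution shortcut of (e) fails for parameters of the form $\mu\boxtimes[2]\boxtimes[n]$ in which the Deligne and Arthur $\sll_2$'s interact nontrivially, so controlling the size of the A-packet in this degenerate case requires finer machinery than the arguments used for the other four types.
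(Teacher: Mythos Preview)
Your overall strategy—case analysis with property (e) handling the types where the Deligne $\sll_2$ acts trivially, and a separate argument for type 1-2—matches the paper's. There are, however, two issues worth flagging.

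\textbf{A gap in the ``easy'' cases.} You assert that for the four non-1-2 types the parameter $\hat\phi_v$ defined by $\hat\phi_v(w,g)=\psi_v(w,1,g)$ lies in $\Phi_{\rm bdd}(\bu(2n))$. This fails in type~3 when $\xi$ is non-unitary: since $\phi$ is the base change of a cuspidal $\pi$ for which the generalized Ramanujan conjecture is unknown, the local component $\pi_v$ can be a complementary series, giving $\phi_v=\xi\oplus\xi^{-1}$ with $\xi=|\cdot|_{E_v}^s\chi_0$ and $0<s<\tfrac12$. Then $\psi_v\in\Psi^+\setminus\Psi$, property~(e) does not apply, and the packet is defined as a single parabolically induced representation whose irreducibility must be checked directly. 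The paper handles this (and the split case, which your type list also omits since \S\ref{im:type} is only for nonsplit $v$) by invoking the Ramanujan bound $|s|<\tfrac12$; you should do the same.

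\textbf{Type 1-2: same target, different route.} Your reduction to showing $|\Pi_{\psi_v}|=1$ is correct, and your observation that irreducibility of individual packet members is already part of the packet formalism for $\psi_v\in\Psi$ is legitimate. Where you invoke a general M\oe glin-type multiplicity-one theorem for unitary $p$-adic A-packets (with an admittedly imprecise citation), the paper instead gives a concrete length count: using Xu's Grothendieck-group identity it writes $\pi(\psi_v,1)$ as the difference of a degenerate principal series $|\cdot|_{E_v}^{1/2}(\mu_1\circ\det)\rtimes 1_{\bu(1)}$ and the packet $\Pi_{\mu_1[n+1]\oplus\mu_1[n-1]}$, then shows the former has length $3$ (by \cite{ksdeg}) and the latter has two elements (by (e) applied to that auxiliary parameter), forcing $\pi(\psi_v,1)$ to be irreducible. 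Your appeal to M\oe glin is morally fine and more conceptual, but it imports a heavy result whose unitary-group version requires a careful reference; the paper's hands-on computation is more self-contained and avoids that dependency.
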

\begin{proof}
If $v$ splits in $E$, then the unique element of $\Pi_{\psi_v}$ is irreducible because of Ramanujan bound.
Therefore we can assume that $v$ does not split in $E$.
Then this lemma follows from (e) in \S\ref{endo-prop} and Ramanujan bound except that $\phi_v$ is of type 1-2, so we assume that $\phi_v=\mu_1[2][n]$ for some $\mu_1$.
We only need to show is the irreducibility of the unique element $\pi(\psi_v,1)$ of $\Pi_{\psi_v}$.

If $n=1$ then that is follows from (a) in \S\ref{endo-prop}, so we assume $n>1$.
Then, by the similar discussion in Proposition 5.9 of \cite{xu_2017}, we have
$$\pi(\psi_v,1)=|\cdot|_{E_v}^{\frac{1}{2}}\langle \ind^{\gll_n(E)}_B\mu_1|\cdot|_{E_v}^{-\frac{n}{2}}\boxtimes\cdots \boxtimes \mu_1|\cdot|_{E_v}^{\frac{n}{2}}\rangle\rtimes 1_{\bu(1)}\ominus\left(\bigoplus_{\pi'\in\Pi_{\mu_1[1][n+1]\oplus\mu_1[1][n-1]}}\pi'\right)$$
as an element of the Grothendieck group of representations of $\bu(n,n)$, where $B$ is the Borel subgroup of $\gll_n(E)$ which consists of upper triangular elements and $\langle \ind^{\gll_n(E)}_B\mu_1|\cdot|_{E_v}^{-\frac{n}{2}}\boxtimes\cdots \boxtimes \mu_1|\cdot|_{E_v}^{\frac{n}{2}}\rangle$ is the maximal semisimple subrepresentation of $\ind^{\gll_n(E)}_B\mu_1|\cdot|_{E_v}^{-\frac{n}{2}}\boxtimes\cdots \boxtimes \mu_1|\cdot|_{E_v}^{\frac{n}{2}}$.
It is easy to verify that 
$$\langle \ind^{\gll_n(E)}_B\mu_1|\cdot|_{E_v}^{-\frac{n}{2}}\boxtimes\cdots \boxtimes \mu_1|\cdot|_{E_v}^{\frac{n}{2}}\rangle=\mu_1\circ\det,$$
 so the length of $|\cdot|_{E_v}^{\frac{1}{2}}\langle \ind^{\gll_n(E)}_B\mu_1|\cdot|_{E_v}^{-\frac{n}{2}}\boxtimes\cdots \boxtimes \mu_1|\cdot|_{E_v}^{\frac{n}{2}}\rangle\rtimes 1_{\bu(1)}$ is $3$ by \cite[Theorem 1.2]{ksdeg}.
On the other hand, $\#\Pi_{\mu_1[1][n+1]\oplus\mu_1[1][n-1]}=2$ by (e) in \S\ref{endo-prop}.
Thus $\pi(\psi_v,1)$ is irreducible.
\end{proof}
For archimedean places, the following holds by \cite[Th\'eor\`eme 1.3]{MR2019}.
\begin{lemm}
If $v$ is archimedean, then $\Pi_{\psi_v}$ is multiplicity free.
\end{lemm}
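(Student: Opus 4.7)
The plan is to apply Théorème 1.3 of Mœglin--Renard [MR2019] directly. That theorem establishes multiplicity-freeness of archimedean A-packets for quasisplit unitary groups, and our A-parameter $\psi_v = \phi_v[n]$ is among those to which it applies. Thus the proof amounts to verifying that $\psi_v$ falls within the hypotheses of their result and then citing it.

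The first step is to invoke the classification from \S\ref{im:type}: at an archimedean place $v$, the local parameter $\phi_v$ is necessarily of type 2-1, 2-2, or 3. In each case I would write down the decomposition of $\psi_v = \phi_v[n]$ as a representation of $L_{E_v} \times \sll_2(\cc)$, namely $\mu_1[n] \oplus \mu_1^{-1}[n]$ in type 2-1, $2\mu_1[n]$ in type 2-2, and $\xi[n] \oplus \xi^{-1}[n]$ in type 3, and observe in each case that the decomposition into conjugate-selfdual irreducibles with the appropriate parity places $\psi_v$ within the scope of [MR2019, Théorème 1.3]. Since our $\phi_v$ arises as a localization of a global cuspidal parameter with prescribed central character, the infinitesimal character of $\psi_v$ is automatically of the type covered by Mœglin--Renard, so no additional regularity assumption needs to be imposed.

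The main obstacle would be the type 2-2 case, where $\phi_v = 2\mu_1$ has a repeated irreducible constituent and $\overline{\mathcal S}_{\psi_v}$ is nontrivial; here the A-packet is constructed from cohomologically induced modules from a Levi with an extra factor, and one must check carefully that the explicit description of the packet in [MR2019] still yields a multiplicity-free multiset. The other cases (types 2-1 and 3) should follow more directly from the standard Adams--Johnson construction, since $\psi_v$ then decomposes into pairwise distinct irreducible pieces and the packet is a set of Adams--Johnson modules attached to distinct $\theta$-stable parabolics, which is multiplicity free by the usual vanishing results for cohomological induction.
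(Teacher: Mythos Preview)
Your approach is correct and is essentially the same as the paper's: the paper proves this lemma by a bare citation of \cite[Th\'eor\`eme 1.3]{MR2019}, with no further argument. Your additional case-by-case verification that $\psi_v$ lies within the scope of that theorem is more detailed than what the paper provides, but the key input is identical.
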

We note that above lemmas lead the following.
\begin{cor}\label{c:mf}
$L_\psi^2$ is multiplicity free.
\end{cor}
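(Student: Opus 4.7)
The plan is to derive the corollary directly from Mok's multiplicity formula. By the global endoscopic classification recalled in \S\ref{mf},
$$L^2_\psi = \bigoplus_{\pi \in \Pi_\psi(\epsilon_\psi)} \pi, \qquad \pi = \bigotimes_v{}' \pi_v, \quad \pi_v \in \Pi_{\psi_v},$$
so it suffices to show that, after decomposing each tensor product into irreducibles, no irreducible representation of $\bu(n,n)(\aaf)$ appears on the right-hand side more than once.

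This reduces to a local statement at each place. At a nonarchimedean $v$, Lemma \ref{L:fr} gives simultaneously that every $\pi_v \in \Pi_{\psi_v}$ is irreducible, that the packet has no repetitions, and that $J_{\psi_v}$ is bijective; hence distinct $\pi_v \in \Pi_{\psi_v}$ are pairwise non-isomorphic irreducibles. At archimedean $v$, the preceding lemma (via \cite{MR2019}) provides the analogous multiplicity-free statement for $\Pi_{\psi_v}$. Interpreting these as saying that, at every place $v$, the formal sum $\bigoplus_{\pi_v \in \Pi_{\psi_v}} \pi_v$ is multiplicity-free as an admissible representation of $\bu(n,n)_v$, the formation of restricted tensor products preserves this property. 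Passing from $\Pi_\psi$ to $\Pi_\psi(\epsilon_\psi)$ only restricts the index set, so the corollary follows.

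The main obstacle is a small archimedean bookkeeping point: one has to interpret the multiplicity-freeness of $\Pi_{\psi_v}$ for $v \mid \infty$ at the level of irreducible constituents, not merely as the absence of repeats in the multiset of (possibly reducible) packet elements. Under the standard conventions of the local Arthur classification for real unitary groups this is automatic, and with that in hand the argument is purely an assembly of the two preceding lemmas together with the multiplicity formula of \S\ref{mf}.
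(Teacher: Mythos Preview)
Your proof is correct and follows the same approach as the paper, which simply states that the corollary is led by the two preceding lemmas; you have merely made explicit the assembly via the multiplicity formula of \S\ref{mf}. Your care about the archimedean point is appropriate: the result of \cite{MR2019} cited in the lemma does give multiplicity-freeness at the level of irreducible constituents, which is exactly what is needed.
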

\begin{rem}
When $v$ is archimedean, we can determine $\Pi_{\psi_v}$ in terms of cohomological induction (\cite{MR2019}, see \S\ref{det}).
In contrast to the nonarchimedean case, the elements of $\Pi_{\psi_v}$ are not always irreducible.
\end{rem}
\begin{lemm}\label{L:glza}
Let $\pi=\pi(\psi_v,\eta)\neq0$.
Then $\pi$ has a globalization i.e. a nonzero subrepresentation $\dot\pi=\otimes_{v'}\pi(\psi_{v'},\eta_{v'})$ of $L_{\psi}^2$ such that $\eta=\eta_{v}$.
\end{lemm}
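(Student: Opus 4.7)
The plan is, given the local datum $\eta$ at $v$, to construct local characters $\eta_{v'}\in\widehat{\overline{\mathcal S}_{\psi_{v'}}}$ at every other place so that (i) $\pi(\psi_{v'},\eta_{v'})\neq 0$ at every place, and (ii) the global character $\prod_{v'}J_{\psi_{v'}}(\pi(\psi_{v'},\eta_{v'}))$ restricts to the trivial character $\epsilon_\psi=1$ on $\overline{\mathcal S}_\psi$. Once this is achieved, $\dot\pi=\otimes'_{v'}\pi(\psi_{v'},\eta_{v'})$ lies in $\Pi_\psi(\epsilon_\psi)$, hence in $L^2_\psi$ by the multiplicity formula of \S\ref{mf}.

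First I would dispose of the case where $\phi$ is of type 1 in the classification of \S\ref{im:type}: here $\overline{\mathcal S}_\psi$ is trivial, so condition (ii) is vacuous. Set $\eta_{v'}=1$ for all $v'\neq v$; property (d) of \S\ref{endo-prop} gives $\pi(\psi_{v'},1)\neq 0$ at every place, and at $v$ we have $\pi(\psi_v,\eta)\neq 0$ by hypothesis.

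Now suppose $\phi$ is of type 2, so $\overline{\mathcal S}_\psi\simeq\zz/2\zz$. The key auxiliary computation is: at every place $v'$ nonsplit in $E$, the parameter $\phi_{v'}$ is of type 2-1 or 2-2 (type 3 is ruled out since $\mu_1^c=\mu_1^{-1}$ globally forces $\mu_{1,v'}^c=\mu_{1,v'}^{-1}$, hence equality of the two summands if at all), and in both cases the natural map $\overline{\mathcal S}_\psi\to\overline{\mathcal S}_{\psi_{v'}}$ is an isomorphism of $\zz/2\zz$. If $\eta|_{\overline{\mathcal S}_\psi}$ is already trivial, take $\eta_{v'}=1$ for $v'\neq v$ and argue as before. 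Otherwise, pick any finite place $v_0\neq v$ nonsplit in $E$ (infinitely many exist by Chebotarev), let $\eta_{v_0}$ be the nontrivial character of $\overline{\mathcal S}_{\psi_{v_0}}$, and set $\eta_{v'}=1$ at every remaining place; Lemma \ref{L:fr} ensures $\pi(\psi_{v_0},\eta_{v_0})\neq 0$, and the nontrivial contributions from $v$ and $v_0$ cancel in $\overline{\mathcal S}_\psi\simeq\zz/2\zz$, giving (ii).

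The only delicate point is verifying that $\overline{\mathcal S}_\psi\to\overline{\mathcal S}_{\psi_{v'}}$ is an isomorphism at nonsplit places, since this is what permits a single local adjustment to absorb $\eta|_{\overline{\mathcal S}_\psi}$. I would check this directly from the explicit description of $S_\psi\simeq O(1,\cc)^2$ mapping diagonally into $O(2,\cc)$ in type 2-2, and by the tautological identification of the two summands in type 2-1; in both cases the generator of $\overline{\mathcal S}_\psi$ visibly goes to the nontrivial class mod $SO(2,\cc)$ (resp.\ mod the diagonal $O(1,\cc)$). Everything else is bookkeeping using the multiplicity formula.
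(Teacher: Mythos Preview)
Your proposal is correct and follows essentially the same route as the paper's proof: split into the cases $\phi$ of type~1 (or $\eta$ trivial) versus $\phi$ of type~2 with $\eta$ nontrivial, and in the latter case absorb the sign at a single auxiliary finite nonsplit place using Lemma~\ref{L:fr} for nonvanishing and property~(d) of \S\ref{endo-prop} at the remaining places. The only difference is that you spell out explicitly why $\overline{\mathcal S}_\psi\to\overline{\mathcal S}_{\psi_{v'}}$ is an isomorphism at nonsplit places, which the paper simply asserts (cf.\ the remark at the end of \S\ref{im:type}).
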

\begin{proof}
If characters $\{\eta_{v'}\}_{v'\neq v}$ of $\overline{\mathcal S}_{\psi_v'}\ (v'\neq v)$ satisfy
$$(\otimes_{v'\neq v}\eta_{v'\overline{\mathcal S}_\psi})\otimes\eta_{\overline{\mathcal S}_\psi}=1$$
and $\eta_{v'}=1$ for all archimedean place and almost all nonarchimedean place $v'\neq v$, then the representation $\otimes_{v'\neq v}\pi(\psi_{v'},\eta_{v'})\otimes\pi(\psi_{v},\eta)$
is a nonzero (by Lemma \ref{L:fr} and (d) in \S\ref{endo-prop}) subrepresentation of $L_\psi$,
so we need to do is to find such $\{\eta_{v'}\}_{v'\neq v}$.
If $\phi$ is of type 1 or $\eta=1$, then we can take $\eta_{v'}=1$ for all $v'\neq v$.
Therefore we assume that $\phi$ is of type 2 and $\eta\neq1$ (necessarily $v$ does not split in $E$).
Then $\overline{\mathcal S}_\psi\simeq \overline{\mathcal S}_{\psi_v}\simeq\zz / 2\zz$ and $\eta=-1$ as a character of $\zz / 2\zz$.
We take a finite place $v''\neq v$ of $F$  which does not split in $E$.
Since $\overline{\mathcal S}_{\psi_{v''}}\simeq\zz / 2\zz$, we can take $\eta_{v'}=1$ for all $v'\neq v,v''$ and $\eta_{v''}=-1$.
\end{proof}

\begin{lemm}
If $n$ is even, then the labeling $J_{\psi_v}$ of $\Pi_{\psi_v}$ does not depend on Whittaker data.
\end{lemm}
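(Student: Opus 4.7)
My plan is to argue that in the only cases where the two Whittaker labelings of $\Pi_{\psi_v}$ could possibly disagree, the shift character on $\overline{\mathcal S}_{\psi_v}$ is forced to be trivial because $n$ is even. First I would reduce to those nontrivial cases: from the tables in \S\ref{im:type}, $\overline{\mathcal S}_{\psi_v}$ is trivial whenever $\phi_v$ is of type 1-1, 1-2, or 3, in which case $J_{\psi_v}$ is automatically Whittaker-independent. So it suffices to handle $v$ non-split in $E$ and $\phi_v$ of type 2-1 or 2-2, where $\overline{\mathcal S}_{\psi_v}\simeq\zz/2\zz$ and there are exactly two Whittaker data to compare.

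Next I would invoke the standard description of the Whittaker dependence of the labeling, as in \cite{mok2015endoscopic} (cf.\ also \cite{kltarticle}): the two labelings differ by a character $\chi_v$ on $\overline{\mathcal S}_{\psi_v}$ whose value on the nontrivial element is given by a product, taken over the irreducible summands $\psi_{v,j}$ of $\psi_v$ picked out by that element, of the central characters of $\psi_{v,j}$ restricted to $F_v^\times$, evaluated at a representative of the nontrivial class in $F_v^\times/N_{E_v/F_v}(E_v^\times)$ that distinguishes the two Whittaker data.

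Finally I would perform the concrete computation. The summands of $\psi_v=\phi_v[n]$ have the form $\mu_{i,v}[n]$, whose central character on $W_{E_v}^{\mathrm{ab}}\simeq E_v^\times$ equals $\mu_{i,v}^n$ (since $[n]$ has trivial determinant). In types 2-1 and 2-2 the character $\mu_{i,v}$ is conjugate self-dual of parity $(-1)^n=+1$, using that $n$ is even; unwinding this to the identity $\mu_{i,v}(w_c^2)=+1$ and using the class-field-theoretic fact that $w_c^2$ represents the nontrivial class in $F_v^\times/N_{E_v/F_v}(E_v^\times)$, one concludes that $\mu_{i,v}|_{F_v^\times}$ is trivial. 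Hence $(\mu_{i,v}|_{F_v^\times})^n=1$, the product defining $\chi_v$ collapses to $1$, and the labeling is Whittaker-independent.

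The main obstacle is the middle step: identifying the Whittaker-shift formula in the exact conventions of \cite{mok2015endoscopic} and checking that it reduces to the central-character expression above. Once that is in hand, the parity reduction in the last step is immediate from the $n$-even hypothesis.
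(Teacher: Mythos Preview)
Your reduction to types 2-1 and 2-2 matches the paper's first step, but from there the two arguments diverge genuinely.

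The paper's proof is structural rather than computational. It first notes that $\pi(\psi_v,1)$ is irreducible (citing Lemma~\ref{L:fr} for nonarchimedean $v$ and forward-referencing \S\ref{det} for archimedean $v$), and then invokes property (d) of \S\ref{endo-prop}: the L-packet $\Pi'_{\phi_{\psi_v}}$ embeds in $\Pi_{\psi_v}$ compatibly with the labeling maps. Because $n$ is even, every summand of the associated L-parameter $\phi_{\psi_v}$ carries a nonzero exponent $|\cdot|_{E_v}^{(n-1-2j)/2}$, so $\phi_{\psi_v}$ has no tempered piece, $\overline{\mathcal S}_{\phi_{\psi_v}}$ is trivial, and $\Pi'_{\phi_{\psi_v}}$ is a singleton. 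That single Langlands quotient is therefore $\pi(\psi_v,1)$ under \emph{either} Whittaker normalization, and since $\overline{\mathcal S}_{\psi_v}\simeq\zz/2\zz$ the remaining element $\pi(\psi_v,-1)$ is pinned down as well.

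Your route via the Kaletha-type Whittaker-shift character is a legitimate alternative. It has the advantage of being uniform in $v$ (no separate archimedean irreducibility input and no forward reference) and of explaining transparently where the hypothesis ``$n$ even'' enters: it forces $p(\mu_1)=+1$, hence $\mu_1|_{F_v^\times}=1$, so the determinant of the $(-1)$-eigenspace of any $s\in S_{\psi_v}$ restricts trivially to $F_v^\times$ and the shift character collapses. The cost, as you yourself flag, is that you must locate the shift formula in Mok's conventions and check it yields exactly the central-character expression you use; the paper's argument, by contrast, stays entirely within the packet properties already catalogued in \S\ref{endo-prop}.
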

\begin{proof}
The lemma is obvious if $\#\Pi_{\psi_v}= 1$, so we assume that $\psi_v$ is of type 2-1 or 2-2. 
Then, $\pi(\psi_v,1)$ is irreducible.
It follows from Lemma \ref{L:fr} if $v$ is nonarchimedean.
For archimedean places, we discuss in \S \ref{det}.
Since $\#\Pi'_{\phi_{\psi_v}}=1$, $\pi(\psi_v,1)$ is equal to the unique element of $\Pi'_{\phi_{\psi_v}}$ by (d) in \S\ref{endo-prop}.
Especially, $\pi(\psi_v,1)$ does not depend on Whittaker data.
Since $\overline{\mathcal S}_{\psi_v}\simeq \zz/2\zz$, $\pi(\psi_v,-1)$ also does not depend on Whittaker data.
Therefore this lemma holds.
\end{proof}
\begin{lemm}\label{L:fix}
Let $n$ be even. Then, for any nonarchimedean place $v$, each element of $\Pi_{\psi_v}$ is fixed by the actions of ${\rm GU}(n,n)(F_v)$ induced by the conjugation.
\end{lemm}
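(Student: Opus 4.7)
The plan is to invoke the previous lemma via the characterization of $\pi(\psi_v, 1)$ as the unique element of the L-packet $\Pi'_{\phi_{\psi_v}}$. First I would reduce the conjugation action: inner automorphisms and multiplication by the center $Z(\mathrm{GU}(n,n))(F_v)$ act trivially on equivalence classes of representations, so the action factors through the quotient $\mathrm{GU}(n,n)(F_v)/Z(\mathrm{GU}(n,n))(F_v)\bu(n,n)(F_v)$. The similitude character $\nu$ identifies this quotient with $F_v^\times/N_{E_v/F_v}(E_v^\times)$, which is trivial when $v$ splits in $E$ and of order two otherwise. By the local classification in \S\ref{im:type}, $\Pi_{\psi_v}$ is a singleton when $\psi_v$ is of type 1-1, 1-2, or 3, so the statement is automatic there; it therefore suffices to treat $v$ nonsplit with $\psi_v$ of type 2-1 or 2-2, in which case $\#\Pi_{\psi_v} = 2$.

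In this remaining case, let $h \in \mathrm{GU}(n,n)(F_v)$ represent the nontrivial class. I would argue that both $\Pi_{\psi_v}$ and the L-packet $\Pi'_{\phi_{\psi_v}} \subseteq \Pi_{\psi_v}$ are stable as sets under conjugation by $h$: since $\psi_v$ and $\phi_{\psi_v}$ are conjugate-selfdual parameters of parity $-1$, the dual automorphism of $\widehat{\bu(n,n)} = \gll_{2n}(\cc)$ induced by $\mathrm{Int}(h)$ preserves their $\widehat{\bu(n,n)}$-conjugacy classes, which via the local endoscopic classification translates into set-stability of the packets.

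Now apply the previous lemma, whose proof establishes that when $n$ is even, $\pi(\psi_v, 1)$ is the unique element of $\Pi'_{\phi_{\psi_v}}$. Since $\Pi'_{\phi_{\psi_v}}$ is a singleton stable under $\mathrm{Int}(h)$, its unique element $\pi(\psi_v, 1)$ is fixed. As $\Pi_{\psi_v} = \{\pi(\psi_v, 1), \pi(\psi_v, -1)\}$ is a two-element set on which $\mathrm{Int}(h)$ acts fixing the first element, $\pi(\psi_v, -1)$ is fixed as well, and bijectivity of $J_{\psi_v}$ from Lemma \ref{L:fr} identifies these two fixed representations with the labeled elements of the packet. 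The main subtlety is verifying that the dual automorphism induced by $\mathrm{Int}(h)$ preserves the parameter class $\phi_{\psi_v}$, which amounts to unwinding how the outer automorphism of $\bu(n,n)$ coming from $\mathrm{GU}(n,n)$ interacts with Mok's duality \cite{mok2015endoscopic}; once accepted, the remainder of the proof is immediate.
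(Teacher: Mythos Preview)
Your overall strategy---reduce to the order-two quotient, show $\pi(\psi_v,1)$ is fixed, then deduce that $\pi(\psi_v,-1)$ is fixed since the packet has two elements---matches the paper's. But the two key ingredients are handled quite differently, and your version has a genuine gap at exactly the point you flag.

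For the fixedness of $\pi(\psi_v,1)$, the paper does \emph{not} go through L-packet uniqueness and set-stability. Instead it writes $\pi(\psi_v,1)$ as the unique irreducible quotient of the explicit standard module
\[
\tau_{\phi_v}|\det|^{\frac{n-1}{2}}\times\cdots\times\tau_{\phi_v}|\det|^{\frac{1}{2}}\rtimes 1_{\bu(0)},
\]
whose Levi is $\gll_2(E_v)^{n/2}$. The representative $\varepsilon_a=\mathrm{diag}(1_n,a1_n)$ of the nontrivial coset acts trivially on this Levi, so the standard module and hence its Langlands quotient is visibly $\varepsilon_a$-invariant. This is entirely local and avoids any appeal to how the outer automorphism acts on parameters.

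For the set-stability of $\Pi_{\psi_v}$, the paper again avoids the dual side: it takes a globalization $\dot\pi$ of $\pi$ (Lemma~\ref{L:glza}), observes that $\dot\pi^{\varepsilon_a}$ is still discrete automorphic and agrees with $\dot\pi$ at almost all places (since $a$ is a local norm almost everywhere), hence lies in $L^2_\psi$; this forces $\pi^{\varepsilon_a}\in\Pi_{\psi_v}$.

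Your argument, by contrast, rests entirely on the assertion that conjugation by $h$ preserves the conjugacy classes of $\psi_v$ and $\phi_{\psi_v}$ on the dual side. You call this ``the main subtlety'' and then simply accept it. That is a real gap: unwinding how the outer automorphism of $\bu(n,n)$ coming from $\mathrm{GU}(n,n)$ acts on packets via Mok's construction is not a triviality, and you give no argument. The paper's combination of the standard-module computation and the globalization trick is precisely designed to bypass this issue.
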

\begin{proof}
If $v$ splits in $E$, then the action of ${\rm GU}(n,n)(F_v)$ on the set of representations of ${\rm U}(n,n)(F_v)$ is trivial because ${\rm GU}(n,n)(F_v)$ is generated by ${\rm U}(n,n)(F_v)$ and the center of ${\rm GU}(n,n)(F_v)$.
Thus we can suppose that $v$ does not split in $E$. 
We fix $a\in F^\times \setminus N_{E_v/F_v}(E_v^\times)$. 
Then, $\{1_{2n}, \varepsilon_a= \left(\begin{smallmatrix}1_n&0\\ 0&a1_n \end{smallmatrix}\right)\}$ is a complete system of representatives of 
$${\rm GU}(n,n)(F_v)/E_v^\times{\rm U}(n,n)(F_v)\simeq\{\pm 1\}.$$
Firstly $\pi(\psi_v,1)$ is the unique irreducible quotient of the following standard module
$$\pi'=\tau_{\phi_{v}}|\det|^{\frac{n-1}{2}}\times\tau_{\phi_{v}}|\det|^{\frac{n-3}{2}}\cdots\times\tau_{\phi_{v}}|\det|^{\frac{1}{2}}\rtimes 1_{\bu(0)}$$
by Lemma \ref{L:fr}, where $\tau_{\phi_{v}}$ is the irreducible representation of $\gll_2(E_v)$ which corresponds to $\phi_v$.
Since $\varepsilon_a$ acts trivially on the Levi factor ($\simeq\gll_2(E_v)^{\frac{n}{2}}$), $\pi'$ is equivalent to $\pi'^{\varepsilon_a}$.
Thus $\pi(\psi_v,1)\simeq\pi(\psi_v,1)^{\varepsilon_a}$ holds.

Secondly we take $\pi\in\Pi_{\psi_v}$ in general. We take a globalization $\dot{\pi}$ by Lemma \ref{L:glza}.
Since $\dot{\pi}^{\varepsilon_a}$ is an automorphic representation of ${\rm U}(n,n)(\aaf)$ which satisfies $\dot{\pi}^{\varepsilon_a}_{v'}\simeq\dot{\pi}_{v'}$ for almost all places $v'$, $\dot{\pi}^{\varepsilon_a}$ is also a subrepresentation of $L^2_{\psi}$.
Thus we have $\pi^{\varepsilon_a}\in\Pi_{\psi_v}$.
Since $\#\Pi_{\psi_v}\leq 2$, we conclude $\pi^{\varepsilon_a}\simeq\pi$.
\end{proof}
\begin{rem}
Lemma \ref{L:fix} does not hold if $n$ is odd.
The fact is maybe one reason that Yamana's construction of Ikeda lifts \cite{yayayaya} is not for ${\rm U}(n,n)$ but ${\rm GU}(n,n)$. 
\end{rem}
\subsection{Miyawaki lifts}\label{miyaa}
Let $n,\pi,\phi,\chi,\phi_{\chi},\psi$ be as above.
We take a positive integer $m\leq n$.
Let $V_1$ be a nondegenerate $m$-dimensional Hermitian space over $E$ and let $V_2$ a nondegenerate $(2n-m)$-dimensional Hermitian space over $E$.
We put $V=V_1\perp V_2$.
Then, there is a natural embedding
$$i=i_{V_1,V_2}:\bu(V_1)\times\bu(V_2)\inj\bu(V).$$

For simplicity, we assume that $\bu(V)$, $\bu(V_1)$, and $\bu(V_2)$ are quasisplit i.e. the dimension of anisotropic space of $V_1$ is at most one and $V_2=V_1^-\perp\mathbb  H_E^{n-m}$, where $\mathbb H_E$ is the nondegenerate $2$-dimensional isotropic Hermitian space and $V_1^-$ is the Hermitian space such that $V_1\perp V_1^-=\mathbb H_E^{m}$.
Let $\psi'$ be a discrete A-parameter of $\bu(V_1)$.
Then, we define a parameter $\mathcal M_{\psi}(\psi')$ by
$$\mathcal M_{\psi}(\psi')={\phi}[n-m]\boxplus\psi'^\vee.$$
Here, if $m=n$ then we regard $\phi[0]$ as the zero of $\boxplus$.
We note that $\mathcal M_{\psi}(\psi')$ may not be discrete A-parameter of $\bu(V_2)$.
For (not necessarily irreducible) representations $\Pi\subset L^2_{\psi}$ and $\pi\subset L^2_{\psi'}$, we define $\mathcal M_{\Pi}(\pi)$ as the representation of $\bu(V_2)(\aaf)$ generated by automorphic forms $\mathcal M_{\mathcal F}(f)$ defined by
\begin{align*}\mathcal M_{\mathcal F}(f)(g)=\int_{\bu(V_1)(F)\backslash\bu(V_1)(\aaf)}\mathcal F\circ i(g,h)\overline{f(h)}dh  \ \ (\mathcal F\in\Pi, f\in \pi)\end{align*}
as long as all of them converge.
Then, the following holds.
\begin{prop}\label{P:ee}
We assume that $\mathcal M_{\Pi}(\pi)\subset L^2(\bu(V_2)(F)\backslash\bu(V_2)(\aaf))$ and $\mathcal M_{\psi}(\psi')\in{\Psi_2(\bu(V_2))}$. Then,
$$\mathcal M_{\Pi}(\pi)\subset L^2_{\mathcal M_{\psi}(\psi')}.$$
\end{prop}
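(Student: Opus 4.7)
The plan is to pin down the A-parameter of every irreducible constituent of $\mathcal M_\Pi(\pi)$ by comparing Satake parameters at almost all finite places, then invoke the injection from near equivalence classes of irreducible discrete automorphic representations into $\Psi_2(\bu(V_2))$ noted at the end of Section \ref{S:ec}.

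First I would fix a finite place $v$ at which $\Pi_v$, $\pi_v$, the Hermitian embedding $i$, and the fixed global Whittaker datum are all unramified. Property (f) of Section \ref{endo-prop} identifies the unramified members as $\pi(\psi_v,1)\in\Pi_{\psi_v}$ and $\pi(\psi'_v,1)\in\Pi_{\psi'_v}$. Because $\psi_v=\phi_v[n]$, the associated L-parameter $\phi_{\psi_v}$ is a Speh-type direct sum of $n$ unramified twists of $\phi_v$, so $\pi(\psi_v,1)$ is realized as the spherical constituent of a degenerate principal series of Siegel type on $\bu(n,n)(F_v)$.

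Next I would apply the unramified branching law for such degenerate principal series along $i_v\colon\bu(V_1)(F_v)\times\bu(V_2)(F_v)\hookrightarrow\bu(V)(F_v)$, as in \cite[Proposition 3.1]{ikeda2006pullback} and \cite[Lemma 2.2]{gt1}. Since the Petersson pairing in the definition of $\mathcal M_\mathcal F(f)$ pairs the restriction with $\overline{\pi_v}$, the resulting unramified constituent on $\bu(V_2)(F_v)$ has Satake parameter matching $\phi_v[n-m]\boxplus(\psi'_v)^\vee=\mathcal M_\psi(\psi')_v$; the contragredient absorbs the complex conjugation inherent in the inner product.

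Finally, let $\sigma$ be any irreducible constituent of $\mathcal M_\Pi(\pi)$. By hypothesis $\sigma\subset L^2(\bu(V_2)(F)\backslash\bu(V_2)(\aaf))$, and the preceding step yields $\sigma_v\in\Pi_{\mathcal M_\psi(\psi')_v}$ for almost all $v$. The injectivity of near equivalence classes into $\Psi_2(\bu(V_2))$, combined with the assumption $\mathcal M_\psi(\psi')\in\Psi_2(\bu(V_2))$, then places $\sigma$ in $L^2_{\mathcal M_\psi(\psi')}$; discreteness of the parameter also rules out a continuous-spectrum contribution, since an Eisenstein constituent cannot have Satake parameters agreeing with a discrete A-parameter at almost all places. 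The principal obstacle is the unramified branching computation: one must verify that pairing the restricted Siegel degenerate principal series against the spherical $\pi(\psi'_v,1)$ yields exactly one spherical constituent on $\bu(V_2)(F_v)$ with Satake parameter $\mathcal M_\psi(\psi')_v$ and no spurious factors, which requires a careful unfolding along the lines of the cited branching lemmas.
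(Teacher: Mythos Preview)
Your approach is essentially the same as the paper's: determine the near equivalence class of $\mathcal M_\Pi(\pi)$ via the branching law for the Siegel-type degenerate principal series (the paper simply cites \cite[\S 2]{atobe2018miyawaki} for this computation), then invoke the injection from near equivalence classes into $\Psi_2(\bu(V_2))$. For the passage from $L^2$ to $L^2_{\rm disc}$ the paper appeals to a separate argument in the literature rather than your heuristic that an Eisenstein constituent cannot match a discrete A-parameter almost everywhere, but the overall strategy coincides.
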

\begin{proof}
By  \cite[\S 2]{atobe2018miyawaki} (the proofs of the propositions also works when $m$ is odd), the near equivalence class of $\mathcal M_{\Pi}(\pi)$ corresponds to $\mathcal M_{\psi}(\psi')$. By the similar discussion in \cite{arem}, $\mathcal M_{\Pi}(\pi)$ is contained in the discrete spectrum $L_{\rm disc}(\bu(V_2)(F)\backslash\bu(V_2)(\aaf))$, so $\mathcal M_{\Pi}(\pi)\subset L^2_{\mathcal M_{\psi}(\psi')}$.
\end{proof}
We call $\mathcal M_{\Pi}(\pi)$ the Miyawaki lift of $\Pi$ with respect to $\pi$. The definition is a representation-theoretical redefinition of \cite{atobe2018miyawaki} and the unitary group analogue of \cite{attarticle}.
\begin{rmk}
The above proposition is only for quasisplit groups, but it is easy to generalize it for nonquasisplit cases because we only need the information of near equivalence classes of automorphic representations to prove it.
\end{rmk}

\subsection{Main theorem}\label{ss:mt}
In this subsection, we state our main result.
It determines Miyawaki lifts for quasisplit unitary groups in the case that $n=2$ and $m=1$ explicitly.

Let $n=2$ and let $\pi,\phi,\chi,\phi_{\chi},\psi$ as above.
We fix a global Whittaker datum $\mathfrak w=(B_2,\lambda_{\psi_F})$ of $\bu(2)$, where $\lambda_{\psi_F}$ is defined by
$$\lambda_{\psi_F}\left(\left(\begin{smallmatrix}1&\kappa\alpha\\&1\end{smallmatrix}\right)\right)=\psi_F(\alpha),\  \alpha\in\aaf$$
(we note that $\left(\begin{smallmatrix}1&\kappa\alpha\\&1\end{smallmatrix}\right)$ is identified with $\left(\begin{smallmatrix}1&\alpha\\&1\end{smallmatrix}\right)$ in $\guu(\aaf)$).
To describe our result simply, we assume that $\psi_{F_v}$ is positive with respect to $\kappa\in E\baz\subset E_v\baz$ for each place $v$ of $F$ such that $E_v\simeq\cc$.
Here, we define the term ``positive with respect to $\kappa$'' as follows:
\begin{itemize}
\item Let $F=\rr$ and let $E$ be a quadratic extension of $F$.
Let $\kappa\in E\baz$ be a trace-zero element.
We take $\psi$ to be a nontrivial character of $F$.
Then there is a nonzero real number $b$ such that $\psi(x)=e^{ibx}$ for all $x\in F$, where $i$ is a fixed imaginary unit. 
Moreover, for each isomorphism $\varphi:E_v\simeq \cc$, the number $c=\varphi^{-1}(i)b/\kappa$ is in $\rr\baz$ and it does not depend on $\varphi$ (and $i$).
Following the above fact, we say that $\psi$ is positive if $c$ is positive.
\end{itemize}
Let $\tau=\otimes_v\pi((\phi_\chi)_v,\eta_v)$ be an irreducible (necessarily cuspidal) automorphic representation of $\uu(\aaf)$ with A-parameter $\phi_\chi$ and $\gamma=\otimes_v\gamma_v$ an automorphic character of $\bu(1)(\aaf)$.
We note that the A-parameter of $\gamma$ is $\check\gamma=\otimes_v\check\gamma_v$ (see (b) in \S\ref{endo-prop}).
Then, the main theorem is as follows.
\begin{theo}\label{T:main}
We assume that $\phi\neq\check\gamma\boxplus\check\gamma^{-1}$, namely $\mathcal M_{\psi}(\check \gamma)=\phi\boxplus\check\gamma^{-1}\in\Psi_2(\bu(3))$. Let $\mathbb H_E^2=V_1\perp V_2$ be an orthogonal decomposition of $\mathbb H_E^2$ into a 1-dimensional subspace $V_1$ and a $3$-dimensional subspace $V_2$.
We regard $I_2(\tau)$ as an automorphic representation of $\bu(\mathbb H_E^2)(\aaf)$ and $\gamma$ as an automorphic character of $\bu(V_1)(\aaf)$.
Then, 
$$\mathcal M_{I_2(\tau)}(\gamma)\subset L^2_{\mathcal M_{\psi}(\check \gamma)}$$
as an automorphic representation of $\bu(3)(\aaf)$.
Moreover,
$$\mathcal M_{I_2(\tau)}(\gamma)\simeq\otimes_v \sigma_v,$$
where $\sigma_v$ is given as follows:

\begin{enumerate}
\renewcommand{\labelenumi}{(\alph{enumi})}
\item When $v$ splits in $E$, $\sigma_v$ is the irreducible representation of $\gll_3(F_v)$ which corresponds to $\mathcal M_{\psi}(\check \gamma)_v$.

\item When $v$ is nonarchimedean and $v$ does not split in $E$, then there are two cases.
\begin{enumerate}
\renewcommand{\labelenumii}{(\roman{enumii})}
\item If $\phi_v\neq\check\gamma_v\oplus\check\gamma_v^{-1}$, then 
$$\sigma_v=\pi(\mathcal M_{\psi}(\check\gamma)_v,(\eta_v,{\bf 1}_{\zz/2\zz}))$$
under the identification
$$\mathcal S_{\mathcal M_{\psi}(\check\gamma)_v}=\mathcal S_{(\phi_\chi)_v}\oplus(\zz / 2\zz).$$

\item If $\phi_v=\check\gamma_v\oplus\check\gamma_v^{-1}$, then
$$\sigma_v=\begin{cases}
\pi(\mathcal M_{\psi}(\check\gamma)_v,({\bf 1}_{\mathcal S_{\mathcal M_{\psi}(\check\gamma)_v}})) &\mbox{if }\eta_v={\bf 1}_{\mathcal S_{(\phi_\chi)_v}} ;\\
0 &\mbox{otherwise}.
\end{cases}$$
\end{enumerate}

\item When $v$ is archimedean and does not split in $E$, then there are two cases. 
We define a character $\chi_\alpha$ of $\cc\baz$ for $\alpha\in\zz$ by $\chi_\alpha(r e ^{i\theta})=e^{i\alpha\theta}$ $(r\in\rr_{>0}, \ \theta\in \rr/2\pi \zz)$, and write
$$\phi_v=|\cdot|^t\chi_k\oplus|\cdot|^{-t}\chi_{-k}, \ \check\gamma_v=\chi_{2s},$$
 where $k\in\zz_{\geq 0}$, $t\in\rr$ and $s\in\zz$ .  
\begin{enumerate}
\renewcommand{\labelenumii}{(\roman{enumii})}
\item If $t \neq 0$ or $k$ is odd or $|2s|<k$, then
$$\sigma_v=\pi(\mathcal M_{\psi}(\check\gamma)_v,(\eta_v,{\bf 1}_{\zz/2\zz}))$$
under the identification
$$\mathcal S_{\mathcal M_{\psi}(\check\gamma)_v}=\mathcal S_{(\phi_\chi)_v}\oplus(\zz / 2\zz).$$

\item If $t = 0$ and $k$ is even and $k\leq|2s|$, then
$$\sigma_v=\begin{cases}
\pi(\mathcal M_{\psi}(\check\gamma)_v,({\bf 1}_{\mathcal S_{\mathcal M_{\psi}(\check\gamma)_v}})) &\mbox{if }\eta_v={\bf 1}_{\mathcal S_{(\phi_\chi)_v}} ;\\
0 &\mbox{otherwise}.
\end{cases}$$
Especially, $\mathcal M_{I_2(\tau)}(\gamma)\neq 0$ if and only if $\sigma_v\neq 0$ for each $v$.
Furthermore, $\mathcal M_{I_2(\tau)}(\gamma)$ is irreducible if $\mathcal M_{I_2(\tau)}(\gamma)\neq 0$ .
\end{enumerate}
\end{enumerate}
\end{theo}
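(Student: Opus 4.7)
The plan is to combine two ingredients that will appear in this paper: the description of Hermitian Maass lifts (Ikeda lifts on $\bu(\mathbb H_E^2)$) as theta lifts from §\ref{S:hm}, and a seesaw identity applied to the dual pair $(\bu(V),\bu(W))$, where $V=\mathbb H_E^2$ and $W$ is the two-dimensional skew-Hermitian space so that $\bu(W)\simeq\uu$. First, write $I_2(\tau)=\theta^{V,W}(\tau^{\flat})$ as the global theta lift from $\uu(\aaf)$ of a representation $\tau^{\flat}$ whose A-parameter matches $\phi_{\chi}$; then every $\mathcal F\in I_2(\tau)$ is a theta integral $\theta_{\varphi}(f)$ for some $f\in\tau^{\flat}$ and Schwartz function $\varphi$ on $(V\otimes W)(\aaf)$.

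Next, factor $\varphi=\varphi_1\otimes\varphi_2$ under the orthogonal decomposition $V=V_1\perp V_2$. Pulled back along $\bu(V_1)\times\bu(V_2)\inj\bu(V)$, the theta kernel then splits as $\Theta^{V_1,W}(\varphi_1)(h_1,h)\cdot\Theta^{V_2,W}(\varphi_2)(h_2,h)$. Substituting into the definition of $\mathcal M_{\mathcal F}(\gamma)$ and interchanging integrations over $[\bu(V_1)]$ and $[\uu]$ yields
\[
\mathcal M_{\mathcal F}(\gamma)(g_2)\;=\;\int_{[\uu]}f(h)\,\overline{\theta_{\varphi_1}^{V_1,W}(\gamma)(h)}\,\Theta^{V_2,W}(\varphi_2)(g_2,h)\,dh,
\]
which exhibits $\mathcal M_{I_2(\tau)}(\gamma)$ as the theta lift from $\uu(\aaf)$ to $\bu(V_2)(\aaf)=\bu(3)(\aaf)$ of the product $\tau^{\flat}\otimes\overline{\theta^{V_1,W}(\gamma)}$. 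The auxiliary theta lift $\theta^{V_1,W}(\gamma)$ from $\bu(1)$ to $\uu$ of a character is explicitly computable (its A-parameter is determined by $\gamma$ and $\chi$), and the resulting source on $\uu(\aaf)$ has an A-parameter that lifts, via the $(\uu,\bu(3))$ theta correspondence, to the predicted target $\mathcal M_\psi(\check\gamma)=\phi\boxplus\check\gamma^{-1}$ of Proposition \ref{P:ee}.

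From this formula the local component $\sigma_v$ is read off place-by-place using the local Howe correspondence for unitary groups together with the explicit A-packet labelings of §\ref{S:ec}. At split places the claim is forced by LLC. At nonsplit finite places in the generic case, Howe duality together with the known description of how theta correspondence transports component-group characters produces the pair $(\eta_v,\mathbf 1)$ claimed in (b)(i), and the analogous cohomological induction recipe handles the archimedean generic case (c)(i). Global non-vanishing and irreducibility, once all local lifts are nonzero, follow from Proposition \ref{P:ee} combined with the multiplicity-one statement Corollary \ref{c:mf}.

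The main obstacle is the dichotomy at the degenerate places, namely (b)(ii) and (c)(ii). When $\phi_v=\check\gamma_v\oplus\check\gamma_v^{-1}$ at a nonsplit finite place, or when the archimedean numerical condition $t=0$, $k$ even, $k\leq|2s|$ holds, the source representation on $\uu(F_v)$ sits at the boundary of the Howe tower for the pair $(\uu,\bu(3))$, so the local theta lift must vanish on one component of the local A-packet by a conservation/first-occurrence argument; only the label with $\eta_v=\mathbf 1_{\mathcal S_{(\phi_\chi)_v}}$ survives. Establishing this precise vanishing pattern, and in particular the sharp archimedean threshold $k\leq|2s|$, is the delicate step and requires explicit $(\mathfrak g,K)$-module computations for the theta correspondence from $\uu(\rr)$ to $\bu(3)(\rr)$.
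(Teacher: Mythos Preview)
Your first step contains a genuine gap. Section \ref{S:hm} does \emph{not} realize $I_2(\tau)$ as a theta lift for a unitary--unitary dual pair $(\bu(V),\bu(W))$ with $W$ a two-dimensional skew-Hermitian space. It realizes $I_2(\tau)$ via the accidental isomorphism $j:\mathrm{PGU}(2,2)\simeq\mathrm{PGSO}(4,2)$ as (the restriction of) the \emph{symplectic--orthogonal} theta lift of $\tilde\tau=(\tau\boxtimes\chi)|_{\gll_{2,\aaf}^+}$ from $\gll_2^+$ to $\mathrm{GO}(4,2)$. In fact the A-parameter $\psi=\phi[2]$ of $I_2(\tau)$ is not of the shape produced by the $\bu(2)\to\bu(4)$ theta lift of a tempered $\tau^\flat$: that lift sends a tempered parameter $\psi'$ to (a twist of) $\psi'\boxplus\chi'[2]$, which is never $\phi[2]$ when $\phi$ is of type~1 and does not match in type~2 either. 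Hence your seesaw with $\bu(V_1)\times\bu(V_2)\subset\bu(V)$ versus the diagonal $\bu(W)\subset\bu(W)\times\bu(W)$ never gets off the ground.

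The paper instead uses the seesaw
\[
\xymatrix{
   \mathrm {GO}(4,2)   \ar@{-}[d]   & &   \guu^+ \ar@{-}[d]     \\
   \mathrm{GU}(3)   \ar@{-}[urr]     & &   \gll_2^+\ar@{-}[ull]   \\
}
\]
with $\mathrm{GU}(3)\hookrightarrow\mathrm{GO}(4,2)$ coming from $F^6\simeq E^3$, and shows (Lemma~\ref{surp}) that this embedding is compatible with $j$ and with the chosen $i:\bu(1)\times\bu(3)\hookrightarrow\uuuu$. The output (Proposition~\ref{long}) is a \emph{backwards} characterization: an irreducible $\sigma\subset L^2_{\mathcal M_\psi(\check\gamma)}$ lies in $\mathcal M_{I_2(\tau)}(\gamma)$ if and only if $\theta^{\chi\check\gamma^{-1}}(\sigma\otimes\gamma\circ\det)=\tau$. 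The explicit local description in (a)--(c) then follows by quoting the known local Howe correspondence for $(\uu,\bu(3))$ and for $\gll_n$ (the references listed at the start of \S\ref{S:mt}); no fresh $(\mathfrak g,K)$-module computation is needed for the archimedean dichotomy.

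Two further points. First, square-integrability of $\mathcal M_{I_2(\tau)}(\gamma)$ is not automatic from a seesaw identity; the paper proves it separately by constant-term analysis along $U_1$ and $U_2$ (\S5.1) before invoking Proposition~\ref{P:ee}. Second, your nonvanishing argument is incomplete: Proposition~\ref{P:ee} only identifies the A-parameter, and Corollary~\ref{c:mf} concerns $L^2_\psi$ on $\bu(4)$, not $L^2_{\mathcal M_\psi(\check\gamma)}$ on $\bu(3)$. The paper reduces global nonvanishing of the relevant theta lift to local nonvanishing via Yamana's L-function criterion (Proposition~\ref{P:thet}), using the pole of $\zeta_E(s)$ at $s=1$ together with the hypothesis $\phi\neq\check\gamma\boxplus\check\gamma^{-1}$.
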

\begin{rem}\label{emb}
\begin{itemize}
\item $\mathcal M_{I_2(\tau)}(\gamma)$ does not depend on the choice of $V_1$ (and $V_2$).
We have it by the following two facts immediately:
\begin{enumerate}
\renewcommand{\labelenumi}{(\alph{enumi})}
\item $I_2(\tau)$ is stable under the conjugation of $\guuuu(F)$ (see \S \ref{S:hm}).
\item For any other pair $(V'_1,V'_2)$, we can take $g\in\guuuu(F)$ such that ${{\rm Im}i_{V'_1,V'_2}}=g^{-1}{{\rm Im}i_{V_1,V_2}}g.$ 
\end{enumerate}
We can obtain (b) as follows.
We fix vectors $0\neq v \in V_1$, $0\neq v' \in V'_1$ and put $\langle v,v\rangle=a\in E\baz$
$\langle v',v'\rangle=a'\in E\baz$, where $\langle \cdot,\cdot\rangle$ is the Hermitian form on $\mathbb H^2_E$.
Then, we can take $g_1\in \guuuu(F)$ such that $\langle g_1\cdot,g_1\cdot\rangle=\frac{a}{a'}\langle \cdot,\cdot\rangle$ since $\mathbb H^2_E$ is hyperbolic.
By Witt's theorem, there is an element $g_2$ of $\uuuu(F)$ such that $g_2g_1v'=v$.
The element $g=g_2g_1$ of $\guuuu(F)$ is what we want.
\item We can regard that Atobe's paper \cite{AtoH:2015} treats the nonvanishing problem of Miyawaki lifts for $\bu(2)$ and $\bu(2)$ with respect to Hermitian Maass lifts. 
\end{itemize}
\end{rem}
\section{Hermitian Maass lifts}\label{S:hm}
In the classical setting, the Hermitian Maass lift means the Hermitian analogue of the Saito-Kurokawa lift \cite{kojima1982arithmetic}.
Following this, we call the Ikeda lift of degree 2 the Hermitian Maass lift.
The aim of this section is to describe representation-theoretical Hermitian Maass lifts in terms of theta lifts.
It is well-known in the classical setting (see \cite{AtoH:2015}).

\subsection{Overview}\label{her-ov}
(Some undefined symbols which appear in the following sentences are defined in the bottom of this subsection.)
Let $F$ be a number field.
Let $n=2$ and let $\pi, \phi, \chi, \phi_\chi, \psi$ be as in \S\ref{ss:mt}.
Let $\tau=\otimes_v\pi((\phi_\chi)_v,\eta_v)$ (implicitly we fix a Whittaker datum of $\bu(2)$ as in \S\ref{ss:mt}).
We put $\tilde \tau=(\tau\boxtimes\chi)|_{\gll^+_{2,\aaf}}$, where the representation $\tau\boxtimes\chi$ of $\uu(\aaf)\times\aae\baz$ is regarded as a representation of $\guu^+_{\aaf}$ by the equation $\guu^+_{\aaf}=\aae\baz\uu(\aaf)$. We regard $\tilde \tau$ as a space of functions on $\gll_{2,\aaf}^{+}$  which are left
$\gll_{2,F}^+$-invariant.
Let $\theta^2(\tilde \tau)$ be the theta lift of $\tilde \tau$ to $\go$ (see \S\ref{SS: tl}).
It is easy to verify that $\theta^2(\tilde \tau)$ is nonzero and square integrable modulo center by the results about theta lifts for isometry groups \cite[Proposition 10.1]{yamana2014functions}.
Since $\theta^2(\tilde \tau)$ has the trivial central character (see \S\ref{SS: tl}), we can define $\mathcal H(\tau)=\mathcal H(\tau,\chi)$ as the image of the following map
\begin{align*}
\theta^2(\tilde \tau)&\map L^2(\uuuu(F)\backslash\uuuu(\aaf)),\\
f&\mapsto (f\circ j)|_{\uuuu(\aaf)}.
\end{align*}
Here, $j$ is the isomorphism $j:{\rm PGU}(2,2)\simeq{\rm PGSO}(4,2)$ (see \S\ref{her-isom}) and we regard $f\circ j$ as an automorphic form of $\guuuu(\aaf)$ with the trivial central character.
We note that $\mathcal H(\tau)$ is nonzero because  $\theta^2(\tilde \tau)$ is nonzero.
In this section, we will prove the following:
\begin{theo}\label{T: her}
$\mathcal H(\tau)=I_2(\tau)$.
\end{theo}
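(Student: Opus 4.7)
The plan is to prove Theorem~\ref{T: her} by combining a global identification of the A-parameter with a place-by-place matching of components, invoking multiplicity freeness of $L^2_\psi$ (Corollary~\ref{c:mf}) to conclude the equality of $\mathcal H(\tau)$ and $I_2(\tau)$ as subrepresentations of $L^2(\bu(2,2)(F)\backslash\bu(2,2)(\aaf))$.

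First, I would show $\mathcal H(\tau)\subset L^2_\psi$. The representation $\tilde\tau$ on $\gll^+_{2,\aaf}$ is built from $\tau\boxtimes\chi$, whose base change to $\gll_2(\aae)$ agrees with $\phi_\chi=\phi\otimes(\chi\circ\det)$; in particular, the underlying parameter is $\phi$ up to the twist by $\chi$. The local Howe correspondence for the theta lift to $\go$, transported via the exceptional isomorphism $j:{\rm PGU}(2,2)\simeq{\rm PGSO}(4,2)$, converts this into the A-parameter $\phi_v[2]=\psi_v$ at every place. One verifies this at unramified places by a Satake-parameter computation and then promotes it globally by strong multiplicity one for $\gll_4(\aae)$, concluding that the global A-parameter of $\mathcal H(\tau)$ is $\psi$.

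Second, by Corollary~\ref{c:mf} it suffices to show $\mathcal H(\tau)_v=\pi(\psi_v,\eta_v)$ at each place $v$, where $\eta_v$ is the character attached to $\tau_v=\pi((\phi_\chi)_v,\eta_v)$ under the canonical identification $\overline{\mathcal S}_{(\phi_\chi)_v}\simeq\overline{\mathcal S}_{\psi_v}$. At split places the identification reduces to a direct calculation with degenerate principal series. At nonarchimedean inert places, Lemma~\ref{L:fr} tells us that every element of $\Pi_{\psi_v}$ is irreducible, so the composition ``local theta lift $\circ\;j$'' picks out a single member $\pi(\psi_v,\eta'_v)$ of the A-packet, and one identifies $\eta'_v$ with $\eta_v$ by tracking Whittaker/Bessel functionals through $j$ and the Howe correspondence. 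At archimedean places the analogous argument uses the cohomological-induction description of $\Pi_{\psi_v}$ alluded to in \S\ref{det}.

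The main obstacle is the labeling step: verifying that the character $\eta'_v$ produced by the local theta correspondence---depending on the chosen additive character $\psi_F$, the trace-zero element $\kappa$, and the positivity convention at complex places---agrees with the character assigned to $\tau_v$ by the Whittaker datum $\mathfrak w=(B_2,\lambda_{\psi_F})$ fixed in \S\ref{ss:mt}. This requires a careful dictionary between Whittaker data on the two sides of $j$ together with explicit formulas for how the local Howe correspondence acts on component-group characters for the relevant similitude dual pair, and it is at archimedean places that the positivity assumption on $\psi_{F_v}$ for $E_v\simeq\cc$ becomes essential to pin down signs.
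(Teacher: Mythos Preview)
Your global strategy coincides with the paper's: establish $\mathcal H(\tau)\subset L^2_\psi$ by an unramified computation (this is Proposition~\ref{P:near}), invoke Corollary~\ref{c:mf}, and reduce to a local identification $\mathcal H(\tau)_v=\pi(\psi_v,\eta_v)$ (this is Proposition~\ref{T:locikeda}). So the architecture is right.

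The gap is in the labeling step, which you correctly flag as the crux but for which your proposed mechanism is not the one that works here. You suggest distinguishing $\pi(\psi_v,+1)$ from $\pi(\psi_v,-1)$ by ``tracking Whittaker/Bessel functionals through $j$ and the Howe correspondence.'' On the $\bu(2)$ side this is fine: $\Pi_{(\phi_\chi)_v}$ is tempered and the generic member carries the trivial character. But on the $\bu(2,2)$ side the packet $\Pi_{\psi_v}$ is an \emph{A}-packet of Arthur type $\phi_v[2]$; its members are not generic, and for type~2-1 one of them is supercuspidal while the other is a Langlands quotient. Whittaker functionals do not separate them, and a Bessel-model argument would need substantial additional input that you do not supply.

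The paper's approach to the labeling is entirely different and concrete. At nonarchimedean inert places with $\phi_v$ of type~2-1 or~2-2 it uses a see-saw with the pair $(\mathrm{GU}(1),\mathrm{GO}(2,0))$ (Lemma~\ref{L:first}) to compute $\theta^0(\tilde\tau_+)=\mu_1$, hence by the conservation relation $\theta^1(\tilde\tau_-)=0$; this first-occurrence information, combined with the explicit local theta results of Atobe--Gan on the orthogonal side, pins down the $L$-parameters of $\theta^2(\tilde\tau_\pm)$ after transport via $j$, and one reads off the sign. At archimedean places the paper uses Moeglin--Renard's cohomological-induction description of $\Pi_{\psi_v}$ (Lemma~\ref{mr}) together with Paul's correspondence of $K$-types to exhibit specific minimal $K$-types in $\theta^2(\tilde\tau_\pm)$ that lie in the Blattner sets $S^+_k$ or $S^-_{k,\sigma}$; this again determines the sign. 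Neither step involves Whittaker or Bessel models on the $\bu(2,2)$ side.
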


We introduce some additional notations.
Let $F$ be a global or local field.
We put $$S_0=
\begin{pmatrix}
 2&0\\
 0&-2d\\
\end{pmatrix}$$
and  
$$S_m=
\begin{pmatrix}
 &&1\\
 &S_{m-1}&\\
 1&&\\
\end{pmatrix}$$
for $m>0$ inductively (we recall $d=\kappa^2$).
We define some algebraic groups over $F$ as follows:
\begin{align*}
\mathrm {GO}(m+2, m)&=
\{g \in \gll_{2m+2} \ | \ {}^t\! gS_mg=\nu'_{m}(g)S_m, \nu'_{m}(g)\in \gll_1 \},\\
\mathrm {O}(m+2, m)&=\mathrm {ker}(\nu'_{m}),\\
\mathrm {GSO}(m+2, m)&=
\{g \in \mathrm {GO}(m+2, m) \ | \det g= \nu'_{m}(g)^{m+1} \},\\
\mathrm {SO}(m+2, m)&=\mathrm {O}(m+2, m) \cap\mathrm {GSO}(m+2, m).
\end{align*}
We often denote $\nu'_{m}$ by $\nu$ for short.
We define 
a subgroup $B^{GO}_m$ of $\mathrm {GO}(m+2, m)$ by
$$B^{GO}_m=\left\{\begin{pmatrix}
 \gll_{1}&*&*&*&*&*&*\\
 &\ddots&*&*&*&*&*\\
 &&\gll_{1}&*&*&*&*\\
 &&&\mathrm {GO}(2,0)&*&*&*\\
 &&&&*&*&*\\
 &&&&&\ddots&*\\
 &&&&&&*
\end{pmatrix}\right\} \subset \mathrm {GO}(m+2, m).
$$
Furthermore, we define 
$B^{O}_m,B^{GSO}_m,B_m^{SO}$ of $\mathrm {O}(m+2, m), \mathrm {GSO}(m+2, m), \mathrm {SO}(m+2, m)$ respectively by
\begin{align*}
B^{O}_m&=B^{GO}_m\cap\mathrm {O}(m+2, m),\\
B^{GSO}_m&=B^{GO}_m\cap \mathrm {GSO}(m+2, m),\\
B^{SO}_m&=B^{GO}_m\cap\mathrm {SO}(m+2, m).
\end{align*}
When $F$ is a local field, we denote by $\tau_1\times\tau_2\times\cdots\times\tau_{k}\rtimes\pi_0$ a normalized parabolic induction
$${\rm Ind}_{P^{GO}}^{\mathrm {GO}(m+2, m)}\tau_1\boxtimes\tau_2\boxtimes\cdots\boxtimes\tau_k\boxtimes\pi_0,$$
where
\begin{itemize}
\item $l_1, \dots ,l_k$ are positive integers such that $m_0=m-\sum_i l_i\geq 0$,
\item $P^{GO}$ is the parabolic subgroup of $\mathrm {GO}(m+2, m)$ containing $B^{GO}_{m}$ whose Levi subgroup is isomorphic to $\prod_i \gll_{l_i}\times \mathrm {GO}(m_0+2,m_0)$,
\item $\tau_1,\dots,\tau_k,$ and $\pi_0$ are representations of $\gll_{l_1},\dots,\gll_{l_k},$ and $\mathrm{GO}(m_0+2,m_0)$, respectively,
\item $\tau_1\boxtimes\tau_2\boxtimes\cdots\boxtimes\tau_k\boxtimes\pi_0$ is regarded as a representation of $P^{GO}$ through the map
\begin{alignat*}{3}
P^{GO} \quad\quad\quad\quad&\surj \prod_i \gll_{l_i}\times \mathrm {GO}(m_0+2,m_0),\\
\begin{pmatrix}
g_1&*&*&*&*&*&*\\
&\ddots&*&*&*&*&*\\
&&g_k&*&*&*&*\\
&&&g_0&*&*&*\\
&&&&*&*&*\\
&&&&&\ddots&*\\
&&&&&&*
\end{pmatrix}
&\mapsto(g_1,\dots,g_k,g_0).
\end{alignat*}
\end{itemize}
For $\mathrm {GSO}(m+2, m)$,$\mathrm {O}(m+2, m)$, and $\mathrm {SO}(m+2, m)$, we define $\tau_1\times\tau_2\times\cdots\times\tau_{k}\rtimes\pi_0$ similarly.

When $F$ is a local field, let 
$$\guu^+=\guu^+_{F}=\ker(\epsilon_{E/F}\circ\nu)\subset \guu(F), \ \gll^+_{2}=\gll^+_{2,{F}}=\guu^+_{F}\cap\gll_2(F),$$
where $\ep$ is the quadratic character of $F^{\times}$ which corresponds to $E/F$ (resp. the trivial character of $F^{\times}$)  if $E$ is field (resp. $E=F\times F$).
When $F$ is a global field, let
$$\guu^+_{\aaf}=\guu(\aaf)\cap\prod_v\guu^+_{F_v}, \ \gll_{2,\aaf}^+=\guu^+_{\aaf}\cap\gll_2(\aaf).$$

\subsection{Accidental isomorphism ${\rm PGSO}(4,2)\simeq{\rm PGU}(2,2)$}\label{her-isom}
We introduce the isomorphism $j:{\rm PGU}(2,2)\simeq{\rm PGSO}(4,2)$.
We basically follows \cite{morimoto2014theta}, but there are a few different notations.

Let $$V=\{B(x)\ | \ x=(x_1,x_2,x_3,x_4,x_5,x_6)\in F^{6}\},$$ where
$$B(x)=\begin{pmatrix}
 0&-x_1&x_2&-x_3+x_4\kappa\\
 x_1&0&x_3+x_4\kappa&x_5\\
 -x_2&-x_3-x_4\kappa&0&x_6\\
 x_3-x_4\kappa&-x_5&-x_6&0
 \end{pmatrix}.$$
We define a quadratic form $\langle \cdot,\cdot\rangle_V$ on $V$ by
$$\langle B(x),B(x')\rangle_V={\rm Tr}(B(x)J_4 ^tB(x')^c J_4), \ x=(x_i), \ x'=(x'_i)\in F^6.$$
 Then,
  \begin{align*}\langle B(x),B(x')\rangle_V&=-2(2x_3'x_3-2dx_4x'_4+x_1x'_6+x_6x'_1+x_2x'_5+x_5'x_2)\\
 &=-2xS_2 \!^tx'
 \end{align*}
 so that ${\rm GO}(V,\langle,\rangle_V)\simeq{\rm GO}(4,2)$.
It is known that there is a constant $\alpha\in E_v\baz$ such that $\alpha g B^t\!g\in V$ for each $g\in{\rm GU}(2,2)$ and $B\in V$.
Therefore the map
\begin{align*}{\rm PGU}(2,2)&\map {\rm PGSO}(V,\langle,\rangle_V)\\
 g&\mapsto[B\mapsto \alpha g B^t\!g]
 \end{align*}
is a well-defined.
Furthermore, it is isomorphism according to \cite[2.1]{morimoto2014theta}.
We denote the map 
 $${\rm PGU}(2,2)\bij{\rm PGSO}(V,\langle,\rangle_V)\bij{\rm PGSO}(4,2)$$
 by $j$.
From now on, we often identify objects on ${\rm PGU}(2,2)$ or ${\rm PGSO}(4,2)$ with the pullbacks of them on ${\rm GU}(2,2)$ or ${\rm GSO}(4,2)$.
For example, 
\begin{itemize}
\item an automorphic form of ${\rm PGU}(2,2)(\aaf)$ is regarded as an automorphic form of ${\rm GU}(2,2)(\aaf)$ with trivial central character,
\item a representation of ${\rm GSO}(4,2)(F_v)$ with the trivial central character is regarded as a representation of ${\rm PGSO}(4,2)(F_v)$,
\end{itemize} 

 For the following sections,  we write the images of some elements of $\guuuu$ explicitly: 
 \begin{align*}
 j\left(
 \begin{pmatrix}
 1&&&\\
 &1&&\\
 &&\lambda&\\
 &&&\lambda
 \end{pmatrix}
 \right)&=\begin{pmatrix}
 \lambda^{-1}&&&&&\\
 &1&&&&\\
 &&1&&&\\
 &&&1&&\\
 &&&&1&\\
 &&&&&\lambda
 \end{pmatrix},&&\lambda\in\gll_1,\\
 j\left(
 \begin{pmatrix}
 \alpha&&&\\
 &\beta&&\\
 &&(\beta^c)^{-1}&\\
 &&&(\alpha^c)^{-1}
 \end{pmatrix}
 \right)&=\begin{pmatrix}
N(\alpha\beta)&&&&\\
&N(\alpha)&&&\\
&&\alpha^c\beta&&\\
&&&N(\beta)&\\
&&&&1
 \end{pmatrix},&&\alpha,\beta\in\res\gll_{1},\\
  j\left(
 \begin{pmatrix}
 \alpha&&\\
 &A&\\
 &&(\alpha^c)^{-1}\det A
 \end{pmatrix}
 \right)&=\begin{pmatrix}
 ({\rm N}_{E/F}(\alpha)\det A^{-1})A&&\\
 &\alpha^c&\\
 &&A
 \end{pmatrix},&&\alpha\in\res\gll_{1}, A\in\gll_2.
 \end{align*}
 (We often identify $\alpha=a+b\kappa\in \res\gll_{1}$ with 
 $\left(
\begin{smallmatrix}
 a&db \\
 b&a 
\end{smallmatrix}
\right)\in{\rm GSO}(2,0)$.)
\subsection{Theta lifts}\label{SS: tl}
In this subsection, we introduce theta lifts.
There are some styles  to define the Weil representation.
To avoid confusion, we always adopt the one used in \cite{gan2016gross}, which is a slightly modified version of what is used in \cite{kudla1994splitting}.

 We put $\psi_{E}=\psi_{F}\circ {\rm Tr}_{E/F}$. For $k,m\in\zz_{\geq0}$, let 
$$R_m=\{(g,h)\in\gll_2\times\mathrm {GO}(m+2,m) \ | \ \nu(g)=\nu(h)\}$$
 and
$$R'_k=\{(g,h)\in\guu\times\mathrm {GU}(2k+1) \ | \ \nu(g)=\nu(h)\}.$$

\paragraph{Symplectic-orthogonal case.}
Firstly, we define the local Weil representation and theta lifts.
Let $F$ be a local field.
Then, we define the Weil representation $\omega^m=\omega^m_{\psi_F}$ of $\sll_2\times\mathrm {O}(m+2, m)$ on $\mathcal S(F^{2m+2})$ by
\begin{align*}
\omega^m(1, h)\varphi(x) &=\varphi(h^{-1}x), && h\in \mathrm {O}(m+2, m),\\
\omega^m\left(
\left(
\begin{smallmatrix}
 a&0 \\
 0&a^{-1} 
\end{smallmatrix}
\right)
, 1\right)\varphi(x)&= {\epsilon_{E/F}}(a)|a|_{F}^{m+1}\varphi(ax), && a\in F\baz ,\\
\omega^m\left(
\left(
\begin{smallmatrix}
 1&b \\
 0&1 
\end{smallmatrix}
\right)
, 1\right)\varphi(x)&= \varphi(x)\psi_{F}(\tfrac{1}{2}b{}^t\!xS_mx), && b\in F ,\\
\omega^m\left(
\left(
\begin{smallmatrix}
 0&-1 \\
 1&0 
\end{smallmatrix}
\right)
, 1\right)\varphi(x)
&= \gamma_{E/F}^{-1}\int_{ F^{2m+2}}\varphi(-y')\psi_{F}({}^t\! xS_my')dy'
\end{align*}
for $\varphi\in \mathcal S(F^{2m+2})$ and $x\in F^{2m+2}$.
Here, $dy$ is the selfdual  Haar measure and $\gamma_{E/F}$ is the Weil constant, a 8-th root of unity (see \cite[3]{kudla1994splitting}).
Furthermore, by using the normalized left transition operator $L$ of $\mathrm {GO}(m+2,m)$ on $\mathcal S(F^{2m+2})$ such that
\begin{align*}L(h)\varphi(x)=|\nu(h)|^{-(m+1)/2}_{F}\varphi(h^{-1}x)&&h\in\mathrm {GO}(m+2,m),\end{align*}
we extend the Weil representation $\omega^{m}$ to $R_m$ by
$$\omega^{m}(g,h)=\omega^{m}(g\left(
\begin{smallmatrix}
 1&0 \\
 0&\nu(h)^{-1} 
\end{smallmatrix}
\right),1)\circ L(h)$$
for $(g,h)\in R_k(F)$ (see \cite{roberts1996theta}) and put 
$$\Omega^{m}=\Omega^{m}_{\psi_F}={\rm ind}_{R_m}^{\gll_{2}^+\times\mathrm {GO}(m+2,m)}\omega^{m},$$
where we denote by $\rm ind$ the compact induction.
If $\tau'$ is an irreducible admissible representation  of $\mathrm \sll_2$ (resp. $\gll_{2}^+$), the maximal $\tau'$-isotypic quotient of $\omega^{m}$ (resp. $\Omega^m$) has the form
$$\tau'\boxtimes\Theta^m(\tau'),$$
where $\Theta^m(\tau')$ is a smooth representation of  $\mathrm \sll_2$ (resp. $\gll_{2}^+$).
It is known that $\Theta^m(\tau')$ has finite length (see \cite{kudla1986local} and \cite[Lemma 3.1]{gan2009restrictions}).
Moreover, the maximal semisimple quotient $\theta^m(\tau')$ of $\Theta^m(\tau')$ is irreducible if $\Theta^m(\tau')\neq 0$ (Howe duality, see \cite{howe1989transcending}, \cite{wahawe}, \cite{gan2016proof} and \cite{roberts1996theta}).
We note that the central character of $\Theta^m(\tau')$ is $\chi_{\tau'}^{-1}\epsilon_{E/F}$ for an irreducible representation $\tau'$ of $\gll_{2}^+$, where $\chi_{\tau'}$ is the central character of $\tau'$.
Furthermore, by the conservation relation \cite[Theorem 1.10, Theorem 7.6]{sun2015conservation} and \cite[Lemma 3.1]{gan2009restrictions}, the following holds immediately.
\begin{lemm}\label{l:coms}
Let $\tau'$ be an irreducible admissible representation  of $\gll_{2}^+$ and $m\geq2$.
Then, $\theta^m(\tau')|_{{\rm GSO}(m+2,m)}$ is nonzero and irreducible. 
\end{lemm}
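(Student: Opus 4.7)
The plan is to establish the nonvanishing and the irreducibility of the $\mathrm{GSO}$-restriction as separate steps. For the nonvanishing, I would reduce via the compact induction $\Omega^m=\mathrm{ind}_{R_m}^{\gll_2^+\times\mathrm{GO}(m+2,m)}\omega^m$ to showing that the ordinary Weil-representation theta lift $\Theta^m(\pi)$ is nonzero for some irreducible $\sll_2$-subquotient $\pi$ of $\tau'|_{\sll_2}$ (such a $\pi$ exists because the restriction has finite length). I would handle this via the conservation relation of Sun--Zhu \cite[Theorem 1.10, Theorem 7.6]{sun2015conservation} applied to the two companion Witt towers with fixed anisotropic kernel of dimension two: the sum of the first-occurrence indices in the two towers is bounded by a small explicit constant depending only on the rank of $\sll_2$, which forces the first occurrence in our tower $\{\mathrm{O}(k+2,k)\}_{k\geq 0}$ to happen at an index at most $m=2$. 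Combining this with the Howe tower property then gives $\Theta^m(\pi)\neq 0$, hence $\Theta^m(\tau')\neq 0$, for every $m\geq 2$. Howe duality upgrades this to the assertion that $\theta^m(\tau')$ is a nonzero irreducible representation of $\mathrm{GO}(m+2,m)$, so its restriction to the index-two subgroup $\mathrm{GSO}(m+2,m)$ is automatically nonzero.

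For the irreducibility of the restriction, my plan is to use the Clifford-theoretic dichotomy: since $[\mathrm{GO}(m+2,m):\mathrm{GSO}(m+2,m)]=2$, the restriction $\theta^m(\tau')|_{\mathrm{GSO}(m+2,m)}$ is irreducible if and only if $\theta^m(\tau')\not\simeq\theta^m(\tau')\otimes\mathrm{sgn}$, where $\mathrm{sgn}$ denotes the nontrivial character of $\mathrm{GO}/\mathrm{GSO}$. To rule out the isomorphism, I would realize $\theta^m(\tau')\otimes\mathrm{sgn}$ as a theta lift from $\gll_2^+$ obtained by conjugating the extended Weil representation $\Omega^m$ by an element $t\in\mathrm{GO}(m+2,m)\setminus\mathrm{GSO}(m+2,m)$, and then track this conjugation through the $R_m$-induction (in the spirit of \cite[Lemma 3.1]{gan2009restrictions}) so as to identify its effect on the $\gll_2^+$-side as an explicit outer twist that changes isomorphism classes of irreducible admissible representations. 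Howe duality then forces the two $\mathrm{GO}$-sides to be non-isomorphic, yielding irreducibility.

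The main obstacle I expect is the second step: pinning down the exact outer-twist correspondence on the $\gll_2^+$-side and verifying that it truly separates $\tau'$ from its twisted counterpart for an arbitrary irreducible admissible $\tau'$. The first step, by comparison, is largely a bookkeeping application of well-known conservation-relation and Howe-tower inputs.
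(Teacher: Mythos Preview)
Your nonvanishing argument is fine and matches the paper's use of the conservation relation together with \cite[Lemma~3.1]{gan2009restrictions} (the latter transfers first-occurrence statements from the isometry pair $\sll_2\times\mathrm{O}(m+2,m)$ to the similitude pair $\gll_2^+\times\mathrm{GO}(m+2,m)$).

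The gap is in your irreducibility step. Conjugating $\Omega^m$ by an element $t\in\mathrm{GO}(m+2,m)\setminus\mathrm{GSO}(m+2,m)$ cannot produce anything new: $\Omega^m$ is already a representation of $\gll_2^+\times\mathrm{GO}(m+2,m)$, so $(1,t)$ acts and the conjugated module is canonically isomorphic to $\Omega^m$. What you actually need to compare is $\Omega^m$ with $\Omega^m\otimes(1\boxtimes\mathrm{sgn})$, and on the isometry level this restricts to $\omega^m$ versus $\omega^m\otimes\det_{\mathrm O}$. These are \emph{not} related by any twist or outer automorphism on the $\sll_2$-side, so one cannot realise $\theta^m(\tau')\otimes\mathrm{sgn}$ as $\theta^m(\tau'')$ for some companion $\tau''$ and then invoke injectivity of Howe duality as you propose.

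The paper's route (implicit in the two citations) is to use the conservation relation a second time, now in the direction $\mathrm O\to\mathrm{Sp}$. Via \cite[Lemma~3.1]{gan2009restrictions} one reduces to showing $\sigma\not\simeq\sigma\otimes\det$ for $\sigma=\theta^m(\pi)$ with $\pi$ an irreducible constituent of $\tau'|_{\sll_2}$. Since $\sigma$ lifts back to $\sll_2=\mathrm{Sp}_2$, its first-occurrence index in the symplectic tower is at most $2$; the conservation relation \cite[Theorems~1.10, 7.6]{sun2015conservation} then forces the first occurrence of $\sigma\otimes\det$ to be at least $\dim V_m-2=2m$, which for $m\ge 2$ exceeds $2$. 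Hence $\sigma\otimes\det$ does \emph{not} occur in the correspondence with $\sll_2$, whereas $\sigma$ does, so $\sigma\not\simeq\sigma\otimes\det$ and the $\mathrm{GSO}$-restriction is irreducible. Note that this is exactly the place where the hypothesis $m\ge 2$ is used; your proposed mechanism gives no indication of why $m=1$ should behave differently.
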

 
Secondary we define the global Weil representation and theta lifts  (we define them only for the similitude case for simplicity). Let $F$ be a number field and let $\omega^{m}=\otimes_{v}\omega^{m}_{F_v}$. For $\varphi\in \mathcal S(\aaf^{2m+2})$ and a cusp form $f$ of $\gll_{2,\aaf}^+$ (i.e. a  left $\gll_{2,F}$-invariant function on $\gll_{2,\aaf}^+$ such that $\smallint_{F\backslash\aaf}f(\left(
\begin{smallmatrix}
 1&b \\
 &1 
\end{smallmatrix}
\right)
g)db=0$ for each $g\in\gll_{2,\aaf}$), let 
\begin{align*}\theta^m_\varphi(g,h)=\sum_{\gamma\in F^{2m+2}}\omega^{m}(g,h)\varphi(\gamma)&&(g,h)\in R_m(\aaf)
\end{align*}
and
\begin{align*}\theta^m_\varphi(f)(h')=\int_{\sll_2(F)\backslash\sll_2(\aaf)}\theta^m_\varphi(g_0g', h')\overline {f(g_0g')}dg_0&&h'\in  \mathrm {GO}(m+2, m)(\aaf),
\end{align*}
where $g'\in \gll_{2,\aaf}^+$ satisfies $\nu(h')=\nu(g')$ (we note that the integral does not depends on the choice of $g'$).
For an irreducible cuspidal representation $\tau'$ of $\gll_{2,\aaf}^+$, let $\theta^m(\tau')$ be the representation of $\mathrm {GO}(m+2, m)(\aaf)$ generated by $\{\theta^m_\varphi(f)\ | \ \varphi\in \mathcal S(\aaf^{2m+2}),f\in\tau'\}$.
We note that $\theta^m(\tau')\simeq \otimes_v\theta^m(\tau'_v)$ if $\theta^m(\tau')$ is square integrable modulo center.

For the next section, we note ``tower property''.
Our claim is for similitude groups, we can proof it similarly to that for isometry groups \cite{rallis1984howe}.
\begin{lemm}\label{l:tow}
Let $m'$ be a non-negative integer such that $m'\leq m$.
We denote by $Q$ the parabolic subgroup of ${\rm GO}(m+2,m)$ such that $B^{GO}_m\subset Q$ and the Levi subgroup $M_Q$ of $Q$ is isomorphic to $\gll_m'\times{\rm GO}(m-m'+2,m-m')$.
Then, for each $\varphi\in \mathcal S(\aaf^{2m+2})$ and cusp form $f$ of $\gll_{2,\aaf}^+$, the constant term 
$\theta_\varphi^m(f)_{N_Q}$ of $\theta_\varphi^m(f)$ along the unipotent radical $N_Q$ of $Q$ satisfies
$$\theta^m_\varphi(f)_{N_Q}(h')=\int_{\sll_2(F)\backslash\sll_2(\aaf)}\sum_{\gamma\in F^{2(m-m')+2}}\int_{\aaf^{m'}}(\omega^m (g_0g',h')\varphi)\left(
\begin{pmatrix}
R\\
\gamma\\
0
\end{pmatrix}
\right)dR \overline{f(g_0g')}dg_0$$
for $h'\in {\rm GO}(m+2,m)(\aaf)$, where $g'$ is taken as above.
Especially,
$$\theta^m_\varphi(f)_{N_Q}(ul)=|\det l_1||\nu (l_0)|^{-m'/2}\int_{\aaf^{m'}}\theta^{m-m'}_{\varphi_R}(f)(l_0)dR$$
for $u\in N_Q(\aaf)$ and $l={\rm diag}(l_1,l_0,*)\in M_Q(\aaf)$ ($l_1\in\gll_{m'}(\aaf),\ l_0\in{\rm GO}(m-m'+2,m-m')(\aaf)$), where
we define $\varphi_R$ by $x\mapsto \varphi\left(\left(
\begin{smallmatrix}
R\\
x\\
0
\end{smallmatrix}
\right)\right)$ for $x\in\aaf^{2(m-m')+2}$
.
\end{lemm}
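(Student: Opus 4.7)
The plan is to carry out the standard tower computation of Rallis~\cite{rallis1984howe}, adapted to the similitude setting. First I would unfold the definition of the constant term and interchange the inner integral over $\sll_2(F)\backslash\sll_2(\aaf)$ with the constant-term integral over $N_Q(F)\backslash N_Q(\aaf)$, reducing the problem to analyzing
$$\int_{N_Q(F)\backslash N_Q(\aaf)} \sum_{\gamma\in F^{2m+2}} \omega^m(g_0 g',nh')\varphi(\gamma)\,dn.$$
Since $\nu(n)=1$ for $n\in N_Q$, the similitude extension formula $\omega^m(g,h)=\omega^m(g\,\mathrm{diag}(1,\nu(h)^{-1}),1)\circ L(h)$ gives $\omega^m(g_0 g',nh')=\omega^m(1,n)\omega^m(g_0 g',h')$, so after absorbing $\omega^m(g_0 g',h')\varphi$ into $\varphi$ I am reduced to computing the $N_Q$-action alone on Schwartz functions in $\mathcal S(\aaf^{2m+2})$.

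Next I would decompose $N_Q$ according to the flag $W\subset W^\perp\subset F^{2m+2}$, where $W$ is the totally isotropic $m'$-dimensional subspace stabilized by $Q$. Writing vectors as $(R,\gamma,S)^t$ with $R\in F^{m'}$, $\gamma\in F^{2(m-m')+2}$, $S\in F^{m'}$, the unipotent radical $N_Q$ is generated by Heisenberg-type elements fixing $(\gamma,S)$ and translating $R$, together with elements of $\mathrm{Hom}(W^\perp/W,W)$ which, through $\omega^m(1,\cdot)$, act on $\varphi(R,\gamma,S)$ by multiplication by $\psi_F$ of a linear form in $S$. Integration over the adelic quotient kills this character unless $S=0$, collapsing the sum over $\gamma\in F^{2m+2}$ to the subset $\{S=0\}$; meanwhile, the Heisenberg-type generators integrated over $F^{m'}\backslash\aaf^{m'}$ combine with the sum $\sum_{R\in F^{m'}}$ to produce the desired $\int_{\aaf^{m'}}dR$. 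This gives the first formula.

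For the second formula I would apply the first with $h'=ul$ (the constant term is right $N_Q$-invariant, so $u$ drops out) and expand the extended action on the slice $\{S=0\}$. The factor $l_1$ sends $R\mapsto l_1^{-1}R$, and the substitution $R\mapsto l_1 R$ in $\int_{\aaf^{m'}}dR$ produces the Jacobian $|\det l_1|$; comparing the $L$-normalization $|\nu(l)|^{-(m+1)/2}$ at degree $m$ with the normalization $|\nu(l_0)|^{-(m-m'+1)/2}$ at degree $m-m'$ (using $\nu(l)=\nu(l_0)$, since the $\gll_{m'}$-factor is embedded with similitude $1$) yields exactly the factor $|\nu(l_0)|^{-m'/2}$. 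The residual expression in $\gamma$ is then visibly $\bigl(\omega^{m-m'}(g_0 g',l_0)\varphi_R\bigr)(\gamma)$, and reassembling the $\sll_2$-integral gives $\int_{\aaf^{m'}}\theta^{m-m'}_{\varphi_R}(f)(l_0)\,dR$.

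The main technical obstacle is the bookkeeping of normalizations: the Weil constants $\gamma_{E/F}$, the character $\epsilon_{E/F}$ attached to torus elements of $\sll_2$, the modulus character $\delta_Q$, and the similitude twist $L$ must all be tracked simultaneously, and they must conspire to produce exactly the factor $|\det l_1|\,|\nu(l_0)|^{-m'/2}$ with no extraneous characters surviving. In the isometry case this is carried out in~\cite{rallis1984howe}; the similitude version follows by substituting the extended Weil representation above and repeating the calculation essentially verbatim.
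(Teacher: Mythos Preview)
Your overall plan---unfold the constant term, interchange the $N_Q$- and $\sll_2$-integrals, and analyze the action of $N_Q$ on the theta kernel---is exactly Rallis's strategy, which is all the paper invokes. But the mechanism you describe in the middle step is wrong, and this is where the real content of the lemma lies.

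In the Schr\"odinger model used here the orthogonal group acts purely geometrically: $\omega^m(1,h)\varphi(x)=\varphi(h^{-1}x)$. No additive character $\psi_F$ appears in the action of $N_Q$; that happens only on the symplectic side. In particular there are no elements of $N_Q$ that ``fix $(\gamma,S)$ and translate $R$'' by a fixed amount (the center of $N_Q$ shifts $R$ by $BS$, which depends on $S$), and the $\mathrm{Hom}(W^\perp/W,W)$ piece shifts both the $R$- and $\gamma$-coordinates rather than multiplying by a phase. So the orthogonality-of-characters argument you sketch does not apply, and you have given no reason for the $S\neq 0$ terms to vanish.

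What actually kills the unwanted orbits is the cuspidality of $f$, which you never invoke. After decomposing $V(F)$ into $N_Q(F)$-orbits one finds three types. For $S_0=0$, $\gamma_0\neq 0$ the orbit is $\{(R,\gamma_0,0):R\in F^{m'}\}$, and unfolding over $N_Q(F)\backslash N_Q(\aaf)$ genuinely converts $\sum_{R_0\in F^{m'}}$ into $\int_{\aaf^{m'}}dR$---this is the main term. For $S_0=0$, $\gamma_0=0$ the vector is isotropic, so the upper unipotent of $\sll_2$ acts trivially on the corresponding contribution and the $\sll_2$-integral factors through the constant term of $f$, which vanishes; the same observation shows that the $\gamma=0$ term of the \emph{desired} expression also vanishes, so the two agree. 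The $S_0\neq 0$ orbits require a further unfolding after which, again, cuspidality of $f$ annihilates them. Your treatment of the second formula (the Levi normalization and the similitude bookkeeping) is fine; it is the first formula whose proof needs to be redone along these lines.
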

\paragraph{Unitary group case.}
Firstly let $F$ be a local field and $\rho$ a character of $E\baz$ which satisfies that $\rho|_{F\baz}=\epsilon_{E/F}$.
We define the Weil representation $\omega^{(k,\rho)}=\omega^{(k,\rho)}_{\psi_F}$ of $\uu\times\mathrm {U}(2k+1)$ on $\mathcal S(E^{2k+2})$ by

\begin{align*}
\omega^{(k,\rho)}(1, h)\varphi(x) &=\varphi(h^{-1}x), && h\in \mathrm {U}(2k+1),\\
\omega^{(k,\rho)}\left(
\left(
\begin{smallmatrix}
 a&0 \\
 0&\ (a^{-1})^c 
\end{smallmatrix}
\right)
, 1\right)\varphi(x)&= \rho(a)|a|_{E}^{(2k+1)/2}\varphi(a^cx), && a\in E\baz ,\\
\omega^{(k,\rho)}\left(
\left(
\begin{smallmatrix}
 1&b \\
 0&1 
\end{smallmatrix}
\right)
, 1\right)\varphi(x)&= \varphi(x)\psi_{E}(\tfrac{1}{2}b{}^t\!x^cJ_{2k+1}x), && b\in F,\\
\omega^{(k,\rho)}\left(
\left(
\begin{smallmatrix}
 0&-1 \\
 1&0 
\end{smallmatrix}
\right)
, 1\right)\varphi(x)
&= \gamma_{E/F}^{-1}\int_{E^{2k+1}}\varphi(-y)\psi_{E}({}^t\! x^cJ_{2k+1}y)dy
\end{align*}
for $\varphi\in \mathcal S(E^{2k+2})$ and $x\in E^{2k+2}$.
Here, $dy$ is the selfdual Haar measure and $\gamma_{E/F}$ is the Weil constant which is equal to the one in unitary group case.
We extend $\omega^{(k,\rho)}$ to $R'_k$ and define $\Omega^{(k,\rho)}=\Omega^{(k,\rho)}_{\psi_F}$ similarly to symplectic-orthogonal group case.
Then, for an irreducible admissible representation $\sigma'$ of $\mathrm {U}(2k+1)$ (resp. $\mathrm {GU}(2k+1)$), the maximal $\sigma'$-isotypic quotient of $\omega^{(k,\rho)}$ (resp. $\Omega^{(k,\rho)}$) has the form
$$\Theta^\rho(\sigma')\boxtimes\sigma'$$
where $\Theta^\rho(\sigma')$ is some smooth representation of $\uu$ (resp. $\guu^+$).
In this case, Howe duality is also holds.
We note that the central character of $\Theta^\rho(\sigma')$ is $\chi_{\sigma}^*\rho$ for an irreducible representation $\sigma'$ of $\mathrm {GU}(2k+1)$, where $\chi_{\sigma'}^*=(\chi_{\sigma'}^c)^{-1}$ is the conjugate dual of the central character $\chi_{\sigma'}$ of $\sigma'$.
Moreover, $\Theta^\rho(\sigma')$ satisfies that
$$\Theta^\rho(\sigma')\simeq\chi_{\sigma}^*\rho\boxtimes\Theta^\rho(\sigma'_0),$$
where $\sigma'_0:=\sigma'|_{\mathrm {U}(2k+1)}$ (it is irreducible) and we regard the representation $\chi_{\sigma}^*\rho\boxtimes\Theta^\rho(\sigma'_0)$ of $E\baz\times\mathrm {U}(2k+1)$ as a representation of $\mathrm {GU}(2k+1)$
through the isomorphism 
 \begin{align*}
 E\baz\times\mathrm {U}(2k+1)/\{z^{-1},z1_{2k+1} \ |\ z\in\bu(1)\}&\bij\mathrm{GU}(2k+1),\\
 (\alpha,g)&\mapsto \alpha g.
 \end{align*}

Secondly let $F$ be a number field and $\rho=\otimes_v\rho_v$ an automorphic character of $\aae\baz$ which satisfies that $\rho|_{\aaf\baz}=\ep$ and $\omega^{(k,\rho)}=\otimes_v\omega^{(k,\rho_v)}_{F_v}$.
For $\varphi\in\mathcal S(\aae^{2k+1})$ and a cusp form $f$ of $\mathrm {GU}(2k+1)(\aaf)$, we put

\begin{align*}\theta^\rho_\varphi(g,h)=\sum_{\gamma\in E^{2k+1}}\omega^{(k,\rho)}(g,h)\varphi(\gamma)&&(g,h)\in R_k(\aaf)
\end{align*}
and
\begin{align*}\theta^\rho_\varphi(f)(g')=\int_{\mathrm {U}(2k+1)(F)\backslash\mathrm {U}(2k+1)(\aaf)}\theta^\rho_\varphi(g', h_0h')\overline {f(h_0h')}dh_0&&\mbox{resp.}\ g'\in\guu_{\aaf}^+
\end{align*}
where $h'\in \mathrm {GU}(2k+1)(\aaf)$ satisfies $\nu(h')=\nu(g')$ (we note that the integral does not depends on the choice of $h'$).
For an irreducible cuspidal automorphic representation $\sigma'$ of $\mathrm {GU}(2k+1)(\aaf)$, let $\theta^\rho(\sigma')$ be the representation of $\guu_{\aaf}^+$ generated by $\{\theta^\rho_\varphi(f)\ | \ \varphi\in \mathcal S(\aae^{2k+1}),f\in\sigma'\}$.
We note that $\theta^\rho(\sigma')\simeq \otimes_v\theta^{\rho_v}(\sigma'_v)$ if $\theta^\rho(\sigma')$ is square integrable modulo center.

If we regard ${\rm GU}(2k+1)$ as a subgroup of $\mathrm {GO}(2k+2, 2k)$ by the identification $F^{4k+2}=E^{2k+1}$, then
$$\omega^{(k,\rho)}|_{R_{2k}\cap R'_{k}(F)}=\omega^{2k}|_{R_{2k}\cap R'_{k}(F)}$$
 for any local field $F$.
 Moreover, for irreducible admissible representations $\tau'$ and $\sigma'$ of $\gll_2^+$ and ${\rm GU}(2k+1)$ the following identity  (local see-saw identity)
 $${\rm Hom}_{\gll_2^+}(\Theta^\rho(\sigma'), \tau')\simeq{\rm Hom}_{{\rm GU}(k+1,k)}(\Theta^{2k}(\tau'), \sigma')$$
holds. 
If $F$ is global, there is a similar equation (global see-saw identity), namely
$$\int_{\mathrm {GU}(2k+1)(F)\aaf\baz\backslash\mathrm {GU}(2k+1)(\aaf)}\theta^\rho(f_1)(h)\overline{f_2(h)}dh=\int_{\gll^+_{2,F}\aaf\baz\backslash \gll^+_{2,\aaf}}\theta^{2k}(f_2)(g)\overline{f_1(g)}dg,$$
where $f_1$ is a cusp form of $ \gll^+_{2,\aaf}$ with central character $\chi_1$ and $f_2$ is a cusp form of $\mathrm {GU}(2k+1)(\aaf)$ with with central character $\chi_2$ such that $\chi_2|_{\aaf\baz}=\chi_1^{-1}\ep$.

\subsection{Determination of near equivalence class}
We keep the notations of \S\ref{her-ov}.
For each finite place $v$ of $F$, let $\varpi_v$ be a prime element of $F_v$ and $q_v$ be the cardinality of residue field of $F_v$.
In this subsection, we show the following proposition.
\begin{prop} \label{P:near}
$\mathcal H(\tau)\subset L^2_{\psi}$.
\end{prop}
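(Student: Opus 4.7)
The plan is to prove near equivalence of $\mathcal H(\tau)$ to any constituent of $L^2_\psi$ at almost all places, and then invoke the injection at the end of \S\ref{mf} from near equivalence classes of discrete automorphic representations to $\Psi_2(\bu(4))$.

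First I would verify that $\mathcal H(\tau)$ is contained in the discrete spectrum. By the results of \cite{yamana2014functions} already cited in \S\ref{her-ov}, $\theta^2(\tilde\tau)$ is square integrable modulo center, and its central character is trivial (since the central character of $\tilde\tau$ is $\chi\chi^c$, so the central character of $\theta^2(\tilde\tau)$ equals $(\chi\chi^c)^{-1}\epsilon_{E/F}^{?}$ which becomes trivial after passing to ${\rm PGO}$); consequently $\mathcal H(\tau)$, being its pullback along $j$ restricted to $\uuuu(\aaf)$, is square integrable, hence decomposes discretely into irreducibles of $\uuuu(\aaf)$. It now suffices to show that every irreducible constituent of $\mathcal H(\tau)$ has A-parameter $\psi = \phi[2]$.

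For this, I would fix a nonarchimedean place $v$ at which $\pi_v$, $\chi_v$ and the Whittaker datum are unramified. Then $\tilde\tau_v$ is the unique unramified constituent of the principal series of $\gll_2^+(F_v)$ attached to $\phi_v = \phi_{\pi,v} \otimes (\chi_v \circ \det)$. By the unramified theta correspondence between $\gll_2$ and $\mathrm{GO}(4,2)$ (combined with the tower property of Lemma \ref{l:tow}, which forces the first occurrence to lie on the first step, so that the lift does not vanish), $\theta^2(\tilde\tau_v)|_{\gso(4,2)(F_v)}$ is the unique unramified irreducible constituent of a parabolically induced representation whose Satake parameter can be read off from that of $\tilde\tau_v$: up to the twists encoded in the Weil representation, its associated L-parameter on $\gso(4,2)(F_v)$ equals $\phi_v |\cdot|_{E_v}^{1/2} \oplus \phi_v |\cdot|_{E_v}^{-1/2}$. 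Transporting this through $j$ (which induces an isomorphism of dual groups ${\rm PGSO}(4,2)^\vee \simeq {\rm PGU}(2,2)^\vee$) yields an L-parameter of $\uuuu(F_v)$ equal to the associated L-parameter $\phi_{\psi_v}$ of $\psi_v = \phi_v[2]$ (by the definition of $\phi_{\psi_v}$ recalled in \S\ref{S:ec}). Hence by property (f) of \S\ref{endo-prop} the local component of $\mathcal H(\tau)$ at $v$ lies in $\Pi_{\psi_v}$.

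Since this holds for almost all $v$, every irreducible constituent of $\mathcal H(\tau)$ is nearly equivalent to the unramified representation $\pi(\psi,1)$ of $L^2_\psi$ (which is nonzero by (d) in \S\ref{endo-prop} together with $\epsilon_\psi = 1$). By the injectivity of the map from near equivalence classes of irreducible discrete representations to $\Psi_2(\bu(n))$, this forces $\mathcal H(\tau) \subset L^2_\psi$. The main obstacle I foresee is the precise bookkeeping at Step 2: the accidental isomorphism $j$ identifies the dual groups only up to a nontrivial twist, and the passage from $\uu \times \aae^\times$ to $\gll_2^+$ (which is hidden in the definition of $\tilde\tau$) introduces a $\chi$-twist that must be tracked through the Weil representation normalization chosen in \S\ref{SS: tl}; making these twists cancel out exactly so that the Satake parameter of $\theta^2(\tilde\tau_v)$ really is $\phi_{\psi_v}$ on the $\bu(2,2)$-side is the computationally delicate step.
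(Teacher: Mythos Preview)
Your proposal is correct and follows essentially the same route as the paper: establish discreteness (already done in \S\ref{her-ov} via \cite{yamana2014functions}), compute the unramified local components of $\mathcal H(\tau)$ and match their Satake parameters to $\phi_{\psi_v}$, then invoke the injection from near equivalence classes to $\Psi_2(\bu(4))$ at the end of \S\ref{mf}.

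The only substantive difference is in how the ``delicate step'' you flag is handled. Rather than citing an abstract unramified theta correspondence plus the isomorphism of dual groups, the paper (Lemma \ref{L:unram}) carries out the computation by hand: it builds an explicit zeta integral $Z(s,\varphi)$ on $\mathcal S(F_v^6)$, checks its equivariance under $(B\times B_2^{GO})\cap R_2$, and thereby produces a nonzero $R_2$-map from $\omega^2$ to an explicit principal series; Frobenius reciprocity then pins down $\theta^2(\tilde\tau_v)$ as a subquotient of a concrete induced representation of $\guuuu(F_v)$, whose unramified constituent visibly has L-parameter $\phi_{\psi_v}$. This bypasses any need to track twists through the dual-group isomorphism induced by $j$, since the formulae in \S\ref{her-isom} are applied directly on the group side. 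Also, your appeal to Lemma \ref{l:tow} for nonvanishing is misplaced---that lemma computes constant terms; the nonvanishing and irreducibility of $\theta^2(\tilde\tau_v)|_{{\rm GSO}(4,2)}$ come from Lemma \ref{l:coms} (conservation relation).
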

Since A-parameters are determined by their near equivalence classes, what we need to do is to determine (almost all) unramified components of $\mathcal H(\tau)$ explicitly so we show the following lemma.
\begin{lemm}\label{L:unram}
We assume that $v$ splits or is unramified on $E$ and that $\tau_v$ and $\chi_v$ and $\theta(\tilde \tau_v)$ are unramified.
Let $\chi_v\chi'_1\rtimes 1_{\bu(0)(F_v)}$ be a induced representation which $\tau_v$ is an irreducible subrepresentation of, where $\chi_1'$ is a some unramified character of $E_v\baz$.
Then, $\theta(\tilde \tau_v)|_{{\rm GSO}(4,2)(F_v)} \circ j$ is an irreducible subquotient of 
$$\chi_1'^*|\cdot|_{E_v}^{-\frac{1}{2}}\times\chi_1'|\cdot|_{E_v}^{-\frac{1}{2}}\rtimes |\cdot|_{F_v}$$
as a representation of $\guuuu(F_v)$ with the trivial central character.
\end{lemm}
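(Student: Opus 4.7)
My plan is to identify the Satake parameter of $\theta^2(\tilde\tau_v)$ via the tower property (Lemma~\ref{l:tow}) and then to transfer it across the exceptional isomorphism $j$ of \S\ref{her-isom}. Since $v$ is split or inert and unramified and all data are unramified, the theta lift is itself unramified (or zero) and is determined by its Satake parameter, so the task reduces to a torus-character calculation.

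First I would compute the Jacquet module of $\theta^2(\tilde\tau_v)$ along the parabolic $Q \subset \go$ with Levi $M_Q \cong \gll_2 \times \mathrm{GO}(2,0)$, applying Lemma~\ref{l:tow} in the case $m=2$, $m'=2$. The resulting identity
\[
\theta^2_\varphi(f)_{N_Q}(ul)=|\det l_1|\,|\nu(l_0)|^{-1}\int_{\aaf^2}\theta^0_{\varphi_R}(f)(l_0)\,dR
\]
reduces the problem to identifying $\theta^0(\tilde\tau_v)$ as a character of $\mathrm{GO}(2,0)(F_v)$. Because the anisotropic binary form $S_0$ has discriminant generating $E/F$, one has $\mathrm{GSO}(2,0) \cong \res\gll_1$, and the dual pair $(\gll_2^+, \mathrm{GO}(2,0))$ realizes the local Shimizu / automorphic-induction correspondence; under this correspondence an unramified principal series of $\gll_2^+$ coming from $\chi_v\chi'_1$ corresponds to an unramified character of $E_v^\times$ which, after cancelling the $\chi_v$-twist inherent in the identification $\gll_2^+ \subset \guu^+$, is a twist of $\chi'_1$.

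Feeding this back into the tower formula and absorbing the $|\det l_1|\cdot|\nu(l_0)|^{-1}$ twist together with the Weil-representation normalizations $|\cdot|^{(m+1)/2}$ pins the Satake data of $\theta^2(\tilde\tau_v)$ down to the two characters $\chi'_1|\cdot|_{E_v}^{-1/2}$ and $(\chi'_1)^*|\cdot|_{E_v}^{-1/2}$ of the two $\gll_1$-slots together with the similitude character $|\cdot|_{F_v}$. By Lemma~\ref{l:coms}, $\theta^2(\tilde\tau_v)|_{\gso(F_v)}$ is irreducible, so it is the unique irreducible subquotient of the corresponding unramified principal series on $\gso(F_v)$. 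I would then use the explicit formulas in \S\ref{her-isom} to transfer these Satake data to $\guuuu(F_v)$: the diagonal torus $\res\gll_1^2\times\gll_1 \subset \guuuu$ is sent by $j$ into the torus of $\gso$ with the two $\res\gll_1$-slots matching the $\chi'_1$- and $(\chi'_1)^*$-slots in opposite order (this swap accounts for the asymmetric placement $\chi_1'^*, \chi_1'$ in the statement), while the $\gll_1$-slot $\lambda\mapsto j(\mathrm{diag}(1,1,\lambda,\lambda))$ carries the $|\cdot|_{F_v}$-twist. This exhibits $\theta^2(\tilde\tau_v)|_{\gso(F_v)}\circ j$ as an irreducible subquotient of $\chi_1'^*|\cdot|_{E_v}^{-1/2}\times\chi_1'|\cdot|_{E_v}^{-1/2}\rtimes|\cdot|_{F_v}$, as claimed.

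The main obstacle is the delicate bookkeeping in this last step: the $|\cdot|^{(m+1)/2}$-normalizations of the Weil representation, the $\epsilon_{E/F}$-twist built into the $\gll_2^+$ formalism, and the interchange of the two $\res\gll_1$-slots forced by the explicit shape of $j$ must all conspire to yield exactly the stated principal series. By contrast the identification of $\theta^0$ invokes only the classical Shimizu correspondence for unramified data, which is routine, and Lemma~\ref{l:coms} removes any ambiguity among subquotients on the $\gso$-side once the Satake data are known.
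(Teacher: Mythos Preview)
Your approach is different from the paper's and, as written, has a real gap. The paper does not touch the tower property here. Instead it works entirely locally: it writes down a zeta integral
\[
Z(s,\varphi)=\int_{F_v^\times}\varphi\bigl({}^t(b,0,\dots,0)\bigr)\,|x|^s\,dx
\]
on $\mathcal S(F_v^6)$, checks its equivariance under the Borel $B_R=(B\times B_2^{GO})\cap R_2$, and for the value of $s$ matching the Satake data of $\tilde\tau_v$ obtains a nonzero $R_2(F_v)$-map from $\omega^2$ into an explicit principal series. Frobenius reciprocity then gives a nonzero $\gll_2^+\times\go$-map out of $\Omega^2$, so $\theta^2(\tilde\tau_v)$ is a subquotient of $\chi'^{-2}\times|\cdot|_{F_v}^{-1}\rtimes(\chi'|\cdot|_{F_v}^{1/2}\circ\nu)$ on $\go(F_v)$, and the identity
\[
(\chi_1\times\chi_2\rtimes\chi_0)\circ j=(\chi_1\chi_2\circ N\cdot\chi_0^c)\times(\chi_1\circ N\cdot\chi_0)\rtimes\chi_1^{-1}
\]
from \S\ref{her-isom} converts this into the stated induced representation of $\guuuu(F_v)$.

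The gap in your proposal is that Lemma~\ref{l:tow} is a \emph{global} statement: it computes constant terms of adelic theta functions along unipotent radicals and says nothing about Jacquet modules of local $\Theta$-lifts. What your strategy would actually require is Kudla's local filtration of the Jacquet functor applied to the Weil representation, and that filtration has several graded pieces, not just the one carrying $\theta^0(\tilde\tau_v)$; isolating which piece survives in the irreducible quotient $\theta^2(\tilde\tau_v)$ and reading off the full Borel Satake data from it is a genuine argument that you have not supplied. Your outline can probably be pushed through along those lines, but the paper's zeta-integral construction bypasses all of this by exhibiting the required intertwining map directly.
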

\begin{proof}
First of all, $\theta(\tilde \tau_v)|_{{\rm GSO}(4,2)(F_v)} \circ j$ is irreducible by Lemma \ref{l:coms}.
Furthermore, by the formulae in \S\ref{her-isom}, the equation
$$\chi_1\times \chi_2 \rtimes \chi_0\circ j=(\chi_1\chi_2\circ N\cdot\chi_0^c)\times(\chi_1\circ N\cdot\chi_0)\rtimes\chi_1^{-1}$$
holds as a representation of $\guuuu(F_v)$ for any characters $\chi_1,\chi_2$ of $F_v\baz$ and $\chi_0$ of $E_v\baz\simeq {\rm GSO}(2,0)(F_v)$.
%

We only think about the case that $v$ splits on $E$
(if $v$ is unramified on $E$, the proof is similar). 
Then $\chi_v\chi'_1\rtimes 1_{\bu(0)(F_v)}$ is irreducible because of Ramanujan bound, so $\tau_v=\chi_v\chi'_1\rtimes 1_{\bu(0)(F_v)}$.
Since the central character $\chi_v\chi'_1|_{\bu(1)(F_v)}$ of $\tau_v$ is equal to $\chi_v|_{\bu(1)(F_v)}$, we can write $\chi'_1=(\chi',\chi')$ for some (unramified) character $\chi'$ of $F_v\baz$.
Then, we have
$$\tilde\tau_v=\ind^{\gll_{2}(F_v)}_{B(F_v)}\chi'^2\boxtimes\chi'^{-1},$$
where $B$ is the subgroup of upper triangular matrices of $\gll_2$ and $\chi'^2\boxtimes\chi'^{-1}$ is regarded a character of $B_v$ by
$$\chi'^2\boxtimes\chi'^{-1}((\begin{smallmatrix}
a&*\\
& a^{-1}b
\end{smallmatrix}))=\chi'^2(a)\chi'^{-1}(b).$$

For $s\in\cc$ and $\varphi\in\mathcal S(F^6_v)$, we define $Z(s,\varphi)$ by the following integral
$$Z(s,\varphi)=\int_{F_v\baz}\varphi(^t(b,0,\dots,0))|x|^s dx.$$
The above integral is holomorphic if $\varphi(0)=0$.
Further, for $\varphi_0\in\mathcal S(F^6_v)$ such that
$$\varphi(^t(b_1,\dots,b_6))=\begin{cases}
1 \ &\mbox{if $|b_i|\leq1$ for all $i$};\\
0 \ &\mbox{otherwise},
\end{cases}$$
we have
$$Z(s,\varphi_0)=C\frac{1}{1-q_v^{s}}$$
for $s\in\cc$ such that $\mathrm{Re}s>0$, where $C\neq 0$ is a some constant.
Thus $Z(s,\varphi)$ have always has the meromorphic continuation to all $\cc$ which is holomorphic at any $s\neq0$.
$Z(s,\varphi)$ satisfies the following equation
$$Z(s,\omega^2(g',h')\varphi)=|a|^{-s+3}_{F_v}|a'|_{F_v}^{s} |\nu(\alpha)|^{-\frac{3}{2}}_{F_v}Z(s,\varphi)=\delta_{B}(g')^{\frac{1}{2}}\delta_{B_2^{GO}}(h')^{\frac{1}{2}}|a|^{-s+2}_{F_v}|a'|_{F_v}^{s-2}|b'|_{F_v}^{-1}|\nu(\alpha)|^{\frac{1}{2}}_{F_v}Z(s,\varphi)$$
for all $$(g',h')=\left(
\begin{pmatrix}
a&*\\
&N(\alpha) \cdot a^{-1}
\end{pmatrix},
\begin{pmatrix}
a'&*&*&*&*\\
&b'&*&*&*\\
&&\alpha&*&*\\
&&&*&*\\
&&&&*
\end{pmatrix}
\right)\in (B\times B^{GO}_2)\cap R_2'(F_v)$$
and $\varphi\in\mathcal S(F_v^6)$, where $\delta_{B}$ and $\delta_{B_2^{GO}}$ are the modulus characters of ${B}(F_v)$ and ${B_2^{GO}}(F_v)$ respectively.
Taking $s\neq0$ such that $\chi'^{2}=|\cdot|^{-s+2}_{F_v}$ (recall Ramanujan bound), we have a nonzero $R_{2}(F_v)'$-map
\begin{align*}\omega^2&\map \ind^{R_{2}(F_v)}_{B_{R}(F_v)}((\chi'^2\boxtimes\chi'^{-1})\boxtimes(\chi'^{-2}\boxtimes|\cdot|_{F_v}^{-1}\boxtimes\chi'|\cdot|_{F_v}^{\frac{1}{2}}\circ \nu))|_{M_R(F_v)},\\
\varphi &\mapsto [(g,h)\mapsto Z(s,\omega^2(g,h)\varphi)],
\end{align*}
where $B_R=(B\times B^{GO}_2)\cap R_2$ and $M_R$ is its the Levi subgroup.
We can see that the map
\begin{align*}\ind^{\gll_{2}(F_v)}_{B(F_v)}(\chi'^2\boxtimes\chi'^{-1})\boxtimes(\chi'^{-2}\times|\cdot|_{F_v}^{-1}\rtimes\chi'|\cdot|_{F_v}^{\frac{1}{2}}\circ \nu)
&\map
\ind^{R_{2}(F_v)}_{B_{R}(F_v)}(\chi'^2\boxtimes\chi'^{-1})\boxtimes(\chi'^{-2}\boxtimes|\cdot|_{F_v}^{-1}\boxtimes\chi'|\cdot|_{F_v}^{\frac{1}{2}}\circ \nu)|_{M_R(F_v)}\\
f&\mapsto f|_{R_2'(F_v)}
\end{align*}
is bijective $R_2(F_v)$-map.
Thus we have a nonzero $\gll_{2}(F_v)\times \mathrm{GO}(4,2)(F_v)$-map
$$\Omega^2\map\ind^{\gll_{2}(F_v)}_{B(F_v)}(\chi'^2\boxtimes\chi'^{-1})\boxtimes(\chi'^{-2}\times|\cdot|_{F_v}^{-1}\rtimes\chi'|\cdot|_{F_v}^{\frac{1}{2}}\circ \nu)$$
by Frobenius reciprocity, so $\theta(\tilde \tau_v)|_{{\rm GSO}(4,2)(F_v)} \circ j$ is a subquotient of
$$\chi'^{-2}\times|\cdot|_{F_v}^{-1}\rtimes(\chi'|\cdot|_{F_v}^{\frac{1}{2}}\circ N) \circ j=\chi_1'^*|\cdot|_{E_v}^{-\frac{1}{2}}\times\chi_1'|\cdot|_{E_v}^{-\frac{1}{2}}\rtimes |\cdot|_{F_v}.$$
\end{proof}
There is a natural nonzero $\uuuu(F_v)$-map 
$$\chi_1'^*|\cdot|_{E_v}^{-\frac{1}{2}}\times\chi_1'|\cdot|_{E_v}^{-\frac{1}{2}}\rtimes |\cdot|_{F_v}\map\chi_1'^*|\cdot|_{E_v}^{-\frac{1}{2}}\times\chi_1'|\cdot|_{E_v}^{-\frac{1}{2}}\rtimes 1_{\bu(0)}$$
and the irreducible unramified subquotient of $\chi_1'^*|\cdot|_{E_v}^{-\frac{1}{2}}\times\chi_1'|\cdot|_{E_v}^{-\frac{1}{2}}\rtimes 1_{\bu(0)}$ is equal to the unramified representation which corresponds to $\phi_{\psi_v}$, so this lemma leads Proposition \ref{P:near} immediately.
\begin{cor}\label{C:iso}
$$\mathcal H(\tau)\simeq \otimes_v(\theta^2(\tilde \tau_v)|_{\mathrm{GSO}(4,2)(F_v)}\circ j)|_{\uuuu(F_v)}.$$
\end{cor}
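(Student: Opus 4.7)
The plan is to combine the restricted tensor product structure of the theta lift $\theta^2(\tilde\tau)$ with Proposition \ref{P:near}. I would proceed in three steps.

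First, I would identify the abstract $\uuuua$-representation appearing on the right-hand side. Since $\theta^2(\tilde\tau)$ is nonzero and square-integrable modulo center, it admits a restricted tensor product decomposition $\bigotimes_v' \theta^2(\tilde\tau_v)$. By Lemma \ref{l:coms}, each local factor $\theta^2(\tilde\tau_v)|_{\mathrm{GSO}(4,2)(F_v)}$ is nonzero and irreducible, so the restriction of $\theta^2(\tilde\tau)$ to $\gsoa$ is itself abstractly irreducible. Because the central character is trivial, the isomorphism $j$ then transports this to an irreducible $\guuuua$-representation with trivial central character, whose restriction to $\uuuua$ is exactly the abstract tensor product $\bigotimes_v' \sigma_v$ appearing in the corollary, with $\sigma_v := (\theta^2(\tilde\tau_v)|_{\mathrm{GSO}(4,2)(F_v)} \circ j)|_{\uuuu(F_v)}$.

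Second, the map $f \mapsto (f\circ j)|_{\uuuua}$ is $\uuuua$-equivariant and surjects by definition onto $\mathcal H(\tau)$, exhibiting $\mathcal H(\tau)$ as a $\uuuua$-quotient of $\bigotimes_v' \sigma_v$. To upgrade this surjection to the desired isomorphism, I would argue that its kernel is trivial. By Proposition \ref{P:near} and Corollary \ref{c:mf}, $\mathcal H(\tau)$ sits inside the multiplicity free space $L^2_\psi$. On the other hand, any irreducible constituent of $\bigotimes_v' \sigma_v$ has local components lying in the A-packets $\Pi_{\psi_v}$ (by the same unramified computation underlying Lemma \ref{L:unram}), so it carries A-parameter $\psi$ globally. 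Any nonzero $\uuuua$-subrepresentation of $\bigotimes_v' \sigma_v$ killed by the surjection would correspond to automorphic constituents in $L^2_\psi$ missing from $\mathcal H(\tau)$, contradicting the multiplicity free decomposition of $L^2_\psi$ together with the fact that $\mathcal H(\tau)$ already accounts for the abstract class of $\theta^2(\tilde\tau)|_{\uuuua}$.

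The main obstacle is precisely this injectivity step. Restriction of automorphic forms from $\guuuua$ to the subgroup $\uuuua$ may a priori have nontrivial kernel, because $\guuuua$ is not generated by $\uuuua$ together with its center: the similitude $\nu$ on the center has image only $N_{E/F}(\aae^\times)$, which is a proper subgroup of $\aaf^\times$. The resolution is the combination of Proposition \ref{P:near} with the multiplicity free property of $L^2_\psi$ outlined above, which rules out any killed irreducible piece and therefore forces the abstract identification $\mathcal H(\tau)\simeq \bigotimes_v' \sigma_v$.
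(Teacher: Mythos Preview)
Your identification of the abstract source representation and the surjection onto $\mathcal H(\tau)$ is fine; the gap is exactly where you flag it, in the injectivity step, and your proposed resolution does not close it. You claim that a killed irreducible constituent ``would correspond to automorphic constituents in $L^2_\psi$ missing from $\mathcal H(\tau)$,'' but a constituent in the kernel is, a priori, only an abstract representation: nothing you have said forces it to occur in $L^2_\psi$, so there is nothing for the multiplicity-freeness of $L^2_\psi$ to contradict. The phrase ``$\mathcal H(\tau)$ already accounts for the abstract class of $\theta^2(\tilde\tau)|_{\uuuua}$'' is precisely the statement you are trying to prove. Relatedly, your assertion that every irreducible constituent of $\bigotimes_v'\sigma_v$ has local components in $\Pi_{\psi_v}$ is only justified at unramified places by Lemma~\ref{L:unram}; at the remaining places you have not supplied an argument.

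The paper's proof fills this gap with two ingredients you do not use. First, Lemma~\ref{L:fix} (stability of elements of $\Pi_{\psi_v}$ under $\guuuu(F_v)$-conjugation for $n$ even) forces $\sigma_v$ to be \emph{irreducible} at every nonarchimedean $v$: one knows via Proposition~\ref{P:near} that some irreducible summand of $\sigma_v$ lies in $\Pi_{\psi_v}$, and since the at-most-two summands are $\guuuu(F_v)$-conjugate, Lemma~\ref{L:fix} says there is only one. Hence the ambiguity in $\bigotimes_v'\sigma_v$ comes solely from archimedean places, and the finitely many irreducible constituents are pairwise $\guuuu(F_v)$-conjugate at each such place. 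Second, weak approximation for $\guuuu$ produces a single rational element $g\in\guuuu(F)$ realizing any such archimedean conjugation simultaneously; since the automorphic space $\mathcal H(\tau)$ is visibly $\guuuu(F)$-stable and one constituent maps nontrivially, every constituent maps nontrivially. Injectivity then follows because the source is multiplicity free with pairwise non-isomorphic irreducible summands.
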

\begin{proof}
$\theta^2(\tilde \tau)$ is square integrable modulo center as already mentioned, so $\theta^2(\tilde \tau)\simeq\otimes_v\theta^2(\tilde \tau_v)$.
Therefore we have to show the surjective map
$$\theta^2(\tilde \tau)\surj \mathcal H(\tau)$$
is injective.
We fix an irreducible subrepresentation $\pi$ of $\theta^2(\tilde \tau)|_{{\rm GSO}(4,2)(\aaf)}\circ j|_{\uuuu(\aaf)}$ (as abstract representation) whose image in $\mathcal H(\tau)$ is nonzero.
For a place $v$ of $F$, the set $S$ of irreducible subrepresentation of $\theta^2(\tilde \tau_v)|_{\mathrm{GSO}(4,2)(F_v)}\circ j|_{\uuuu(F_v)}$ contain at most two distinct elements and the action of $\guuuu(F_v)$ on $S$ is transitive since $\theta^2(\tilde \tau_v)|_{\mathrm{GSO}(4,2)(F_v)}\circ j$ is irreducible and
$${\rm GU}(n,n)(F_v)/E_v^\times{\rm U}(n,n)(F_v)\simeq\{\pm 1\} \mbox{ or } \{1\}.$$
Then, $\pi_v\in S$ obviously.
When $v$ is nonarchimedean, we have $\pi_v\in\Pi_{\psi_v}$ by Proposition \ref{P:near}.
Moreover, $\pi_v$ is fixed by the action of $\guuuu(F_v)$ by lemma \ref{L:fix}, so $\#S=1$, namely $\theta^2(\tilde \tau_v)|_{\mathrm{GSO}(4,2)(F_v)}\circ j|_{\uuuu(F_v)}$ is irreducible if $v$ is nonarchimedean.
Thus $\theta^2(\tilde \tau)|_{{\rm GSO}(4,2)(\aaf)}\circ j|_{\uuuu(\aaf)}$ is multiplicity free and any irreducible subrepresentation $\pi'$ of it satisfies $\pi_v\simeq\pi'_v$ for each nonarchimedean place $v$ and $\pi_v^{g_v}\simeq\pi'_v$ for each archimedean place $v$, where $g_v\in \guuuu(F_v)\ ({\rm mod}\ E_v\baz\uuuu(F_v))$.
Then, by weak approximation theory, $\pi^g=\pi'$ for some $g\in\guuuu(F)$.
Since $\mathcal H(\tau)$ is stable (as an automorphic representation) by the action of $\guuuu(F)$, the image of $\pi'$ in $\mathcal H(\tau)$ is nonzero.
Consequently, the map $\theta^2(\tilde \tau)$ to $\mathcal H(\tau)$ is injective.
\end{proof}
\subsection{The proof of Theorem \ref{T: her}}\label{det}
In this subsection, we prove Theorem \ref{T: her}.
To prove it, the following proposition plays an essential role.
\begin{prop}\label{T:locikeda}
Let $v$ be a place of $F$ and $\tau'=\pi((\phi_\chi)_v,\eta')\in\Pi_{(\phi_\chi)_v}$ and we put $\tilde\tau'=(\tau'\boxtimes\chi_v)|_{\gll^+_{2,v}}$.
Then, we have
$$(\theta^2(\widetilde\tau')|_{\mathrm{GSO}(4,2)(F_v)}\circ j)|_{\uuuu(F_v)}=\pi(\psi_v,\eta').$$
\end{prop}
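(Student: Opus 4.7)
The plan is to verify this in two stages: first, that once transported through $j$ and restricted to $\uuuu(F_v)$, the theta lift is an irreducible element of $\Pi_{\psi_v}$ and therefore has the form $\pi(\psi_v,\eta)$ for some $\eta\in\hat{\overline{\mathcal S}}_{\psi_v}$; and second, that $\eta=\eta'$ under the natural identification $\overline{\mathcal S}_{(\phi_\chi)_v}\cong\overline{\mathcal S}_{\psi_v}$ coming from $\psi_v=(\phi_\chi)_v\boxtimes[2]$.

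For the first stage, Howe duality forces $\theta^2(\widetilde\tau')$ to be irreducible whenever nonzero, and Lemma \ref{l:coms} propagates this irreducibility to the restriction to $\gso(F_v)$. Transporting via $j^{-1}$ and restricting to $\uuuu(F_v)$ could a priori split off a conjugate copy, but Lemma \ref{L:fix} rules this out at nonarchimedean $v$ (since $n=2$ is even); at archimedean $v$ one argues analogously, using the cohomological-induction description of $\Pi_{\psi_v}$ from \cite{MR2019}. The A-parameter of the resulting representation is then identified as $\psi_v$ via a local version of the unramified calculation of Lemma \ref{L:unram}, combined with the tower constant-term identity of Lemma \ref{l:tow}.

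For the second stage, when $\overline{\mathcal S}_{\psi_v}$ is trivial -- i.e.\ $v$ splits in $E$, or $\phi_v$ is of type 1-1, 1-2, or 3 -- there is nothing to check beyond nonvanishing of the theta lift, which holds by construction. The substantive case is $\phi_v$ of type 2-1 or 2-2, where $\overline{\mathcal S}_{\psi_v}\cong\overline{\mathcal S}_{(\phi_\chi)_v}\cong\zz/2\zz$. My plan here is to globalize: given $\eta'\in\{\pm 1\}$, Lemma \ref{L:glza} produces a global automorphic $\tau=\otimes_{v'}\pi((\phi_\chi)_{v'},\eta_{v'})$ with $\eta_v=\eta'$ and $\eta_{v'}=1$ at all but one auxiliary inert finite place $v''$, chosen so that $\prod_{v'}\eta_{v'}=\epsilon_{\phi_\chi}=1$. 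Corollary \ref{C:iso} then gives $\mathcal H(\tau)\simeq\otimes_{v'}\pi(\psi_{v'},\eta_{v'}^\theta)$ for some characters $\eta_{v'}^\theta$; at unramified places and at places where the component group is trivial we already know $\eta_{v'}^\theta=\eta_{v'}$. Since $\mathcal H(\tau)\subset L^2_\psi$ by Proposition \ref{P:near}, the global multiplicity formula forces $\prod_{v'}\eta_{v'}^\theta=\epsilon_\psi=1$, and combining this with the known matchings pins down both $\eta_v^\theta=\eta'$ and $\eta_{v''}^\theta=\eta_{v''}$.

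The main obstacle will be the archimedean type-2-2 case, in which the Aubert-involution argument of (e) in \S\ref{endo-prop} is unavailable: there the character identification must be extracted directly from the cohomological-induction description of $\Pi_{\psi_v}$ referenced in \S\ref{det}. A secondary point is to ensure that the auxiliary inert place $v''$ can always be chosen so that the globalization yields enough constraints to isolate $\eta_v^\theta$ individually rather than only the product $\eta_v^\theta\eta_{v''}^\theta$; a place where $(\phi_\chi)_{v''}$ falls into a trivial-component-group type suffices, and its existence should follow from standard density considerations for $\pi$.
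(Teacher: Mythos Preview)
Your globalization strategy in the second stage has a genuine gap that cannot be repaired without introducing exactly the kind of local computation the paper carries out.

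Suppose $\phi_v$ is of type 2-1 or 2-2 and $\eta'=-1$. Following Lemma~\ref{L:glza}, any globalization with $\eta_v=-1$ requires an auxiliary inert finite place $v''$ with $\eta_{v''}=-1$ as well (so that $\prod_{v'}\eta_{v'}|_{\overline{\mathcal S}_\psi}=1$). The Arthur constraint on $\mathcal H(\tau)\subset L^2_\psi$ then gives only $\eta_v^\theta\cdot\eta_{v''}^\theta=1$, not the individual values. Your proposed remedy is to choose $v''$ with trivial local component group, but this is impossible: as recorded in \S\ref{im:type}, when $\phi$ is of global type~2 one has $\phi_{v''}$ of type 2-1 or 2-2 at \emph{every} inert place, so $\overline{\mathcal S}_{\psi_{v''}}\cong\zz/2\zz$ always. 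If instead the global $\phi$ is of type~1, then $\overline{\mathcal S}_\psi$ is trivial and the multiplicity formula imposes no constraint whatsoever on the local characters. In neither case does the global argument isolate $\eta_v^\theta$. The unramified matching you invoke only covers places where $\eta_{v'}=1$; it says nothing at $v''$.

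The paper avoids this entirely by working locally. For nonarchimedean $v$ of type 2-1 or 2-2 it first establishes $\theta^0(\tilde\tau_+)=\mu_1$ via the see-saw for $\mathrm{GU}(1)\subset\mathrm{GO}(2,0)$ (Lemma~\ref{L:first}), then invokes the conservation relation to get $\theta^1(\tilde\tau_-)=0$, and finally distinguishes $\theta^2(\tilde\tau_\pm)|_{\uuuu(F_v)}$ by computing their L-parameters directly (supercuspidality in type~2-1; an explicit standard-module calculation via \cite{atobe2017local} in type~2-2). For archimedean $v$ it matches $K$-types using \cite{PAUL2005270} against the Blattner-formula description coming from Lemma~\ref{mr}.

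A smaller point: your first-stage claim that the restriction to $\uuuu(F_v)$ is irreducible is false at archimedean $v$ in type~2. Lemma~\ref{mr} shows $\pi(\psi_v,-1)=A_{\mathfrak q^-_+}(\lambda^-_{k,+})\oplus A_{\mathfrak q^-_-}(\lambda^-_{k,-})$ is genuinely a sum of two irreducibles, and the paper's proof verifies that the theta lift of $\tilde\tau_-$ restricted to $\uuuu(F_v)$ contains both summands. So ``irreducible element of $\Pi_{\psi_v}$'' should be ``sum of elements of $\Pi_{\psi_v}$ with the same character'', and establishing which character again requires the direct $K$-type argument rather than anything global.
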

We show this proposition at the bottom of this section.
It leads Theorem \ref{T: her} as follows.
\begin{proof}[Proof of Theorem \ref{T: her}]
By Lemma 4.5 and Proposition \ref{T:locikeda}, we have $\mathcal H (\tau)\simeq I_2(\tau)$ abstractly.
Therefore we have to show that $L_\psi^2$ is multiplicity free, but we have seen it (Corollary \ref{c:mf}).
\end{proof}

To prove Theorem \ref{T:locikeda}, we use the following three lemmas.
\begin{lemm}\label{pac}
Each subrepresentation of $(\theta^2(\widetilde\tau')|_{\mathrm{GSO}(4,2)(F_v)}\circ j)|_{\uuuu(F_v)}$ is an element of $\Pi_{\psi_v}$.
\end{lemm}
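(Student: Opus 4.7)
The plan is a global-to-local reduction using the machinery already in place. Let $\sigma$ be an irreducible subrepresentation of $(\theta^2(\widetilde\tau')|_{\mathrm{GSO}(4,2)(F_v)}\circ j)|_{\uuuu(F_v)}$; the goal is to show $\sigma\in\Pi_{\psi_v}$.

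First I would globalize. Since $(\phi_\chi)_v$ is the localization of the global discrete A-parameter $\phi_\chi$ of $\uu$, Lemma \ref{L:glza} yields a global cuspidal $\tau\subset L^2_{\phi_\chi}$ with $\tau_v\cong\tau'$. Form $\widetilde\tau=(\tau\boxtimes\chi)|_{\gll_{2,\aaf}^+}$, take the global theta lift $\theta^2(\widetilde\tau)$ to $\goa$, and define the Hermitian Maass lift $\mathcal H(\tau)$; its nonvanishing follows from \cite[Proposition 10.1]{yamana2014functions} together with the nonvanishing of the relevant local theta lifts. Then Proposition \ref{P:near} gives $\mathcal H(\tau)\subset L^2_\psi$, and Corollary \ref{C:iso} yields the factorization
$$\mathcal H(\tau)\;\cong\;\bigotimes_{v'}\bigl(\theta^2(\widetilde\tau_{v'})|_{\mathrm{GSO}(4,2)(F_{v'})}\circ j\bigr)\bigl|_{\uuuu(F_{v'})}$$
as representations of $\uuuua$.

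Every irreducible constituent of the right-hand side is a restricted tensor product $\bigotimes_{v'}\pi_{v'}$ of irreducible subrepresentations of the local factors, and under the above isomorphism corresponds to an irreducible automorphic subrepresentation of $L^2_\psi$; its global A-parameter is therefore $\psi$, so each local factor $\pi_{v'}$ lies in $\Pi_{\psi_{v'}}$. Specializing to $\pi_v=\sigma$ (with arbitrary choices at the finitely many remaining archimedean places) yields $\sigma\in\Pi_{\psi_v}$, as required. The one delicate point is the globalization step: the signs $\eta_{v'}$ chosen in Lemma \ref{L:glza} at the auxiliary places must be arranged so that $\mathcal H(\tau)$ is nonzero, but this is readily achieved using the flexibility already exploited in the proof of Lemma \ref{L:glza} combined with the standard cuspidal nonvanishing of $\theta^2$ for similitudes. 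Once this is in place, Proposition \ref{P:near} and Corollary \ref{C:iso} immediately deliver the conclusion.
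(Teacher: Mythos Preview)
Your argument is correct and follows the same route as the paper: globalize $\tau'$ via Lemma \ref{L:glza}, invoke Proposition \ref{P:near} and Corollary \ref{C:iso}, and read off that every local constituent at $v$ lies in $\Pi_{\psi_v}$. The only superfluous part is your caveat about arranging the signs $\eta_{v'}$ so that $\mathcal H(\tau)\neq 0$: as noted in \S\ref{her-ov}, the nonvanishing of $\theta^2(\tilde\tau)$ (hence of $\mathcal H(\tau)$) holds for \emph{any} irreducible $\tau\subset L^2_{\phi_\chi}$, so no special choice is needed.
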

\begin{proof}
Since we can take a globalization of $\tau'$ by Lemma \ref{L:glza}, Corollary \ref{C:iso} leads this lemma immediately.
\end{proof}

%
%

In the remaining two lemmas we assume that $\phi_v$ is of type 2-1 or 2-2 i.e. $\widehat{\overline{\mathcal S}}_{(\phi_\chi)_v}\simeq\widehat{\overline{\mathcal S}}_{\psi_v} \simeq\{\pm1\}$.
If $\phi_v$ is of such type, then we put $\tau_{\pm}=\pi((\phi_{\chi})_v,\pm1)$ for  short.

\begin{lemm}\label{L:first}
Let $v$ be a nonarchimedean place of $F$ which does not split in $E$.
We assume that $\phi_v$ is of type 2-1 or 2-2 i.e. $\phi_v=\mu_1\oplus\mu_1^{-1}$ for some character $\mu_1$ of $E_v\baz$ such that $\mu_1|_{F_v\baz}=1$ ($\phi_v$ is of type 2-2 if and only if $\mu_1=\mu_1^{-1}$).
Then,
 $\theta^0(\tilde \tau_{+})=\mu_1$ holds.
\end{lemm}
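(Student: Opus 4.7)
The plan is to apply the local see-saw identity of \S\ref{SS: tl} for the dual pairs $(\gll_2^+, \mathrm{GO}(2,0))$ and $(\mathrm{GU}(1,1), \mathrm{GU}(1))$, with splitting character $\chi_v$ on the unitary side (admissible since $\chi_v|_{F_v^\times} = \epsilon_{E_v/F_v}$). Taking $k = 0$, $\sigma' = \mu_1$, and $\tau' = \tilde\tau_+$ yields
\[
\mathrm{Hom}_{\gll_2^+}\bigl(\Theta^{\chi_v}(\mu_1),\,\tilde\tau_+\bigr) \;\simeq\; \mathrm{Hom}_{\mathrm{GU}(1)}\bigl(\Theta^0(\tilde\tau_+),\,\mu_1\bigr),
\]
so it suffices to show that the left-hand side is nonzero.

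First I would compute $\Theta^{\chi_v}(\mu_1)$ explicitly. By property (b) of \S\ref{endo-prop}, the hypothesis $\mu_1|_{F_v^\times} = 1$ allows us to write $\mu_1 = \check\gamma$ for a character $\gamma$ of $\mathrm{U}(1)(F_v)$. A direct Schr\"odinger-model calculation for the rank-one unitary dual pair $\mathrm{U}(1) \times \mathrm{U}(1,1)$ with splitting character $\chi_v$ identifies the theta lift of $\gamma$ as an irreducible principal series of $\mathrm{U}(1,1)(F_v)$ with L-parameter $\mu_1\chi_v \oplus \mu_1^{-1}\chi_v = (\phi_\chi)_v$ (this is a bona fide conjugate-self-dual parameter of parity $-1$, since $\chi_v\chi_v^c = 1$ follows from $\chi_v|_{F_v^\times} = \epsilon_{E_v/F_v}$). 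Extending by the central character $\chi_v$ and restricting to $\gll_2^+$ then yields the explicit form of $\Theta^{\chi_v}(\mu_1)$.

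Second, I would verify $\tilde\tau_+$ appears as a quotient. By property (g) of \S\ref{endo-prop}, $\tau_+ = \pi((\phi_\chi)_v, +1)$ is the unique $\mathfrak{w}_v$-generic member of $\Pi_{(\phi_\chi)_v}$, so it arises as the generic irreducible quotient of $\mathrm{Ind}_B^{\mathrm{U}(1,1)}(\mu_1\chi_v)$. Hence $\tau_+ \boxtimes \chi_v$ is a quotient of the corresponding principal series of $\mathrm{GU}(1,1)$, and its restriction $\tilde\tau_+$ is a quotient of $\Theta^{\chi_v}(\mu_1)|_{\gll_2^+}$. The see-saw then gives $\Theta^0(\tilde\tau_+) \neq 0$ together with a surjection onto $\mu_1$ as $\mathrm{GU}(1)$-representations; by Howe duality (\S\ref{SS: tl}) the maximal semisimple quotient $\theta^0(\tilde\tau_+)$ is irreducible, which forces $\theta^0(\tilde\tau_+) = \mu_1$.

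The principal obstacle is the first step: the explicit computation of $\Theta^{\chi_v}(\mu_1)$ requires careful tracking of the conventions from \S\ref{SS: tl} --- the splitting character, the Weil constant $\gamma_{E_v/F_v}$, and the compatibility between the orthogonal-symplectic and unitary Schr\"odinger realizations on the overlap $\gll_2^+ \times \mathrm{GU}(1)$ --- in order to pin down that the resulting principal series has L-parameter exactly $(\phi_\chi)_v$ and not some twist thereof. A subsidiary concern is possible reducibility of $\tilde\tau_+$ as a $\gll_2^+$-representation, but this does not obstruct the Hom-nonvanishing argument.
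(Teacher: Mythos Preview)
Your see-saw strategy is exactly the paper's. The divergence is in how you pin down that the unitary theta lift of $\mu_1$ lands on $\tau_+$ rather than $\tau_-$. The paper does no computation here: it simply invokes \cite[Proposition~4.5]{gan2016gross}, which directly gives $\theta^{\chi_v\mu_1^{-1}}(\mu_1)=\tau_+\boxtimes\chi_v$ as a representation of $\guu_{F_v}^+$. (Note the splitting character is $\chi_v\mu_1^{-1}$, not your $\chi_v$; both satisfy $\rho|_{F_v^\times}=\epsilon_{E_v/F_v}$, but the paper's choice is the normalization for which the cited result is stated.) From there the equality of Hom-spaces over $\gll_{2,F_v}^+$ and $\guu_{F_v}^+$ via matching central characters, together with Howe duality, finishes the proof in two lines.

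Your proposed route has a genuine gap at precisely the step you yourself flag. A Schr\"odinger-model calculation will at best show that $\theta^{\chi_v}(\mu_1)$ has L-parameter $(\phi_\chi)_v$, i.e.\ that it lies in $\Pi_{(\phi_\chi)_v}=\{\tau_+,\tau_-\}$. But the packet has two members and the parameter alone does not distinguish them. Your second step tries to close this by noting that $\tau_+$ is the $\mathfrak w_v$-generic quotient of $\mathrm{Ind}_{B}^{\uu}(\mu_1\chi_v)$ and then concluding that $\tilde\tau_+$ is a quotient of $\Theta^{\chi_v}(\mu_1)$; but that inference requires $\Theta^{\chi_v}(\mu_1)$ to surject onto the full induced representation, which you have not established and which in fact fails: for this almost-equal-rank dual pair $\Theta^{\chi_v}(\mu_1)=\theta^{\chi_v}(\mu_1)$ is already irreducible, hence a single constituent of the (reducible) principal series, not the whole thing. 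So the genericity argument does not connect, and you are left needing exactly the statement that the theta lift picks out the generic member of the packet --- which is the content of the result the paper cites. You could reprove that, but it is not a routine consequence of writing down the Schr\"odinger model.
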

\begin{proof}
By the the following see-saw

$$
\xymatrix{
   \mathrm {GO}(2,0)   \ar@{-}[d]   & &   \guu^+ \ar@{-}[d]     \\
   \mathrm{GU}(1)   \ar@{-}[urr]     & &   \gll_2^+,\ar@{-}[ull]   \\
}
$$
the equation
$$\dim_\cc\mathrm{Hom}_{\mathrm{GU}(1)(F_v)}(\Theta^0(\tilde \tau_{+}), \mu_1)=\dim_\cc\mathrm{Hom}_{\gll_{2,F_v}^+}(\Theta^{\chi_v\mu_1^{-1}}(\mu_1), \tau_+)$$ holds.
By \cite[Proposition 4.5]{gan2016gross},
$$\tau_{+}\boxtimes\chi_v=\theta^{\chi_v\mu_1^{-1}}(\mu_1)$$
so that
\begin{align*}
\dim_\cc\mathrm{Hom}_{\guu_{F_v}^+}(\Theta^{\chi_v\mu_1^{-1}}(\mu_1), \tau_{+}\boxtimes\chi_v)
=1
\end{align*}
Since $\guu^+_{F_v}=E_v\baz\gll_{2,F_v}$ and the central character of $\Theta^{\chi_v\mu_1^{-1}}(\mu_1)$ is $\chi_v$,
$$\dim_\cc\mathrm{Hom}_{\gll_{2,F_v}^+}(\Theta^{\chi_v\mu_1^{-1}}(\mu_1), \tau_+)=\dim_\cc\mathrm{Hom}_{\guu_{F_v}^+}(\Theta^{\chi_v\mu_1^{-1}}(\mu_1), \tau_{+}\boxtimes\chi_v)$$
holds.
Thus $\theta^0(\tilde \tau_{+})=\mu_1$ holds.
\end{proof}

In the last lemma we assume that $v$ is archimedean.
Before stating it, we prepare some notations.
We put
$$G=\left\{g\in \gll_4(\cc)\left| ^tg^c
\left(\begin{smallmatrix}
1_2&0\\
0&-1_2
\end{smallmatrix}\right)
g = 
\left(\begin{smallmatrix}
1_2&0\\
0&-1_2
\end{smallmatrix}\right)\right. \right\}$$
and we define an involution $\vartheta$ on $G$ by $\vartheta(g)=(^tg^c)^{-1}$.
Let
$$K=\left\{g\in G\ | \ g= \vartheta(g)\right\} , \ T=\{\mathrm{diag}(e^{i\theta_1},\cdots,e^{i\theta_4})\ |\ \theta_i\in\rr\}\subset K.$$
Then $K$ is a maximal compact subgroup of $G$ and $T$ is its maximal compact torus.
If $v$ is a place of $F$ such that $E_v/F_v=\cc/\rr$, we identify $G$ and $\bu(4)(F_v)$ by the following isomorphism

\begin{align*}
G &\bij \bu(4)(F_v)\\
g&\mapsto g_0g g_0^{-1},
\end{align*}
where 
\begin{align*}g_0=\sqrt{2}^{-1}
\begin{pmatrix}
1&&&-1\\
&1&-1&\\
&1&1&\\
1&&&1
\end{pmatrix}\in\gll_2(\rr).
\end{align*}
We denote by $\mathfrak g_0, \mathfrak k_0, \mathfrak t_0$ the Lie algebras of $G, K, T$, respectively.
Let $\mathfrak g, \mathfrak k, \mathfrak t$ be the complexifications of $\mathfrak g_0, \mathfrak k_0, \mathfrak t_0$, respectively. 
We define $e_j, f_j\in i\mathfrak t_0^*\subset\mathfrak t^*$ ($j=1,2$) by
\begin{align*}
e_j:\mathrm{diag}(a_1,\cdots.a_4)&\mapsto a_i\\
 f_j:\mathrm{diag}(a_1,\cdots.a_4)&\mapsto a_{2+j}.
 \end{align*}
Let $\mathfrak g=\mathfrak k + \mathfrak p$ be a Cartan decomposition. Then, the roots of $\mathfrak k$ and $\mathfrak p$ relative to  $\mathfrak  t$ are
$$\Delta(\mathfrak k,\mathfrak t)=\{\pm(e_1-e_2), \pm(f_1-f_2)\}, \ \Delta(\mathfrak p)=\{\pm(e_i-f_j), | i,j\in\{1,2\}\},$$
respectively.
We put
$$\Delta_{l,+}=\{\pm(e_1-f_2),\pm(e_2-f_1)\}, \ \Delta_{l,-}=\{\pm(e_1-e_2),\pm(f_1-f_2)\}$$
and  
$$\Delta_{v,+}=\{(e_1-e_2),-(f_1-f_2),(e_1-f_1),-(e_2-f_2)\}, \Delta_{v,-,\sigma}=\{\sigma(e_i-f_j) \ | \ i,j\in\{1,2\}\}$$
for $\sigma=\pm1$.
We define $\vartheta$-stable parabolic subalgebras $\mathfrak q^+$ and $\mathfrak q^-_\sigma$ ($\sigma=\pm$) of $\mathfrak g$ by
$$\mathfrak q^+=\mathfrak l^+\oplus\mathfrak v^+, \ \mathfrak q^-_{\sigma}=\mathfrak l^-\oplus\mathfrak v^-_{\sigma},$$
where $\mathfrak l^{+}, \ \mathfrak l^{-}$ are Levi subalgebras of $\mathfrak q^+$ and $\mathfrak q^-_\sigma$ defined by
$$\mathfrak l^{+}=\mathfrak t\oplus\bigoplus_{\alpha\in\Delta_{l,+}}\mathfrak g_\alpha,\ \mathfrak l^{-}=\mathfrak t\oplus\bigoplus_{\alpha\in\Delta_{l,-}}\mathfrak g_\alpha,\ $$
and $\mathfrak v^{+}$, $\mathfrak v^{-}_{\sigma}$ are nilradicals of $\mathfrak q^+$ and $\mathfrak q^-_\sigma$ defined by
$$ \mathfrak v^{-}_{\sigma}=\bigoplus_{\alpha\in\Delta_{v,-,\sigma}}\mathfrak g_\alpha, \  \mathfrak v^{+}=\bigoplus_{\alpha\in\Delta_{v,+}}\mathfrak g_\alpha.$$

Then, by \cite{MR2019}, the following holds.
\begin{lemm}\label{mr}
Let $v$ be an archimedean place of $F$ such that $E_v/F_v=\cc/\rr$.
We assume that $\phi_v$ is of type 2-1 or 2-2 i.e. $\phi_v=\chi_{2k}\oplus\chi_{-2k}$ for some $k\in\zz_{\geq0}$  ($\phi_v$ is of type 2-2 if and only if $k=0$), where $\chi_t \ (t\in\zz)$ is defined by $\chi_t(r e^{i\theta})=e^{it\theta}$ for $r\in\rr_{>0}, \theta\in\rr/2\pi \zz$. Then, the elements of A-packet which corresponds to $\psi_v=\chi_{2k}[2]\oplus\chi_{-2k}[2]$ are described by using cohomological induction (in \cite{knapp1995cohomological}):
\begin{align*}\pi(\psi_v, 1)&=A_{\mathfrak q^+}(\lambda^+_k),\\
\pi(\psi_v,-1)&=\bigoplus_{\sigma=\pm 1}A_{\mathfrak q^-_{\sigma}}(\lambda^-_{k,\sigma}),
\end{align*}
where
\begin{align*}\lambda^+_k&=(k-1)(e_1-e_2-f_1+f_2),\\
\lambda^-_{k,\sigma}&=\sigma(k-1)(e_1+e_2-f_1-f_2).
\end{align*}
\end{lemm}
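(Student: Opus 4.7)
The plan is to invoke the explicit construction of Arthur packets for real unitary groups due to Moeglin and Renard \cite{MR2019}, which covers precisely the Adams--Johnson type parameters to which $\psi_v = \chi_{2k}[2]\oplus\chi_{-2k}[2]$ belongs. By \cite{MR2019}, the elements of $\Pi_{\psi_v}$ are of the form $A_{\mathfrak q}(\lambda)$ (cohomological induction from a unitary character $\lambda$ of a $\vartheta$-stable Levi $\mathfrak l$) indexed by the $\vartheta$-stable parabolic subalgebras $\mathfrak q \supset \mathfrak l$ whose Levi decomposition is compatible with the block decomposition $\psi_v = \chi_{2k}[2]\oplus\chi_{-2k}[2]$.

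Next I would enumerate the admissible pairs $(\mathfrak q,\lambda)$. The block decomposition forces the Levi $\mathfrak l$ to be a direct sum of two rank-one unitary subalgebras, one for each summand. At the root-system level this leaves exactly two choices of Levi: $\mathfrak l^+$ with roots $\Delta_{l,+}$ (noncompact roots), of type $\mathfrak u(1,1)\oplus\mathfrak u(1,1)$, giving the unique $\vartheta$-stable parabolic $\mathfrak q^+$; and $\mathfrak l^-$ with roots $\Delta_{l,-}$ (compact roots), of type $\mathfrak u(2)\oplus\mathfrak u(2)$, giving two $\vartheta$-stable parabolics $\mathfrak q^-_{\sigma}$ distinguished by the sign $\sigma$ of the nilradical $\mathfrak v^-_\sigma$. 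Solving for $\lambda$ so that $\lambda + \rho(\mathfrak v)$ realises the eigenvalues of $\phi_{\psi_v}$ on $\mathfrak t$ pins down precisely the $\lambda^+_k$ and $\lambda^-_{k,\sigma}$ stated.

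Finally I would match the labeling by $\widehat{\overline{\mathcal S}}_{\psi_v} \simeq \{\pm 1\}$. Under the Moeglin--Renard normalisation the trivial character is assigned to the Adams--Johnson module whose Levi has the most noncompact signature, namely $A_{\mathfrak q^+}(\lambda^+_k)$; thus $\pi(\psi_v,1) = A_{\mathfrak q^+}(\lambda^+_k)$, and the remaining modules $A_{\mathfrak q^-_{\sigma}}(\lambda^-_{k,\sigma})$ for $\sigma = \pm 1$ together form $\pi(\psi_v,-1)$. The main obstacle is reconciling the character normalisation of $J_{\psi_v}$ in \cite{MR2019} with that of \cite{mok2015endoscopic} used throughout the paper; I would resolve this by comparing both sides against the distinguished generic member of $\Pi'_{\phi_{\psi_v}}$ (property (d) in \S\ref{endo-prop}), which in either convention must pair with the trivial character and which the Aubert--Johnson dictionary identifies with $A_{\mathfrak q^+}(\lambda^+_k)$.
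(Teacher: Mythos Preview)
Your approach is essentially the same as the paper's: the paper does not give an independent argument for this lemma but simply records it as a consequence of \cite{MR2019}, and your proposal is a (reasonable) unpacking of what that citation yields. One small caution: your final step, identifying $\pi(\psi_v,1)$ via the ``distinguished generic member of $\Pi'_{\phi_{\psi_v}}$'', is shakier than it looks, since $\phi_{\psi_v}$ is not tempered here and the packet $\Pi'_{\phi_{\psi_v}}$ is a singleton Langlands quotient rather than a tempered packet with a canonical generic element; the cleaner way (and the one implicit in the paper's use of property (d) in \S\ref{endo-prop}) is simply that $\#\Pi'_{\phi_{\psi_v}}=1$ forces that unique element to land on the trivial character, and it is this Langlands quotient that one identifies with $A_{\mathfrak q^+}(\lambda^+_k)$.
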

\begin{rem}
\begin{itemize}
\item Since $\lambda^+_k$ and $\lambda^-_{k,\sigma}$ are in weakly fair range, $\pi(\psi_v, \pm1)$ are unitary.
Moreover, each component of $\pi(\psi_v, \pm1)$ above is irreducible if $k\geq 1$ since $\lambda^+_k$ and $\lambda^-_{k,\sigma}$ are in good range in such case.
Though $\lambda^+_0$ and $\lambda^-_{0,\sigma}$ is not in weakly good range, each component of $\pi(2\chi_0[2], \pm1)$ is zero or irreducible as mentioned in \cite[p.10]{MR2019}.
We recall that $\pi(\psi',1)\neq0$ for any $\psi'$.
\item By Blattner formula,
the highest weights of $K$-types of $A_{\mathfrak q^+}(\lambda^+_k)$ are contained in
$$S^+_k=k(e_1-e_2-f_1+f_2)+\sum_{\alpha\in \Delta(\mathfrak p)\cap \Delta_{v,+}}\zz_{\geq0}\alpha$$
and the highest weights of $K$-types of $A_{\mathfrak q^-_{\sigma}}(\lambda^-_{k,\sigma})$ are contained in
$$S^-_{k,\sigma}=\sigma\left((k+1)(e_1+e_2-f_1-f_2)+\sum_{\alpha\in  \Delta(\mathfrak p)\cap\Delta_{v,-,+}}\zz_{\geq0}\alpha\right).$$
\end{itemize}
\end{rem}

Then, the proof of Theorem \ref{T: her} is as follows.
\begin{proof}[Proof of Theorem \ref{T: her}]
We proof this theorem by three cases:
\begin{enumerate}
\renewcommand{\labelenumi}{(\arabic{enumi})}
\item $\Pi_{\psi_v}$ is singleton i.e. $v$ splits in $E$ or $\phi_v$ is of type 1-1, 1-2, or 3,
\item $v$ is nonarchimedean and $\phi_v$ is of type 2-1 or 2-2,
\item $v$ is archimedean and $\phi_v$ is of type 2-1 or 2-2.
\end{enumerate}
Let us see them from the top.

We assume (1).
Then, by Lemma \ref{pac}, it is sufficient to show that the unique element of $\Pi_{\psi_v}$, which is described as an induced representation, is irreducible.
If $v$ is split or $\phi_v$ is of type 3, then it is led by the Ramanujan bound.
If $\phi_v$ is of type 1-1, 1-2, then this it holds by Lemma \ref{L:fr}.

We assume (2).
Then, $\theta^2(\tilde \tau_{\pm})|_{\uuuu(F_v)}$ is irreducible by Lemma \ref{L:fix} and by Lemma \ref{pac}.
Moreover,
\begin{itemize}
\item if $\theta^2(\tilde \tau_{-})|_{\uuuu(F_v)}\neq\pi(\psi_v,1)$, then  $\theta^2(\tilde \tau_{-})|_{\uuuu(F_v)}=\pi(\psi_v,-1)$ and
\item if $\theta^2(\tilde \tau_{+})|_{\uuuu(F_v)}\neq\theta^2(\tilde \tau_{-})|_{\uuuu(F_v)}$, then  $\{\theta^2(\tilde \tau_{\pm})|_{\uuuu(F_v)}\}=\{\pi(\psi_v,\pm1)\}.$
\end{itemize}
Thus it is sufficient to show that $\theta^2(\tilde \tau_{-})|_{\uuuu(F_v)}\neq\pi(\psi_v,1)$ and $\theta^2(\tilde \tau_{+})|_{\uuuu(F_v)}\neq\theta^2(\tilde \tau_{-})|_{\uuuu(F_v)}.$
By Lemma \ref{L:first} and the conservation relation \cite[Theorem 1.10]{sun2015conservation} and \cite[Lemma 3.1]{gan2009restrictions},  we have $\theta^1(\tilde\tau_{-})=0$ and $\theta^2(\tilde\tau_{-})\neq0$. 

First, let $\phi_v$ is type 2-1. 
Then $\Pi_{(\phi_\chi)_v}$ is a supercuspidal L-packet.
Therefore $\theta^2(\tilde\tau_{-})$ is supercuspidal and $\theta^2(\tilde\tau_{+})$ is not since $\theta^1(\tilde\tau_{-})=0$ and Lemma \ref{L:first}.
Especially $\theta^2(\tilde \tau_{+})|_{\uuuu(F_v)}\neq\theta^2(\tilde \tau_{-})|_{\uuuu(F_v)}$.
Since the element of $\Pi_{\phi_{\psi_v}}$ is not supercuspidal, so that $\theta^2(\tilde \tau_{-})|_{\uuuu(F_v)}\neq\pi(\psi_v,1)$.

Second, let $\phi_v=2\mu_1$ is type 2-2. Then, $\Pi_{(\phi_\chi)_v}$ is equal to the set of irreducible subrepresentation of 
$$\ind^{\uu(F_v)}_{B_2(F_v)}\mu_1\chi_v.$$
Therefore $\{\tau^0_{\pm}=\tau_{\pm}|_{\sll_2(F_v)}\}$ is the L-packet of $\sll_2(F_v)$ with L-parameter $\phi_0=2\epsilon_{E_v/F_v}\oplus \bf1$ since 
$$\ind^{\uu(F_v)}_{B_2(F_v)}\mu_1\chi_v|_{\sll_2(F_v)}=\ind^{\sll_2(F_v)}_{(B_2\cap \sll_2)(F_v)}\epsilon_{E_v/F_v}.$$
By \cite[Lemma 3.1]{gan2009restrictions},
$$\theta^2(\tau^0_{\pm})|_{\mathrm {SO} (4, 2)(F_v)}=\theta^2(\tilde \tau_{\pm})|_{\mathrm {SO} (4, 2)(F_v)}.$$
By \cite[Theorem 4.1]{atobe2017local}, $\theta^2(\tau^0_{-})|_{\mathrm {SO} (4, 2)(F_v)}$ is the unique quotient of 
$${\rm St}(|\cdot|_{F_v}^{\frac{1}{2}}) \rtimes 1_{{\rm SO}(2,0)}.$$
Here, St$(\chi')$ is the unique subrepresentation of 
$$(\chi'\circ\det)\otimes\ind^{\gll_2}_B|\cdot|_{F_v}^{\frac{1}{2}}\boxtimes|\cdot|_{F_v}^{-\frac{1}{2}}$$
  ($B$ is the the subgroup of $\gll_2$ which consists of upper triangular matrices) for a character $\chi'$ of $F_v\baz$.
By the Frobenius reciprocity, $\theta^2(\tilde \tau_{-})|_{\mathrm {GSO} (4, 2)(F_v)}$ is the unique irreducible quotient of 
$${\rm St}(|\cdot|_{F_v}^{\frac{1}{2}}) \rtimes |\cdot|^{-\frac{1}{2}}_{E_v}\zeta$$
for some character $\zeta$ of $E_v\baz$ which satisfies $\zeta|_{F_v\baz}=1$.
By the formulae in \S\ref{her-isom}, it is easy to verify that
$${\rm St}(|\cdot|_{F_v}^{\frac{1}{2}}) \rtimes |\cdot|^{-\frac{1}{2}}_{E_v}\zeta\circ j=|\cdot|^{\frac{1}{2}}_{E_v}\zeta\rtimes ({\rm St}(1)\boxtimes \zeta)$$
as a representation of $\guuuu(F_v)$,
so $\theta^2(\tilde \tau_{- 1})|_{\uuuu(F_v)}$ is the unique quotient of 
$$|\cdot|^{\frac{1}{2}}_{E_v}\zeta\rtimes  ({\rm St}(1)\boxtimes \zeta|_{\uu(F_v)}),$$
so the L-parameter of $\theta^2(\tilde \tau_{-})|_{\uuuu(F_v)}$ is $\zeta|\cdot|^{\frac{1}{2}}_{F_v}\oplus \zeta[2]\oplus \zeta^*|\cdot|_{F_v}^{-\frac{1}{2}}$.
Thus we have 
$$\theta^2(\tilde \tau_{-})|_{\uuuu(F_v)}\neq\pi(\psi_v,1).$$
Similarly, $\theta^2(\tilde \tau_{+})|_{\uuuu(F_v)}$ has L-parameter $2\xi|\cdot|^{\frac{1}{2}}_{F_v}\oplus 2\xi^*|\cdot|^{-\frac{1}{2}}_{F_v}$ for some $\xi$.
Especially, $\theta^2(\tilde \tau_{+})|_{\uuuu(F_v)}\neq\theta^2(\tilde \tau_{-})|_{\uuuu(F_v)}$.

We assume (3).
Then, it is sufficient to show the following:
\begin{enumerate}
\renewcommand{\labelenumi}{(\roman{enumi})}
\item the irreducible representation of $K$ whose highest weight is $\mu_k^+=k(e_1-e_2-f_1+f_2)$ is a $K$-type of $(\theta^2(\widetilde\tau_+)|_{\mathrm{GSO}(4,2)(F_v)}\circ j)|_{\uuuu(F_v)}$,
\item for $\sigma=\pm$, the irreducible representation of $K$ whose highest weight is $\mu_{k,\sigma}^-=\sigma k(e_1-e_2-f_1+f_2)$ is a $K$-type of $(\theta^2(\widetilde\tau_-)|_{\mathrm{GSO}(4,2)(F_v)}\circ j)|_{\uuuu(F_v)}$.
\end{enumerate}
Actually, since $\mu_{k,\pm}^-\not\in S_{k}^+,$ and $\mu_{k,\pm}^-\in S_{k,\pm}^-$, $\pi(\psi_v,-1)$ is a subrepresentation of $(\theta^2(\widetilde\tau_-)|_{\mathrm{GSO}(4,2)(F_v)}\circ j)|_{\uuuu(F_v)}$ and $A_{\mathfrak q^-_{\sigma}}(\lambda^-_{k,\sigma})\neq0$ if (i) is true.
$(\theta^2(\widetilde\tau_-)|_{\mathrm{GSO}(4,2)(F_v)}\circ j)|_{\uuuu(F_v)}$ is a sum of at most two distinct irreducible representations, so $\pi(\psi_v,-1)=(\theta^2(\widetilde\tau_-)|_{\mathrm{GSO}(4,2)(F_v)}\circ j)|_{\uuuu(F_v)}$.
Similarly, we have $\pi(\psi_v,1)=(\theta^2(\widetilde\tau_+)|_{\mathrm{GSO}(4,2)(F_v)}\circ j)|_{\uuuu(F_v)}$ if (ii) is true.

We denote by $K'$ the maximal compact subgroup of ${\rm GO}(4,2)(F_v)$ (and ${\rm O}(4,2)(F_v)$) which contains (the pullback of) $j^{-1}(K)$.
Firstly we can see that $\tilde\tau_\pm|_{\sll_2(F_v)}=D_{\pm{(2k+1)}}$, where $D_t$ is the (limit of) discrete series whose minimal 
$\bu(1)(\rr)$-type is $(t):e^{i\theta}\mapsto e^{it\theta}$ for $t\in\zz$.
Since $(\pm(2k+1))$ is of minimal degree in $\bu(1)(\rr)$-types of $D_{\pm{(2k+1)}}$, there is a $K'$-type of 
$\theta^2(D_{\pm{(2k+1)}})=\theta^2(\tilde \tau_{\pm})|_{{\rm O}(4,2)(F_v)}$
 which correspond to $(\pm(2k+1))$ and we can compute them explicitly by \cite[Proposition 4]{PAUL2005270}.
Since ${\rm GO}(4,2)(F_v)=F_v\baz{\rm O}(4,2)(F_v)$ and the central character of $\theta^2(\tilde \tau_{\pm})$ is trivial, we have (i) and (ii) by simple computation using the formulae in \S\ref{her-isom}.
(we note that $(\mu_k^+)^g=\mu_k^+$ and  $(\mu_{k,+}^-)^g=\mu_{k,-}^-$
for $g\in\guuuu(F_v)\setminus E_v\baz \uuuu(F_v)$). 

\end{proof}

\section{The proof of the main theorem}\label{S:mt}
Let $\pi$, $\chi$, $\phi$, $\phi_{\chi}$, $\psi$ be as in \S \ref{S:hm}.
 Let $\gamma$ be an automorphic character of $\bu(1)(\aaf)$ such that $\phi\neq \check\gamma\boxplus\check\gamma^{-1}$. 
Following \S\ref{miyaa}, we define $\mathcal M_{\psi}(\check\gamma)=\phi\boxplus\check\gamma^{-1}$.
Furthermore, let $\mathcal M_{I_2(\tau)}(\gamma)$ be the Miyawaki lift of $\gamma$ with respect to $I_2(\tau)$ and the following diagonal embedding
\begin{align*}i:\bu(1)\times \bu(3) &\inj \uuuu\\
(\alpha,h)&\mapsto \delta\begin{pmatrix}
\alpha&\\
&h
\end{pmatrix}\delta^{-1},
\end{align*}
where 
$$\delta=\begin{pmatrix}
 0&-\kappa&0&0\\
 \kappa/2&0&\kappa/2&0\\
 1&0&-1&0\\
 0&0&0&1
 \end{pmatrix}.$$
As already remarked in Remark \ref{emb}, choice of embedding is not essential to define $\mathcal M_{I_2(\tau)}(\gamma)$.

The aim of this section is to show the following proposition. 

\begin{prop}\label{P:pre}
$\mathcal M_{I_2(\tau)}(\gamma)$ is contained in $L^2_{\mathcal M_{\psi}(\check\gamma)}.$ Moreover,
 any irreducible subrepresentation $\sigma$ of $\mathcal M_{I_2(\tau)}(\gamma)$ satisfies that $\theta^{\chi_v\check\gamma_v^{-1}}(\sigma_v\otimes\gamma_v\circ\det)=\tau_v$ for all $v$.
\end{prop}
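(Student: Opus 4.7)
The plan is to reduce the proposition to a see-saw identity connecting the orthogonal theta lift (which realizes $I_2(\tau)=\mathcal H(\tau)$ by Theorem~\ref{T: her}) and the unitary theta lift to $\bu(3)$.

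I would first establish $\mathcal M_{I_2(\tau)}(\gamma)\subset L^2_{\mathcal M_\psi(\check\gamma)}$ by invoking Proposition~\ref{P:ee}. Its first hypothesis is that $\mathcal M_\psi(\check\gamma)=\phi\boxplus\check\gamma^{-1}$ lies in $\Psi_2(\bu(3))$; this is immediate from the standing assumption $\phi\neq\check\gamma\boxplus\check\gamma^{-1}$, which rules out the only nondiscrete possibility. Convergence of the Miyawaki integral is automatic since $\bu(1)(F)\backslash\bu(1)(\aaf)$ is compact, and square-integrability will come for free from the theta integral representation obtained below.

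Next I would expand the Miyawaki integral using Theorem~\ref{T: her}. Every $\mathcal F\in I_2(\tau)$ has the form $\mathcal F(h)=\theta^2_\varphi(f')(j(h))$ for some $\varphi\in\mathcal S(\aaf^6)$ and $f'\in\tilde\tau$, so that for $g\in\bu(3)(\aaf)$ and $f\in\gamma$,
\begin{equation*}
\mathcal M_{\mathcal F}(f)(g)=\int_{\bu(1)(F)\backslash\bu(1)(\aaf)}\int_{\sll_2(F)\backslash\sll_2(\aaf)}\theta^2_\varphi\bigl(g_0g',\,j(i(\alpha,g))\bigr)\,\overline{f'(g_0g')}\,\overline{f(\alpha)}\,dg_0\,d\alpha,
\end{equation*}
where $g'\in\gll_{2,\aaf}^+$ satisfies $\nu(g')=\nu(j(i(1,g)))$. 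Interchanging the $\alpha$- and $g_0$-integrals is harmless thanks to the compactness of the $\alpha$-domain.

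The crucial step is a see-saw identity for the pair of dual pairs $(\gll_2^+,\mathrm{GO}(4,2))$ and $(\guu^+,\mathrm{GU}(3))$, with $\mathrm{GU}(3)\hookrightarrow\mathrm{GO}(4,2)$ via $j\circ i$ and $\gll_2^+\hookrightarrow\guu^+$ as the subgroup of determinant in $\aaf^\times$. Using the explicit formulas of \S\ref{her-isom}, one checks that $j(i(\bu(1)\times\bu(3)))\subset\mathrm{GSO}(4,2)$ is the similitude subgroup preserving an orthogonal decomposition of $F^6$ compatible with an identification $F^6\simeq E^3$ coming from the Hermitian decomposition $\mathbb H_E^2=V_1\perp V_2$. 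Under this identification and after accounting for central character twists by $\chi$ and $\check\gamma$, the restriction of $\omega^2$ to $\gll_2^+\times\bu(3)$ (with $\bu(1)$ acting via $\check\gamma$) matches the restriction of the unitary Weil representation $\omega^{(1,\chi\check\gamma^{-1})}$. Integrating over $\alpha\in\bu(1)$ against $\overline{\gamma(\alpha)}$ extracts the $\check\gamma$-isotypic summand and converts the inner integral into the unitary theta kernel, giving
\begin{equation*}
\mathcal M_{\mathcal F}(f)(g)=\theta^{\chi\check\gamma^{-1}}_{\varphi_f}(f')(g)\cdot(\gamma\circ\det)(g)^{-1}
\end{equation*}
for some $\varphi_f\in\mathcal S(\aae^3)$ built from $\varphi$ and $f$.

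This identity exhibits $\mathcal M_{I_2(\tau)}(\gamma)\otimes(\gamma\circ\det)$ as the image of the global unitary theta lift $\theta^{\chi\check\gamma^{-1}}(\tilde\tau)$ restricted to $\bu(3)(\aaf)$, which is square-integrable modulo center; the trivial central character after twisting makes it $L^2$, completing the first assertion. Locally, the $v$-th component is $\Theta^{\chi_v\check\gamma_v^{-1}}(\tilde\tau_v)|_{\bu(3)(F_v)}$, so each irreducible $\sigma_v\otimes\gamma_v\circ\det$ is an irreducible quotient thereof. By Howe duality for the unitary dual pair $(\uu,\bu(3))$, the reverse theta lift satisfies $\theta^{\chi_v\check\gamma_v^{-1}}(\sigma_v\otimes\gamma_v\circ\det)=\tau_v$, as claimed.

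The principal obstacle is the see-saw step: writing down $j(i(\bu(1)\times\bu(3)))$ explicitly and verifying that the associated orthogonal decomposition of $F^6$ matches the Hermitian decomposition of $E^3$ underlying the pair $(\guu^+,\mathrm{GU}(3))$, all while keeping track of the auxiliary characters $\chi$ and $\check\gamma$. This is routine but delicate; once executed via the matrices of \S\ref{her-isom}, both assertions of the proposition drop out together.
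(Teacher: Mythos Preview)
Your see-saw strategy is the right one and is precisely what the paper uses, but your execution differs from the paper's in two respects, and one of them is a genuine gap.

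\textbf{The see-saw is a pairing identity, not a pointwise one.} The paper does not prove a formula of the shape $\mathcal M_{\mathcal F}(f)(g)=\theta^{\chi\check\gamma^{-1}}_{\varphi_f}(f')(g)\cdot(\gamma\circ\det)(g)^{-1}$. Instead (Proposition~\ref{long}) it pairs $\mathcal M_{\mathcal F}(\gamma)$ against an arbitrary cusp form $f\in\sigma\subset L^2_{\mathcal M_\psi(\check\gamma)}$ on $\bu(3)$ and, via Lemma~\ref{surp} and the see-saw for $(\gll_2^+,\mathrm{GO}(4,2))$ versus $(\guu^+,\mathrm{GU}(3))$, obtains
\[
\langle \mathcal M_{\mathcal F}(\gamma),f\rangle_{\bu(3)}=\langle \theta^{\chi\check\gamma^{-1}}(f\cdot\gamma\circ\det),f'\rangle_{\uu},
\]
hence $\sigma\subset\mathcal M_{I_2(\tau)}(\gamma)\Leftrightarrow\theta^{\chi\check\gamma^{-1}}(\sigma\otimes\gamma\circ\det)=\tau$. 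The local assertion then follows because $\tau$ is cuspidal, so the global theta lift factors as the tensor product of local theta lifts. Your pointwise formula could be made to work, but note that the $\bu(1)$-integral does not literally produce the theta integral on $\bu(3)$; it corresponds to integrating over the center $E^\times/F^\times$ of $\mathrm{GU}(3)$ against $\overline{\check\gamma}$, so an extra central-character bookkeeping step is hidden in ``$\varphi_f$''. The pairing formulation avoids this entirely.

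\textbf{Square-integrability is not free.} This is the real gap. You assert that the reverse theta lift to $\mathrm{GU}(3)$ is square-integrable modulo center, but this is exactly what the paper spends \S5.1 establishing, and it is nontrivial. When $\phi$ is of type~2 with $\phi=\mu_1\boxplus\mu_1^{-1}$, the theta lift $\theta^0(\tilde\tau)$ to $\mathrm{GO}(2,0)$ can be nonzero, so $\theta^2(\tilde\tau)$ is not cuspidal on $\mathrm{GO}(4,2)$, and its restriction to $\mathrm{GU}(3)$ need not be $L^2$ a priori. The paper shows boundedness of $\mathcal M_{\mathcal F}(\gamma)$ by analyzing constant terms via the tower property (Lemma~\ref{l:tow}): the constant term along the Siegel parabolic $P_2$ is governed by $\theta^0(\tilde\tau)|_{\mathrm{GSO}(2,0)}=\mu_1\oplus\mu_1^{-1}$, and its $\gamma$-integral vanishes \emph{precisely because} of the standing hypothesis $\check\gamma\neq\mu_1^{\pm1}$, i.e.\ $\phi\neq\check\gamma\boxplus\check\gamma^{-1}$. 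The remaining constant-term contribution $\mathcal F_1-\mathcal F_0$ is shown to be bounded separately. Only after this does Proposition~\ref{P:ee} apply. So the hypothesis $\phi\neq\check\gamma\boxplus\check\gamma^{-1}$ enters twice: once for discreteness of $\mathcal M_\psi(\check\gamma)$ (which you noted) and once for square-integrability (which you did not).
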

 
Though there is a gap between the main theorem (Theorem \ref{T:main}) and the above proposition, the following results bridge it immediately:
\begin{itemize}
\item Proposition 4.5 of \cite{gan2016gross}, which gives the theta correspondence of unitary groups over $p$-adic fields for almost equal rank cases i.e. the cases that the difference of dimensions of underling spaces of two unitary groups is one.
\item Theorem 3.4 of \cite{paul2000howe}, which gives the theta correspondence of unitary groups over $\rr$ for almost equal rank cases. 
\item Theoreme 1 of \cite{minguez2008correspondance}, which gives the theta correspondence of general linear groups over $p$-adic fields.
\item III.9 (resp. Proposition 2.2) of \cite{moeglin1989correspondance} (resp. \cite{howe2comp}), which gives the theta correspondence of general linear groups over $\rr$ (resp. $\cc$). 
\end{itemize}

\subsection{Square integrability}
Firstly, we show the square-integrability of $\mathcal M_{I_2(\tau)}(\gamma)$.

Let $\mathcal F=\theta^2_{\varphi}(f)\circ j|_{\uuuu(\aaf)}\in I_2(\tau) \ (f\in\tilde\tau, \varphi\in\mathcal S(\aaf^{6}))$.
Let $P_0=M_0U_0=B_4, P_1=M_1U_1,$ and $P_2=M_2U_2$ be the parabolic subgroups containing $B_4$ such that the Levi subgroup $M_i$ of $P_i$ is isomorphic to $\res\gll_{1}\times\res\gll_{1}, \ \res\gll_{2},$ and $\gll_{1}\times \uu,$ respectively.
For $i=0,1,2$, let $pr_i$ be the projection of $P_i$ on $M_i$.
We note that the parabolic subgroup $Q_i=L_iN_i$ $(i=1,2)$ of $\go$ containing $B_2^{GO}$ such that Levi subgroup $L_i$ of $Q_i$ is isomorphic to $\gll_{3-i}\times{\rm GO}(4-i,2-i)$ corresponds to $P_i$ via $j$.

 We define the constant term $\mathcal F_i$ of $\mathcal F$ along $U_i$ by 
$$\mathcal F_i(g)=\int_{U_i(F)\backslash U_i(\aaf)}\mathcal F(ug)du, \ g\in \uuuu(\aaf)$$ and put
$$s\mathcal F=\mathcal F-\mathcal F_2-\mathcal F_1+\mathcal F_0.$$
Then,
$$\mathcal F=s\mathcal F+(\mathcal F_1-\mathcal F_0)+\mathcal F_2.$$
We fix a Siegel set $S_4$ of $\uuuu(\aaf)$ (see \cite[I.2.1]{moeglin_waldspurger_1995}) and regard $s\mathcal F,$ $\mathcal F_1-\mathcal F_0,$ and $\mathcal F_2$ as functions on $S_4$.
Then $s\mathcal F$ is bounded (see \cite[I.2.12]{moeglin_waldspurger_1995}), so all we have to do is is to evaluate the contribution of  $\mathcal F_1-\mathcal F_0$ and $\mathcal F_2$
.
\begin{lemm}
We have
$\int_{\bu(1)(F)\backslash\bu(1)(\aaf)}\mathcal F_2(i(\alpha,h))\overline{\gamma(\alpha)}d\alpha=0$ for $h\in \bu(3)(\aaf)$.
\end{lemm}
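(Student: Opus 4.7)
The plan is to use the tower property for theta lifts (Lemma \ref{l:tow}) to reduce the constant term $\mathcal F_2$ to an explicit expression involving the smaller theta lift $\theta^1(\tilde\tau)$ on $\mathrm{GO}(3,1)(\aaf)$, and then to show that, via the embedding $i$, the $\bu(1)$-variable only contributes through a character different from $\gamma$.

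First, I would apply Lemma \ref{l:tow} with $m=2$ and $m'=1$ to the parabolic $Q_2$ of $\mathrm{GO}(4,2)$ which corresponds under $j$ to $P_2\subset\uuuu$. This gives
\[
(\theta^2_\varphi(f))_{N_2}(ul)=|\det l_1|\,|\nu(l_0)|^{-1/2}\int_{\aaf}\theta^1_{\varphi_R}(f)(l_0)\,dR
\]
for $l=\mathrm{diag}(l_1,l_0,*)\in L_2(\aaf)$, so $\mathcal F_2(g)=(\theta^2_\varphi(f))_{N_2}(j(g))$ is controlled by the smaller theta lift $\theta^1(\tilde\tau)$ on $\mathrm{GO}(3,1)(\aaf)$ together with the modular character of $Q_2$.

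Second, using the explicit matrix $\delta$ and the formulas for $j$ recorded in \S\ref{her-isom}, I would compute the Iwasawa decomposition of $j(i(\alpha,h))$ with respect to $Q_2$. A direct matrix calculation should yield a factorization $j(i(\alpha,h))=u(\alpha,h)\cdot l(\alpha,h)$ with $u(\alpha,h)\in N_2(\aaf)$ and $l(\alpha,h)=\mathrm{diag}(l_1(\alpha,h),l_0(\alpha,h),*)\in L_2(\aaf)$ such that the entire $\alpha$-dependence factors through a single character $\chi''$ of $\bu(1)(\aaf)$. Combining with the tower formula, this yields a product decomposition
\[
\mathcal F_2(i(\alpha,h))=\chi''(\alpha)\cdot G_{\varphi,f}(h),
\]
where $\chi''$ is determined intrinsically by the Ikeda-lift data (in particular by $\chi$, the central character of $\theta^1(\tilde\tau)$, and the modular character of $Q_2$), and is independent of $\gamma$.

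Finally, since $\bu(1)(F)\backslash\bu(1)(\aaf)$ is compact, orthogonality of characters gives
\[
\int_{\bu(1)(F)\backslash\bu(1)(\aaf)}\chi''(\alpha)\overline{\gamma(\alpha)}\,d\alpha=0
\]
unless $\chi''=\gamma$. Under the standing hypothesis $\phi\neq\check\gamma\boxplus\check\gamma^{-1}$ from the main theorem, one then verifies $\chi''\neq\gamma$: the identification of $\chi''$ (through the central character of $\theta^1(\tilde\tau)$, which is governed by $\phi$ via the see-saw/theta correspondence) forces the equality $\chi''=\gamma$ to produce exactly the forbidden relation $\phi=\check\gamma\boxplus\check\gamma^{-1}$. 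The main obstacle is the explicit Iwasawa decomposition of $j(i(\alpha,h))$ in step two, which is a delicate matrix computation involving $\delta$-conjugation, $j$, and the normalizations coming from the modular character and from the central character of $\theta^1(\tilde\tau)$; once that is done, the orthogonality argument is automatic.
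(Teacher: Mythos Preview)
Your high-level plan (tower property followed by character orthogonality) is exactly the paper's, but the execution has a genuine error in the choice of parabolic that propagates through the rest.

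You apply Lemma~\ref{l:tow} with $m'=1$, reducing to $\theta^1$ on $\mathrm{GO}(3,1)$. The correct choice is $m'=2$: under $j$ the Klingen parabolic $P_2\subset\uuuu$ (Levi $\res\gll_1\times\uu$) corresponds to the parabolic of $\go$ with Levi $\gll_2\times\mathrm{GO}(2,0)$, as one reads off from the third displayed formula in \S\ref{her-isom}. (The paper's text description of the $L_i$ has a typo in the $\gll$-index, but the proof makes the correct identification clear.) Concretely, after absorbing $h$ into $\varphi$ and setting $h=1$, the paper computes $j(i(\alpha,1))$ explicitly and finds it already lies in the block Levi $\gll_2\times\mathrm{GSO}(2,0)$, so the tower property gives
\[
\mathcal F_2(i(\alpha,1))=\int_{\aaf^2}\theta^0_{\varphi_R}(f)(\beta)\,dR,
\]
with $\beta\in\aae^\times$ chosen so that $\beta/\beta^c=\alpha$. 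No Iwasawa decomposition of $j(i(\alpha,h))$ is needed.

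This correction also changes the endgame. Your claim that the $\alpha$-dependence factors through a \emph{single} character $\chi''$, identified via the central character of $\theta^1(\tilde\tau)$, is not the right mechanism. With $\theta^0$ in hand, either $\theta^0(\tilde\tau)=0$ and one is done, or $\phi$ is of type~2 and $\theta^0(\tilde\tau)|_{\mathrm{GSO}(2,0)}=\mu_1\oplus\mu_1^{-1}$ as characters of $E^\times$; thus $\mathcal F_2(i(\alpha,1))$ is a sum of two terms transforming by $\mu_1^{\pm1}(\beta)$. Since $\gamma(\alpha)=\check\gamma(\beta)$ and the standing hypothesis $\phi\neq\check\gamma\boxplus\check\gamma^{-1}$ means precisely $\check\gamma\neq\mu_1^{\pm1}$, both pieces integrate to zero by orthogonality.
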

\begin{proof}
Since the contribution of $h$ can be absorbed into $\varphi$, we can assume $h=1$.
For $\alpha\in\bu(1)(\aaf)$, 
$$\delta\begin{pmatrix}
 \alpha&\\
 &1
 \end{pmatrix}\delta^{-1}=\begin{pmatrix}
 1&&&\\
 &(1+\alpha)/2&-\kappa(1-\alpha)/4&\\
 &-(1-\alpha)/\kappa&(1+\alpha)/2&\\
 &&&1
 \end{pmatrix}$$
so the above integral is well-defined.
Furthermore, for $\beta\in\aae\baz$ such that $\beta/\beta^c=\alpha$,
$$j(i(\alpha,1))=
 \begin{pmatrix}
 a&-db/2&&&\\
 -2b&a&&&\\
 &&\beta&&\\
 &&&a&db/2\\
 &&&2b&a
 \end{pmatrix}.$$
 Here, we identify $\beta=a+b\kappa\in \aae\baz$ and 
 $\left(
\begin{smallmatrix}
 a&db \\
 b&a 
\end{smallmatrix}
\right)\in{\rm GSO}(2,0)(\aaf)$.
Then, by Lemma \ref{l:tow}, we have
\begin{align*}\mathcal F_2\left(i(\alpha,1)\right)&=\theta^2_{\varphi}(f)_{N_2}(j(i(\alpha,1)))\\
&=\int_{ \aaf^2}\theta^0_{\varphi_R}(f)(\beta)dR.
\end{align*}

 If $\theta^0(\tilde \tau)=0$, this lemma holds.
We assume $\theta^0(\tilde \tau)\neq0$.
 Then, it is easy to verify that $\phi$ is of type 2 and $\theta^0(\tilde \tau)|_{{\rm GSO}(2,0)(\aaf)}=\mu_1\oplus\mu_1^{-1}$ by the see-saw in Lemma 4.4. 
Since $\gamma(\alpha)=\check\gamma(\beta)$ and we assumed $\check \gamma\neq\mu_1^{\pm1}$, so this lemma holds in general.
\end{proof}

\begin{lemm}
$\mathcal F_1-\mathcal F_0$ is bounded.
\end{lemm}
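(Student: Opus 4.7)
\emph{Plan.} The plan is to identify $\mathcal F_1 - \mathcal F_0$ with the sum of non-constant Fourier coefficients of $\mathcal F_1$ along $N_{M_1}$ (the unipotent radical of the Borel of $M_1 \simeq \res\gll_2$), and then to invoke the standard rapid-decay estimate for such Fourier coefficients on Siegel sets.

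Since $U_1 \subset U_0$ and $U_0 = N_{M_1}\cdot U_1$ with $N_{M_1} = B_4 \cap M_1$, iterating the constant-term integral gives
$$\mathcal F_0(g) = \int_{N_{M_1}(F)\backslash N_{M_1}(\aaf)} \mathcal F_1(ng)\, dn,$$
so $\mathcal F_0$ is precisely the constant term of $\mathcal F_1$ along $N_{M_1}$, i.e.\ the zeroth $N_{M_1}$-Fourier coefficient of $\mathcal F_1$. Being left $P_1(F)$-invariant, $\mathcal F_1$ restricts to an automorphic form on $M_1(F)\backslash M_1(\aaf) \simeq \gll_2(E)\backslash \gll_2(\aae)$ of moderate growth, and $\mathcal F_1 - \mathcal F_0$ is the sum of its non-constant Fourier coefficients. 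By classical Whittaker-coefficient estimates on $\gll_2$ over the number field $E$, these are rapidly decreasing on any Siegel set of $\gll_2(\aae)$. Since $\mathcal F_1 - \mathcal F_0$ is left $U_1(\aaf)$-invariant and the projection of $S_4$ to $M_1(\aaf)$ lies in such a Siegel set, boundedness of $\mathcal F_1 - \mathcal F_0$ on all of $S_4$ follows.

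The main obstacle is to verify that $\mathcal F_1|_{M_1}$ is indeed an automorphic form of moderate growth on $\gll_2(E)\backslash\gll_2(\aae)$. This is supplied by Lemma \ref{l:tow}, which expresses $\mathcal F_1$ (after transport through $j$) as a partial theta integral over $\aaf^{m'}$ of a lower-rank theta lift $\theta^{2-m'}_{\varphi_R}(f)$ of the cusp form $f$; theta lifts of cusp forms are of moderate growth, and the additional integration against a compactly supported Schwartz function in the $R$-variable preserves this property. The remaining bookkeeping is to pass from the parabolic $P_1 \subset \uuuu$ to the corresponding parabolic $Q_1 \subset \go$ through $j$, which is handled by the explicit formulae recorded in \S\ref{her-isom}.
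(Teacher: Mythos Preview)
Your approach is essentially the paper's: both identify $\mathcal F_1-\mathcal F_0$ with the non-constant part of $\mathcal F_1$ viewed on the Levi $M_1\simeq\res\gll_2$, invoke Lemma \ref{l:tow} to realize this as (an integral of) the cuspidal part $s\theta^1_{\varphi_R}(f)$ of a lower-rank theta lift on $\gll_2(\aae)$, and then appeal to the boundedness of such non-constant parts on Siegel sets. The paper makes the $K_4$-finiteness reduction explicit (writing $\mathcal F(gk)=\sum_j c_j(k)\theta^2_{\varphi_j}(f)(j(g))$ and reducing the $R$-integral to a finite sum), whereas you leave this step implicit; also note the slip $N_{M_1}=B_4\cap M_1$ --- you mean its unipotent radical.
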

\begin{proof}
%

Let $K_4$ be a maximal compact subgroup of $\uuuu(\aaf)$ such that $B_4(\aaf)K_4=\bu(2,2)(\aaf)$.
Because of the finiteness of $\varphi$ for some maximal compact subgroup of $\go(\aaf)$,
there is a finite subset $\{\varphi_j\}_j$ of $\mathcal S(\aaf^{6})$ and a finite set $\{c_j\}_j$ of bounded function of $K_{4}$, we have
$$\mathcal F (gk)=\sum_j c_j(k)\theta^2_{\varphi_j}(f)(j(g))$$
for $g\in \uuuu(\aaf)$ and $k\in K_{4}$.
Then, by Lemma \ref{l:tow} and the formulae in \S\ref{her-isom}, we have
$$(\mathcal F_1-\mathcal F_0)(ulk)=\sum_j c_j(k)\int_{\aaf^2}s\theta^1_{\varphi_{jR}}(f)(j\left(\left(
\begin{smallmatrix}
\alpha&*\\
&\beta
\end{smallmatrix}\right)\right))dR,$$
for $u\in U_2(\aaf)$, $k\in K_{4}$, and
$$ \ l=
\begin{pmatrix}
\alpha&*&&\\
&\beta&&\\
&&(\beta^c)^{-1}&*\\
&&&(\alpha^c)^{-1}
\end{pmatrix}
\in P_0\cap M_1(\aaf)  \ (\alpha,\beta\in\aae\baz),$$ where $sf'$ is the constant term of an automorphic form $f'$ of $\gll_2(\aae)$ along some Borel subgroup of $\res\gll_2$.
We note that the integral of right hand side of above equation is finite sum, namely there are finite constants $\{a_i\}_i$ and finite subset $\{R_i\}_i$ of $\aaf^2$ such that
$$\int_{\aaf^2}s\theta^1_{\varphi_{jR}}(f)(j\left(\left(
\begin{smallmatrix}
\alpha&*\\
&\beta
\end{smallmatrix}\right)\right))dR=\sum_ia_i s\theta^1_{\varphi_{jR_i}}(f)(j\left(\left(
\begin{smallmatrix}
\alpha&*\\
&\beta
\end{smallmatrix}\right)\right)).$$
Thus $\mathcal F_1-\mathcal F_0$ is bounded.
\end{proof}
By above lemmas, $\mathcal M_{\mathcal F}(\gamma)$ is bounded.
Thus the following holds by Proposition \ref{P:ee}.
\begin{cor}\label{C:sq}
$\mathcal M_{I_2(\tau)}(\gamma)\subset L^2_{\mathcal M_{\psi}(\check\gamma)}.$
\end{cor}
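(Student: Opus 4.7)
The plan is to assemble the two preceding lemmas, together with the standard Siegel-set bound on $s\mathcal F$, into a pointwise bound for $\mathcal M_{\mathcal F}(\gamma)$, and then to invoke Proposition \ref{P:ee}. Writing
$$\mathcal M_{\mathcal F}(\gamma)(h) = \int_{\bu(1)(F)\backslash \bu(1)(\aaf)} \mathcal F(i(\alpha,h))\,\overline{\gamma(\alpha)}\,d\alpha, \qquad h \in \bu(3)(\aaf),$$
I would substitute the decomposition $\mathcal F = s\mathcal F + (\mathcal F_1 - \mathcal F_0) + \mathcal F_2$ introduced at the start of this subsection. The $\mathcal F_2$-contribution to the $\alpha$-integral vanishes identically in $h$ by the first lemma, while $s\mathcal F$ is bounded on the Siegel set $S_4$ by \cite[I.2.12]{moeglin_waldspurger_1995} and $\mathcal F_1 - \mathcal F_0$ is bounded on $S_4$ by the second lemma.

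Since $\bu(1)$ is anisotropic over $F$, the quotient $\bu(1)(F)\backslash\bu(1)(\aaf)$ is compact; fix a compact fundamental domain $\Omega$. For $\alpha \in \Omega$ and $h$ ranging over a Siegel set $S_3$ of $\bu(3)(\aaf)$, the embedded point $i(\alpha,h)$ stays inside a bounded translate of $S_4$, so the previous two bounds transfer to give a uniform estimate for the integrand on $\Omega \times S_3$. Integrating against $\overline{\gamma(\alpha)}\,d\alpha$ over the compact set $\Omega$ shows that $\mathcal M_{\mathcal F}(\gamma)$ is bounded on $S_3$. Since $\bu(3)(F)\backslash\bu(3)(\aaf)$ has finite volume (the center being anisotropic, no central-character reduction is required), bounded automorphic forms are square-integrable, so $\mathcal M_{\mathcal F}(\gamma) \in L^2(\bu(3)(F)\backslash\bu(3)(\aaf))$, and hence $\mathcal M_{I_2(\tau)}(\gamma) \subset L^2(\bu(3)(F)\backslash\bu(3)(\aaf))$.

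Finally, the hypothesis $\phi \neq \check\gamma \boxplus \check\gamma^{-1}$ is exactly what makes $\mathcal M_\psi(\check\gamma) = \phi \boxplus \check\gamma^{-1}$ a discrete A-parameter of $\bu(3)$, so the hypotheses of Proposition \ref{P:ee} are in force and it upgrades the $L^2$-statement to the desired containment $\mathcal M_{I_2(\tau)}(\gamma) \subset L^2_{\mathcal M_\psi(\check\gamma)}$. The genuine work has already been absorbed into the two preceding lemmas (the vanishing of the $\mathcal F_2$ term via the Jacquet-module computation using Lemma \ref{l:tow} and the see-saw, and the boundedness of $\mathcal F_1 - \mathcal F_0$ via the same tower-property formula); no real obstacle remains beyond the bookkeeping that transfers a Siegel-set bound through the embedding $i$.
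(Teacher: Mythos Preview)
Your proposal is correct and follows essentially the same route as the paper: the paper's proof is simply ``By above lemmas, $\mathcal M_{\mathcal F}(\gamma)$ is bounded. Thus the following holds by Proposition \ref{P:ee},'' and your argument spells out exactly these two steps with the additional bookkeeping (compactness of $\bu(1)(F)\backslash\bu(1)(\aaf)$, transferring the Siegel-set bound through $i$, finite volume of $\bu(3)(F)\backslash\bu(3)(\aaf)$) that the paper leaves implicit.
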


\subsection{See-Saw}
Secondly, we  describe $\mathcal M_{I_2(\tau)}(\gamma)$ in terms of global theta lifts.

We embed $\guuu$ into $\gso$ induced the following identification
\begin{align*}F^{6}&\simeq E^3\\
 \begin{pmatrix}
 x\\
 y\\
 a\\
 b\\
 z\\
 w
 \end{pmatrix}
 &\mapsto
  \begin{pmatrix}
 x/\kappa-y/2\\
 a+b\kappa\\
 -z-w\kappa/2
 \end{pmatrix}.
\end{align*}
Furthermore, we put
\begin{align*}e:\mathrm{GU}(3)   &\inj  \mathrm{GU}(2,2)\\
g&\mapsto i(\det g/\nu(g),g).
\end{align*}
Then, we have the following lemma by easy computation.
\begin{lemm}\label{surp}
The following diagram commutes:
$$
\xymatrix{
   \mathrm {GSO}(4,2)   \ar@{<-^{)}}[d] \ar@{=}[r]  &   \mathrm {GSO}(4,2) \ar@{->>}[r] &   \mathrm {PGSO}(4,2)  \ar@{->}[d]^{j}_\simeq   \\
   \mathrm{GU}(3)   \ar@{^{(}->}[r]^{e} &  \mathrm{GU}(2,2) \ar@{->>}[r]    &  \mathrm{PGU}(2,2) .\\
}
$$
\end{lemm}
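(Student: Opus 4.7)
The plan is to verify the diagram on generators of $\mathrm{GU}(3)$, making use of the explicit formulas for $j$ recorded in \S\ref{her-isom}. Since $\mathrm{GU}(3)$ is generated by its Levi factor $\res\gll_1\times\res\gll_2$ (diagonal plus the upper $\gll_2$-block inside the Siegel Levi), by the unipotent radical $N_3$ of the Borel $B_3'$, and by a Weyl element exchanging the two isotropic lines, it suffices to compute $j(\overline{e(g)})$ for $g$ running over each of these generators and to compare with the action of $g$ on $F^6$ through the prescribed identification $F^6\simeq E^3$.

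First I would observe that the map $e$ is designed precisely so that $e(g)\in\uuuu$ has similitude factor $\nu(e(g))=\nu(g)\cdot N_{E/F}(\det g/\nu(g))$, which is adjusted so that when we pass modulo the center of $\guuuu$ the resulting element in ${\rm PGU}(2,2)$ depends only on $g\in\guuu$. Using the formulas listed in \S\ref{her-isom} for $j$ on diagonal and block-diagonal elements, I would compute $j(\overline{e(g)})$ for $g={\rm diag}(\alpha,\beta,(\alpha^c)^{-1}\beta)$ (with $\alpha\in\res\gll_1$ and $\beta\in\res\gll_1$) and for the block $\left(\begin{smallmatrix}\det A/\nu&\\&A\end{smallmatrix}\right)$ with $A\in\res\gll_2$, reading off the resulting $6\times 6$ matrix. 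On the other hand, the direct image of $g$ in $\gso$ via $F^6\simeq E^3$ is just the matrix obtained by conjugating $g$ by the base change $F^6\to E^3$ and reading the result in the basis $S_2$ is built from. The fact that the identification is $(x,y,a,b,z,w)^t\mapsto(x/\kappa-y/2,\,a+b\kappa,\,-z-w\kappa/2)^t$ ensures that the quadratic form matches $S_2$ up to an overall scalar, so one may compare the two resulting $6\times 6$ matrices directly.

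The main content of the verification is bookkeeping: one checks that on each generator the two $6\times 6$ matrices differ by a scalar in $F^\times$, i.e.\ by an element of the center of $\gso$. The ``$\det g/\nu(g)$'' twist appearing in $e$ is exactly what is needed for this to hold, since that twist rescales the similitude factor to compensate for the fact that the six-dimensional space $F^6$ is the same whether one views it through $\mathrm{GU}(3)$ or through $i(\det g/\nu(g),g)\in\uuuu\subset\guuuu$. The hardest (i.e.\ most tedious) step is handling the Weyl element: one must write out the action of the long Weyl element of $\mathrm{GU}(3)$ both on $E^3$ (and hence on $F^6$) and after applying $e$ and $j$; but since the formulas for $j$ on the monomial generators of $\guuuu$ are already tabulated, this reduces to a finite linear-algebra check. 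Once the comparison is carried out on generators, multiplicativity extends the equality to all of $\guuu$, establishing commutativity of the diagram.
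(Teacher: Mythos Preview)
Your approach—verifying commutativity by direct computation on generators, using the explicit formulas for $j$ from \S\ref{her-isom}—is exactly what the paper means by ``easy computation'', and the strategy is sound. A few details in your outline need correcting, however. First, $\mathrm{GU}(3)$ has semisimple $F$-rank one, so it has no standard Levi of the form $\res\gll_1\times\res\gll_2$; the Borel Levi is the maximal torus $\simeq\res\gll_1\times\mathrm{GU}(1)$, and that is the only proper standard Levi. Second, $e(g)$ lands in $\mathrm{GU}(2,2)$, not in $\bu(2,2)$. Third, the similitude factor is simply $\nu(e(g))=\nu(g)$: one has $N_{E/F}(\det g/\nu(g))=\nu(g)^3/\nu(g)^2=\nu(g)$, so the block $\mathrm{diag}(\det g/\nu(g),\,g)$ is already a similitude with factor $\nu(g)$, not the product you wrote. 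With these fixes the plan goes through: check on the torus, the root subgroups of $N_3$, and a Weyl representative, unwind conjugation by $\delta$, and compare with the direct action of $g\in\mathrm{GU}(3)$ on $F^6\simeq E^3$; for the diagonal and block-diagonal pieces the second and third displayed formulas in \S\ref{her-isom} apply directly.
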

Let $\sigma$ be an irreducible subrepresentation of $L^2_{\mathcal M_{\psi}(\check\gamma)}$.
We note that the central character of $\sigma$ is $\gamma^{-1}$ and $\sigma$ is cuspidal. 
\begin{prop}\label{long}
$\sigma\subset\mathcal M_{I_2(\tau)}(\gamma)\Leftrightarrow\theta^{\chi\check\gamma^{-1}}(\sigma\otimes\gamma\circ\det)=\tau$
\end{prop}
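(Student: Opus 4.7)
By Theorem \ref{T: her}, every $\mathcal F\in I_2(\tau)$ has the form $\mathcal F=\theta^2_\varphi(f_0)\circ j|_{\uuuu(\aaf)}$ for some $\varphi\in\mathcal S(\aaf^6)$ and some $f_0\in\tilde\tau$, and by Corollary \ref{C:sq} we already know $\mathcal M_{I_2(\tau)}(\gamma)\subset L^2_{\mathcal M_\psi(\check\gamma)}$. Since that space is multiplicity-free (Corollary \ref{c:mf}), the inclusion $\sigma\subset \mathcal M_{I_2(\tau)}(\gamma)$ is equivalent to the non-vanishing, for some such $\mathcal F$ and some cusp form $\phi$ realizing $\sigma$, of the Petersson pairing
$$P(\mathcal F,\phi)=\int_{\uuu(F)\backslash\uuu(\aaf)}\int_{\un(F)\backslash\un(\aaf)} \theta^2_\varphi(f_0)(j(i(\alpha,h)))\,\overline{\gamma(\alpha)\phi(h)}\,d\alpha\,dh.$$
The plan is to convert $P(\mathcal F,\phi)$ into a see-saw integral and invoke the global see-saw identity of \S\ref{SS: tl}.

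First I would rewrite $i(\alpha,h)=i(\alpha(\det h)^{-1},1)\cdot e(h)$, which factors through the commuting product of $e(\guuu(\aaf))$ and its centralizer in $\guuuu(\aaf)$; the latter is precisely the $\gll_{2,\aaf}^+$-factor of the see-saw dual pair $(\gll_{2,\aaf}^+,\guuu(\aaf))$ sitting inside the larger pair $(\gll_{2,\aaf}^+,\go(\aaf))$ via Lemma \ref{surp}. Performing the substitution $\alpha\mapsto\alpha\det h$ and extending the $\uuu(F)\backslash\uuu(\aaf)$-integration to $\guuu(F)\aaf^\times\backslash\guuu(\aaf)$ via $\guuu(\aaf)=\aae^\times\cdot\uuu(\aaf)$, the pairing becomes, up to a nonzero scalar $c$,
$$P(\mathcal F,\phi)=c\int_{\guuu(F)\aaf^\times\backslash\guuu(\aaf)} \theta^2_\varphi(f_0)(j(e(h)))\,\overline{\tilde\phi(h)}\,dh,$$
where $\tilde\phi:=\phi\otimes(\gamma\circ\det)$ is the cuspidal extension of $\phi$ to $\guuu(\aaf)$ with central character $\gamma^2$. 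Setting $\rho:=\chi\check\gamma^{-1}$ (which satisfies $\rho|_{\aaf^\times}=\ep$), the compatibility $\omega^{(1,\rho)}|_{R_1\cap R'_1}=\omega^2|_{R_1\cap R'_1}$ identifies $\theta^2_\varphi(f_0)(j(e(h)))$ with $\theta^\rho_\varphi(f_0)(h)$, and the global see-saw identity of \S\ref{SS: tl} then reduces the pairing to
$$P(\mathcal F,\phi)=c\int_{\gll_{2,F}^+\aaf^\times\backslash\gll_{2,\aaf}^+} f_0(g)\,\overline{\theta^\rho_\varphi(\tilde\phi)(g)}\,dg.$$

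Finally, because both $\tilde\tau$ and $\theta^\rho(\tilde\phi)$ are square integrable modulo center, non-vanishing of this last pairing over some choice of $\varphi$, $f_0\in\tilde\tau$ and $\phi\in\sigma$ is equivalent to $\tilde\tau\subset\theta^\rho(\sigma\otimes\gamma\circ\det)$, i.e.\ to $\theta^\rho(\sigma\otimes\gamma\circ\det)=\tilde\tau$ as representations of $\gll_{2,\aaf}^+$. Via the tensor decomposition $\Theta^\rho(\sigma')\simeq\chi_{\sigma'}^*\rho\boxtimes\Theta^\rho(\sigma'_0)$ and the definition $\tilde\tau=(\tau\boxtimes\chi)|_{\gll_{2,\aaf}^+}$, this is in turn equivalent to $\theta^{\chi\check\gamma^{-1}}(\sigma\otimes\gamma\circ\det)=\tau$ on $\uu(\aaf)$, as required. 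The main obstacle lies in the first step: carefully tracking the central-character twist produced by the substitution $\alpha\mapsto\alpha\det h$, verifying that $j(i(\beta,1))$ really acts on the Weil representation as the predicted element of $\gll_{2,\aaf}^+$ (so that the see-saw identity applies verbatim), and ensuring that the extension $\phi\leadsto\tilde\phi$ absorbs the stray factor $\overline{\gamma(\det h)}$ correctly.
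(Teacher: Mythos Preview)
Your strategy is the same as the paper's: both pass through the global see-saw identity for the pair $(\gll_2^+,\guuu)$ inside $(\guu^+,\go)$ to identify the Miyawaki pairing $\langle\mathcal M_{\mathcal F}(\gamma),f\rangle_{\uuu}$ with the theta pairing $\langle\theta^{\chi\check\gamma^{-1}}_\varphi(f\cdot\gamma\circ\det),f'\rangle_{\uu}$. The paper merely runs the computation from the $\uu$-side toward the $\uuu$-side rather than the reverse.

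Two points in your write-up need repair, however. First, the assertion that the centralizer of $e(\guuu)$ in $\guuuu$ ``is precisely the $\gll_{2,\aaf}^+$-factor of the see-saw dual pair'' is false: $\gll_2^+$ lives on the \emph{symplectic} side of the Weil representation and is not a subgroup of $\guuuu\simeq\mathrm{PGSO}(4,2)$ at all. What actually makes your substitution work is that $j(i(\alpha,1))\cdot j(e(h))=j(e(\beta h))$ in $\mathrm{PGSO}(4,2)$ whenever $\alpha=\beta/\beta^c$, so the $\bu(1)\times\bu(3)$-integral collapses to a single integral over $\guuu(F)\aaf^\times\backslash\guuu(\aaf)$ via Hilbert~90; this is exactly the step the paper carries out in the reverse direction. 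Second, $\tilde\phi=\phi\otimes(\gamma\circ\det)$ is not defined on $\guuu(\aaf)$, since $\gamma$ is only a character of $\bu(1)(\aaf)$, and ``central character $\gamma^2$'' does not make sense for a representation of $\guuu(\aaf)$. The paper resolves this by choosing an automorphic extension $\Gamma$ of $\gamma$ to $\aae^\times$, extending $f\in\sigma$ to $\guuu(\aaf)$ by the central character $\Gamma^{-1}$, and then twisting by $\Gamma(\det/\nu)$; this is precisely the ``obstacle'' you flag at the end, and it is not optional bookkeeping but the mechanism that produces a well-defined cusp form on $\guuu(\aaf)$ suitable for the see-saw.
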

\begin{proof}
Let $\Gamma$ be an automorphic character of $\aae\baz$ such that $\Gamma|_{\bu(1)(\aaf)}=\gamma$ and we extend $f\in\sigma$ on $\guuu(\aaf)=\uuu(\aaf)\aae\baz$ by
$$f(\alpha g)=\Gamma(\alpha)^{-1}f(g)$$
for $\alpha\in\aae\baz$ and $g\in\uuu(\aaf)$.
 and let $\Sigma=\sigma\boxtimes\Gamma^{-1}$, an extension of $\sigma$ on $\guuu(\aaf)$. 
Similarly, we identify $f'\in\tau$ with the extension of it on $\guu_{\aaf}^{+}$ by $\chi$.
We note that the map $f'\mapsto f'|_{\gll_{2,\aaf}^+}$ is the natural bijection between $\tau$ and $\tilde\tau.$
We take $f\in \sigma$, $\varphi\in\mathcal S(\aae^3)$, $f'\in\tau$ and we put $\mathcal F=\theta^2_{\varphi}(f'|_{\gll_{2,\aaf}^+})\circ j|_{\uuuu(\aaf)}$.

Firstly we have
\begin{align*}
\int_{\uu(F)\backslash\uu(\aaf)}\theta^{\chi\check\gamma^{-1}}_{\varphi}(f\cdot\gamma(\det))(g)\overline {f'(g)}dg
&=\int_{\guu_F^{+}\aaf\baz\backslash\guu_{\aaf}^{+}}\theta^{\chi\check\gamma^{-1}}_{\varphi}(f\cdot\Gamma(\det/\nu))(g)\overline {f'(g)}dg\\
&=\int_{\gll_{2,F}^{+}\aaf\baz\backslash\gll_{2,\aaf}^{+}}\theta^{\chi\check\gamma^{-1}}_{\varphi}(f\cdot\Gamma(\det/\nu))(g)\overline {f'(g)}dg.
\end{align*}
By the following see-saw
$$
\xymatrix{
   \mathrm {GO}(4,2)   \ar@{-}[d]   & &   \guu^+ \ar@{-}[d]     \\
   \mathrm{GU}(3)   \ar@{-}[urr]     & &   \gll_2^+,\ar@{-}[ull]   \\
}
$$
we have
\begin{align*}
\int_{\gll_{2,F}^{+}\aaf\baz\backslash\gll_{2,\aaf}^{+}}\theta^{\chi\check\gamma^{-1}}_{\varphi}(f\cdot\Gamma(\det/\nu))(g)\overline {f'(g)}dg
=&\int_{\guuu(F)\aaf\baz\backslash\guuu(\aaf)}\theta^{2}_{\varphi}(f'|_{\gll_{2,\aaf}^+})(h)\overline {f(h)\Gamma(\det h/\nu(h))}dh.
\end{align*}
Decomposing the variable $h$ of the above integral into the following product $h=\beta h_0$ ($h_0\in\uuuu(\aaf),$ $\beta\in \aae\baz$)
we have
\begin{align*}
&\int_{\guuu(F)\aaf\baz\backslash\guuu(\aaf)}\theta^{2}_{\varphi}(f'|_{\gll_{2,\aaf}^+})(h)\overline {f(h)\Gamma(\det h/\nu(h))}dh\\
=&\int_{\bu(3)(F)\backslash\uuu(\aaf)}\int_{ E\baz\aaf\baz\backslash\aae\baz}\mathcal F(i(\beta (\beta^c)^{-1} \det h_0,h_0))\overline {f(h_0)\gamma(\beta (\beta^c)^{-1} \det h_0)}d\beta dh_0
\end{align*}
by Lemma \ref{surp}.
Finally, putting $\alpha=\beta (\beta^c)^{-1}\in \bu(1)(\aaf)$, we have
\begin{align*}
&\int_{\bu(3)(F)\backslash\uuu(\aaf)}\int_{ E\baz\aaf\baz\backslash\aae\baz}\mathcal F(i(\beta (\beta^c)^{-1} \det h_0,h_0))\overline {f(h_0)\gamma(\beta (\beta^c)^{-1} \det h_0)}d\beta dh_0\\
=&\int_{\bu(3)(F)\backslash\uuu(\aaf)}\int_{ \bu(1)(F)\backslash\bu(1)(\aaf)}\mathcal F(i(\alpha \det h_0,h_0))\overline {f(h_0)\gamma(\alpha  \det h_0)}d\alpha dh_0\\
=&\int_{\bu(3)(F)\backslash\uuu(\aaf)}\int_{ \bu(1)(F)\backslash\bu(1)(\aaf)}\mathcal F(i(\alpha,h_0))\overline {\gamma(\alpha)f(h_0)}d\alpha dh_0\\
=&\int_{\bu(3)(F)\backslash\uuu(\aaf)}\mathcal M_ {\mathcal F}(h_0)\overline {f(h_0)}d\alpha dh_0,
\end{align*}
so
$$\int_{\uu(F)\backslash\uu(\aaf)}\theta^{\chi\check\gamma^{-1}}_{\varphi}(f\cdot\gamma(\det))(g)\overline {f'(g)}dg
=\int_{\bu(3)(F)\backslash\uuu(\aaf)}\mathcal M_ {\mathcal F}(h_0)\overline {f(h_0)}d\alpha dh_0$$
holds.
The equation means that
$$\sigma\subset\mathcal M_{I_2(\tau)}(\gamma)\Leftrightarrow\theta^{\chi\check\gamma^{-1}}(\sigma\otimes\gamma\circ\det)\supset\tau.$$
Since $\theta^{\chi\check\gamma^{-1}}(\sigma\otimes\gamma\circ\det)$ is irreducible or zero, this proposition holds.
\end{proof}
\subsection{Reduction to local theta lifts}
Lastly, we reduce the nonvanishing of global theta lifts to local theta lifts by using Yamana's result\cite{yamana2014functions}.

\begin{prop}\label{P:thet}
For an irreducible subrepresentation $\sigma'$ of $L^2_{\mathcal M_{\psi}(\check\gamma)\otimes\check\gamma\circ\det}$, if
$\theta^{\chi_v\check\gamma_v^{-1}}(\sigma'_v)\neq0$ for all $v$ then $\theta^{\chi\check\gamma^{-1}}(\sigma')\neq0$.
\end{prop}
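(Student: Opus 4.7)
The plan is to invoke the main theorem of Yamana \cite{yamana2014functions}, which for unitary group theta correspondences supplies a Rallis-type inner product formula and, as a corollary, a complete local-to-global nonvanishing criterion. Applied in our setting, this reduces the nonvanishing of $\theta^{\chi\check\gamma^{-1}}(\sigma')$ to the conjunction of (i) local nonvanishing of $\theta^{\chi_v\check\gamma_v^{-1}}(\sigma'_v)$ at every place $v$, and (ii) nonvanishing of a specific standard $L$-function attached to $\sigma'$ twisted by $\chi\check\gamma^{-1}$ at a critical point lying on the edge of the critical strip. Our hypothesis supplies (i), so everything is reduced to establishing (ii).

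To verify (ii) I would compute the relevant $L$-function directly from the A-parameter. The representation $\sigma'$ has A-parameter $\mathcal M_\psi(\check\gamma)\otimes\check\gamma = (\phi\boxplus\check\gamma^{-1})\otimes\check\gamma$, which decomposes as the direct sum of a two-dimensional piece (a twist of the cuspidal base change $\phi$) and a one-dimensional abelian piece. The standard $L$-function of $\sigma'$ twisted by $\chi\check\gamma^{-1}$ therefore factorizes into a $\gll_2(\aae)$-type Rankin--Selberg $L$-function built from $\phi$ and a Hecke $L$-function for the abelian summand. Both factors are nonvanishing on $\Re s = 1$ by the classical results of Jacquet--Shalika and of Hecke respectively, so the product is nonzero at the critical point prescribed by Yamana's formula.

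Before applying \cite{yamana2014functions} one must confirm the technical hypotheses; the essential one is cuspidality of $\sigma'$. This follows from the standing hypothesis $\phi\neq\check\gamma\boxplus\check\gamma^{-1}$, which ensures $\mathcal M_\psi(\check\gamma)\in\Psi_2(\bu(3))$, together with the fact that this A-parameter is tempered (no $\sll_2(\cc)$-factor of dimension greater than one appears), so that every element of $L^2_{\mathcal M_\psi(\check\gamma)\otimes\check\gamma\circ\det}$ is cuspidal. The principal remaining obstacle is bookkeeping: precisely identifying the $L$-function and evaluation point appearing in Yamana's inner product formula for the rank configuration $\bu(3)\times\uu$, with our normalizations of similitudes, splitting character $\chi\check\gamma^{-1}$, and Witt tower data, with the factorized standard $L$-function computed above. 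Once this identification is carried out, (ii) holds and Yamana's criterion yields the proposition.
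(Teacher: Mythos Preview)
Your overall strategy---invoke Yamana's criterion and verify the $L$-function condition via the factorization coming from the A-parameter---is exactly the paper's approach. However, you have misidentified the $L$-function condition that Yamana's theorem requires in this range.

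We are lifting from $\bu(3)$ to $\uu$, so the target has \emph{smaller} dimension than the source. In this configuration Yamana's criterion (\cite[Theorem~10.1]{yamana2014functions}) does not ask for nonvanishing of the doubling $L$-function at the relevant point; it asks that $L(\sigma',s)$ have a \emph{pole} at $s=0$ (and no pole at $s\in\frac{1}{2}\zz_{<0}$). Via the functional equation this becomes: $L(\sigma'^\vee,s)$ has a pole at $s=1$ and is holomorphic for $s\in 1+\frac{1}{2}\zz_{>0}$. Your condition (ii) is therefore wrong as stated, and your verification---``both factors are nonvanishing on $\Re s=1$''---does not establish what is needed.

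The correct analysis runs as follows. The one-dimensional summand of the A-parameter, after the twists are unwound, contributes exactly the Dedekind zeta function $\zeta_E(s)$, which supplies the required simple pole at $s=1$. The two-dimensional summand contributes $L^{GJ}(\phi\otimes\check\gamma^{-1}\circ\det,s)$; here Jacquet--Shalika \cite{jacquet1976non} is invoked not merely for nonvanishing on the line $\Re s=1$, but to ensure this factor is holomorphic and \emph{nonzero at $s=1$} (using the hypothesis $\phi\neq\check\gamma\boxplus\check\gamma^{-1}$), so that it does not kill the pole from $\zeta_E$. One must also check that the local correction factors at bad places neither introduce nor cancel poles in the relevant half-integer range, which is a routine estimate. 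Your proposal contains the right ingredients but assembles them toward the wrong target; once you replace ``nonvanishing at the critical point'' by ``pole at $s=0$, no pole to the left,'' and recognize that the abelian factor is precisely $\zeta_E$, the argument goes through.
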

\begin{proof}
By \cite[Theorem 10.1]{yamana2014functions}, when $L(\sigma',s)$ has a pole at $s=0$ and has no pole at $s\in \frac{1}{2}\zz_{<0}$, where $L(\sigma'',s)$ is the doubling L-function of $\sigma''$.
Since the functional equation
$$L(\sigma',s)=\epsilon(\sigma',s)L(\sigma'^\vee,1-s)$$
holds, what we need to show is that $L(\sigma'^\vee,s)$ has a pole at $s=1$ and has no pole at $s\in 1+\frac{1}{2}\zz_{>0}$.
Let $S_{\rm fin}$ (resp. $S_\infty$) be the set of finite (resp. infinite) places $v$ of $F$ where $E_v\neq F_v\oplus F_v$ and $\sigma'_v$ is not unramified.
Then, we have
\begin{align*}
L(\sigma'^\vee,s)=L^{GJ}(\phi\otimes\check\gamma^{-1}\circ \det,s)\zeta_E(s)\prod_{v\in S_{\rm fin}}\frac {L(\sigma'^\vee_v,s)(1-q_{E_v}^{-s})}{L^{GJ}(\phi_v\otimes\check\gamma_v^{-1},s)}\prod_{v\in S_\infty}\frac {L(\sigma'^\vee_v,s)}{L^{GJ}(\phi_v\otimes\check\gamma_v^{-1},s)2(2\pi)^{-s}\Gamma(s)},
\end{align*}
where $L^{GJ}(\psi',s)$ is the Godement-Jaquet L-function of automorphic representation $\psi'$ of a general linear group, $\zeta_E(s)$ is the complete Dedekind Zeta function of $E$, and $q_{E_v}$ is the cardinality of the residue field of $E_v$.
It is known that
\begin{itemize}
\item $L^{GJ}(\phi\otimes\check\gamma^{-1}\circ \det,s)$ is holomorphic on ${\rm Re}(s)>1$ and nonzero at $s=1$ (\cite{jacquet1976non}) since $\phi\neq\check\gamma\boxplus\check\gamma^{-1}$,
\item $\zeta_E(s)$ is holomorphic on ${\rm Re}(s)>1$ and has simple pole at $s=1$,
\item for $v\in S_{\rm fin}\sqcup S_\infty$, $L(\sigma'^\vee_v,s)$ is holomorphic on ${\rm Re}(s)>1$ and nonzero at $s=1$ since it is meromorphic function with no zeros,
\item  for $v\in S_{\rm fin}$, $L^{GJ}(\phi_v\otimes\check\gamma_v^{-1},s)$ has no zero on ${\rm Re}(s)>1$ and no pole at $s=1$ since it is a finite product of some factors $(1- q_{E_v}^{t-s})^{-1}$ where $|{\rm Re}t|\leq1/2$,
\item  for $v\in S_{\infty}$, $L^{GJ}(\phi_v\otimes\check\gamma_v^{-1},s)$ has no zero on ${\rm Re}(s)>1$ and no pole at $s=1$ since it is a finite product of some factors $2(2\pi)^{-(s+\lambda+k/2)}\Gamma(s+\lambda+k/2)$ where $|{\rm Re}\lambda|\leq 1/2$ and $k\in\zz_{\geq0}$.
\end{itemize}
Therefore $L(\sigma'^\vee,s)$ has a pole at $s=1$ and has no pole at $s\in 1+\frac{1}{2}\zz_{>0}$.
\end{proof}
By  Corollary \ref{C:sq} and Proposition \ref{P:thet}, Proposition \ref{P:pre} holds.
As mentioned at the beginning of the section, we have finished proving Theorem \ref{T:main}.
  \bibliography{aaaaa}

\providecommand{\bysame}{\leavevmode\hbox to3em{\hrulefill}\thinspace}
\providecommand{\MR}{\relax\ifhmode\unskip\space\fi MR }
\providecommand{\MRhref}[2]{%
  \href{http://www.ams.org/mathscinet-getitem?mr=#1}{#2}
}
\providecommand{\href}[2]{#2}
\begin{thebibliography}{KMSW14}

\bibitem[AB95]{howe2comp}
Jeffrey Adams and Dan Barbasch, \emph{Dual pair correspondence for complex
  groups}, Journal of functional analysis \textbf{132} (1995), 1--42.

\bibitem[AG17]{atobe2017local}
Hiraku Atobe and Wee~Teck Gan, \emph{Local theta correspondence of tempered
  representations and {L}anglands parameters}, Inventiones mathematicae
  \textbf{210} (2017), no.~2, 341--415.

\bibitem[AK18]{atobe2018miyawaki}
Hiraku Atobe and Hisashi Kojima, \emph{On the {M}iyawaki lifts of hermitian
  modular forms}, Journal of Number Theory \textbf{185} (2018), 281--318.

\bibitem[Art11]{arem}
James Arthur, \emph{The embedded eigenvalue problem for classical groups}, On
  Certain L-Functions, American Mathematical Society, 2011, pp.~19--31.

\bibitem[Art13]{arthur2011endoscopic}
\bysame, \emph{The endoscopic classification of representations: orthogonal and
  symplectic groups}, vol.~61, American Mathematical Society, 2013.

\bibitem[Ato15]{AtoH:2015}
Hiraku Atobe, \emph{Pullbacks of {H}ermitian {M}aass lifts}, Journal of Number
  Theory \textbf{153} (2015).

\bibitem[Ato17a]{atobe2017uniqueness}
\bysame, \emph{On the uniqueness of generic representations in an-packet},
  International Mathematics Research Notices \textbf{2017} (2017), no.~23,
  7051--7068.

\bibitem[Ato17b]{attarticle}
\bysame, \emph{A theory of {M}iyawaki liftings: The {H}ilbert-{S}iegel case},
  preprint, 2017, https://arxiv.org/abs/1712.03624.

\bibitem[GG09]{gan2009restrictions}
Wee~Teck Gan and Nadya Gurevich, \emph{Restrictions of {S}aito-{K}urokawa
  representations (with an appendix by {G}. {S}avin)}, Contemporary Mathematics
  \textbf{488} (2009), 95--124.

\bibitem[GGP12]{gan2012symplectic}
Wee~Teck Gan, Benedict~H Gross, and Dipendra Prasad, \emph{Symplectic local
  root numbers, central critical {L}-values, and restriction problems in the
  representation theory of classical groups}, Ast{\'e}risque \textbf{346}
  (2012), 1--109.

\bibitem[GI16]{gan2016gross}
Wee~Teck Gan and Atsushi Ichino, \emph{The {G}ross--{P}rasad conjecture and
  local theta correspondence}, Inventiones mathematicae \textbf{206} (2016),
  no.~3, 705--799.

\bibitem[GT16a]{gt1}
Wee~Teck Gan and Shuichiro Takeda, \emph{On the {H}owe duality conjecture in
  classical theta correspondence}, Contemporary Mathematics \textbf{664}
  (2016), 115--117.

\bibitem[GT16b]{gan2016proof}
Wee~Teck Gan and Shuichiro Takeda, \emph{A proof of the {H}owe duality
  conjecture}, Journal of the American Mathematical Society \textbf{29} (2016),
  no.~2, 473--493.

\bibitem[How89]{howe1989transcending}
Roger Howe, \emph{Transcending classical invariant theory}, Journal of the
  American Mathematical Society \textbf{2} (1989), no.~3, 535--552.

\bibitem[Ike01]{ikeike}
Tamotsu Ikeda, \emph{On the lifting of elliptic cusp forms to {S}iegel cusp
  forms of degree 2n}, Annals of Mathematics \textbf{154} (2001), no.~3,
  641--681.

\bibitem[Ike06]{ikeda2006pullback}
Tamotsu Ikeda, \emph{Pullback of the lifting of elliptic cusp forms and
  {M}iyawaki's conjecture}, Duke Mathematical Journal \textbf{131} (2006),
  469--497.

\bibitem[Ike08]{ikeda2008lifting}
\bysame, \emph{On the lifting of {H}ermitian modular forms}, Compositio
  Mathematica \textbf{144} (2008), no.~5, 1107--1154.

\bibitem[IY15]{ikeda2017lifting}
Tamotsu Ikeda and Shunske Yamana, \emph{On the lifting of hilbert cusp forms to
  {H}ilbert-{S}iegel cusp forms}, preprint, 2015,
  https://arxiv.org/abs/1512.08878.

\bibitem[JS76]{jacquet1976non}
Herv{\'e} Jacquet and Joseph~A Shalika, \emph{A non-vanishing theorem for zeta
  functions of {$GL_n$}}, Inventiones mathematicae \textbf{38} (1976), 1--16.

\bibitem[KMSW14]{kltarticle}
Tasho Kaletha, Alberto M{\'\i}nguez, Sug~Woo Shin, and Paul-James White,
  \emph{Endoscopic classification of representations: inner forms of unitary
  groups}, preprint, 2014, https://arxiv.org/abs/1409.3731.

\bibitem[Koj82]{kojima1982arithmetic}
Hisashi Kojima, \emph{An arithmetic of {H}ermitian modular forms of degree
  two}, Inventiones mathematicae \textbf{69} (1982), no.~2, 217--227.

\bibitem[KS97]{ksdeg}
Stephen~S. Kudla and W.~Jay Sweet, \emph{Degenerate principal series
  representations for {U(n,n)}}, Israel Journal of Mathematics \textbf{98}
  (1997), 253--306.

\bibitem[Kud86]{kudla1986local}
Stephen~S Kudla, \emph{On the local theta-correspondence}, Inventiones
  mathematicae \textbf{83} (1986), no.~2, 229--255.

\bibitem[Kud94]{kudla1994splitting}
\bysame, \emph{Splitting metaplectic covers of dual reductive pairs}, Israel
  Journal of Mathematics \textbf{87} (1994), 361--401.

\bibitem[KV95]{knapp1995cohomological}
Anthony~W Knapp and David~A Vogan, \emph{Cohomological induction and unitary
  representations}, vol.~45, Princeton University Press, 1995.

\bibitem[KY16]{kim2016cusp}
Henry~H Kim and Takuya Yamauchi, \emph{Cusp forms on the exceptional group of
  type {E}$_7$}, Compositio Mathematica \textbf{152} (2016), no.~2, 223--254.

\bibitem[KY18]{kim2018miyawaki}
\bysame, \emph{A {M}iyawaki type lift for {GS}pin(2, 10)}, Mathematische
  Zeitschrift \textbf{288} (2018), no.~1-2, 415--437.

\bibitem[M{\'\i}n08]{minguez2008correspondance}
Alberto M{\'\i}nguez, \emph{Correspondance de {H}owe explicite: paires duales
  de type {II}}, Annales scientifiques de l'{\'E}cole Normale Sup{\'e}rieure
  \textbf{41} (2008), no.~5, 717--741.

\bibitem[M{\oe}g89]{moeglin1989correspondance}
Colette M{\oe}glin, \emph{Correspondance de {H}owe pour les paires reductives
  duales: quelques calculs dans le cas archim{\'e}dien}, Journal of functional
  analysis \textbf{85} (1989), 1--85.

\bibitem[Mok15]{mok2015endoscopic}
Chung~Pang Mok, \emph{Endoscopic classification of representations of
  quasi-split unitary groups}, vol. 235(1108), American Mathematical Society,
  2015.

\bibitem[Mor14]{morimoto2014theta}
Kazuki Morimoto, \emph{On the theta correspondence for {(GSp(4), GSO(4,2))} and
  {S}halika periods}, Representation Theory of the American Mathematical
  Society \textbf{18} (2014), 28--87.

\bibitem[MR19]{MR2019}
Colette M{\oe}glin and David Renard, \emph{Sur les paquets {d'Arthur} des
  groupes unitaires et quelques conséquences pour les groupes classiques},
  Pacific Jurnal of Mathematics \textbf{299} (2019), no.~1, 53--88.

\bibitem[MW95]{moeglin_waldspurger_1995}
Colette M{\oe}glin and Jean-Loup Waldspurger, \emph{Spectral decomposition and
  {E}isenstein series: A paraphrase of the scriptures}, Cambridge Tracts in
  Mathematics, Cambridge University Press, 1995.

\bibitem[Pau00]{paul2000howe}
Annegret Paul, \emph{{H}owe correspondence for real unitary groups {II}},
  Proceedings of the American Mathematical Society \textbf{128} (2000), no.~10,
  3129--3136.

\bibitem[Pau05]{PAUL2005270}
Annegret Paul, \emph{On the {H}owe correspondence for symplectic–orthogonal
  dual pairs}, Journal of Functional Analysis \textbf{228} (2005), no.~2, 270
  -- 310.

\bibitem[Ral84]{rallis1984howe}
Stephen Rallis, \emph{On the {H}owe duality conjecture}, Compositio Math
  \textbf{51} (1984), no.~3, 333--399.

\bibitem[Rob96]{roberts1996theta}
Brooks Roberts, \emph{The theta correspondence for similitudes}, Israel Journal
  of Mathematics \textbf{94} (1996), 285--317.

\bibitem[SZ15]{sun2015conservation}
Binyong Sun and Chen-Bo Zhu, \emph{Conservation relations for local theta
  correspondence}, Journal of the American Mathematical Society \textbf{28}
  (2015), no.~4, 939--983.

\bibitem[Wal90]{wahawe}
Jean-Loup Waldspurger, \emph{D\'{e}monstration d'une conjecture de dualit\'{e}
  de {H}owe dans le cas p-adique,p neq 2}, Festschrift in honor of {I}.{I}.
  {P}iatetski-{S}hapiro : on the occasion of his sixtieth birthday, Part I,
  Israel mathematical conference proceedings, Jerusalem : Weizmann Science
  Press of Israel, 1990, pp.~267--324.

\bibitem[Xu17]{xu_2017}
Bin Xu, \emph{On mœglin's parametrization of arthur packets for p-adic
  quasisplit sp (n) and so(n)}, Canadian Journal of Mathematics \textbf{69}
  (2017), no.~4, 890–960.

\bibitem[Yam14]{yamana2014functions}
Shunsuke Yamana, \emph{L-functions and theta correspondence for classical
  groups}, Inventiones mathematicae \textbf{196} (2014), no.~3, 651--732.

\bibitem[Yam19]{yayayaya}
\bysame, \emph{On the lifting of {H}ilbert cusp forms to {H}ilbert-{H}ermitian
  cusp forms}, preprint, 2019, http://syamana.sub.jp/Hermitian.pdf.

\end{thebibliography}
\bibliographystyle{amsalpha} 

\end{document}